\newtheorem{theorem}{Theorem}[section]
\newtheorem{definition}[theorem]{Definition}
\newtheorem{lemma}[theorem]{Lemma}
\newtheorem{claim}{Claim}[theorem]
\newtheorem{corollary}[theorem]{Corollary}
\newtheorem{conjecture}[theorem]{Conjecture}
\newcommand{\phiset}[3]{\phi_{{#1}\rightarrow{#2}}({#3})}
\newcommand{\W}[2]{W_{#1}^{#2}}
\newcommand{\C}{\mathcal{C}}
\newcommand{\cC}{\mathcal{C}}
\newcommand{\bZ}{\mathbb{Z}}
\newcommand{\del}{\setminus}
\begin{document}
\sloppy
	
\title{The grid theorem for vertex-minors}

\author{Jim Geelen}
\address{Department of Combinatorics and Optimization, University of Waterloo}
\author{O-Joung Kwon}
\address{\begin{tabular}[t]{@{}l@{}}
Department of Mathematics, Incheon National University \\ Discrete Mathematics Group, Institute for Basic Science (IBS)
\end{tabular}}
\author{Rose McCarty}
\address{Department of Combinatorics and Optimization, University of Waterloo}
\author{Paul Wollan}
\address{Department of Computer Science, University of Rome, ``La Sapienza''}

\thanks{This research was partially supported by a grant from the
Office of Naval Research [N00014-10-1-0851] and NSERC [203110-2016]. The second author was supported by the National Research Foundation of Korea (NRF) grant funded by the Ministry of Education (No. NRF-2018R1D1A1B07050294), and by the Institute for Basic Science (IBS-R029-C1).}

\subjclass{05C70}
\keywords{Rank-width, clique-width, vertex-minors}
\date{\today}

\begin{abstract}
We prove that, for each circle graph $H$, every graph with sufficiently large rank-width contains a vertex-minor isomorphic to $H$.
\end{abstract}

\maketitle

\section{Introduction}
\label{intro}
We prove the following result.

\begin{theorem}
\label{main}
For each circle graph $H$, there is an integer $r(H)$ so that every graph with no vertex-minor isomorphic to $H$ has rank-width at most $r(H)$.
\end{theorem}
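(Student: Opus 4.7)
My strategy is to reduce this theorem to the grid theorem for binary matroids of Geelen, Gerards, and Whittle, exploiting the classical correspondence between binary matroids and bipartite graphs via fundamental graphs. Recall that every bipartite graph $G'$ is the fundamental graph of some binary matroid $M$ with respect to a chosen basis; pivoting an edge of $G'$ corresponds to a change of basis in $M$; pivot-minors of $G'$ correspond to minors of $M$; and the rank-width of $G'$ equals the branch-width of $M$. The crux of the argument will be extending this dictionary to accommodate local complementation at arbitrary vertices, so that vertex-minors of an arbitrary graph $G$ can be controlled by matroid minors of an associated binary matroid.

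First, I would reduce to the bipartite setting. From a graph $G$ of very large rank-width, I would construct an associated bipartite graph $G'$ (for instance via a bipartite-double or edge-subdivision construction) with two features: (i) the rank-width of $G'$ remains large, and (ii) every pivot-minor of $G'$ of a designated ``controlled'' form yields, after a prescribed sequence of local complementations, a vertex-minor of $G$ of the corresponding combinatorial type. Granting such a construction, it suffices to prove a bipartite version of the theorem: every bipartite graph of sufficiently large rank-width contains a suitably large ``universal'' circle graph as a pivot-minor.

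Next, I would apply the matroid grid theorem to the binary matroid $M$ corresponding to $G'$. Because $M$ has large branch-width, it must contain as a minor one of a short list of unavoidable structures (projective geometries, affine geometries, and large ``grid-like'' or spike-like matroids over $\mathrm{GF}(2)$). Each such matroid minor translates, via fundamental graphs, to a pivot-minor of $G'$ of highly structured form. The hypothesis that $H$ is a circle graph enters essentially at this point: since circle graphs are closed under taking vertex-minors (Bouchet), to recover $H$ it suffices to produce \emph{any} sufficiently large circle graph as a vertex-minor, and the appropriate grid-like outcome of the matroid grid theorem should provide a natural family of universal circle graphs that contain every fixed $H$.

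The principal obstacle is that vertex-minors are strictly more general than pivot-minors---local complementation at an arbitrary vertex has no direct matroid analogue---so the bipartite-encoding construction in the first step must be engineered very carefully to make the translation work. A secondary difficulty is handling the projective- and affine-geometry outcomes of the matroid grid theorem: these structures are very far from being circle graphs and so cannot serve as our target on their own, meaning one must either exclude them by exploiting the special form of $G'$, or show that their fundamental graphs nevertheless contain the desired circle-graph vertex-minor. Pinpointing which unavoidable matroid structures actually force every circle graph as a vertex-minor, and translating the resulting pivot-minor back through the bipartite-encoding, is where I expect the bulk of the technical work to lie.
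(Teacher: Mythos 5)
Your proposed reduction runs directly into an open problem. The step you describe as the ``crux'' --- constructing from an arbitrary graph $G$ of large rank-width a bipartite graph $G'$ of large rank-width such that controlled pivot-minors of $G'$ recover vertex-minors of $G$ --- is not a known construction, and in spirit it would prove Oum's conjecture (Conjecture~\ref{conj:pivot} in this paper): that excluding a bipartite circle graph as a \emph{pivot}-minor bounds rank-width. The paper explicitly notes that this conjecture, if true, would imply Theorem~\ref{main}, but it is open except in special cases (bipartite graphs, where it does reduce to the matroid grid theorem of Geelen--Gerards--Whittle; line graphs; and circle graphs). There is no bipartite-double or subdivision trick known that simultaneously preserves large rank-width and lets you simulate local complementation at an arbitrary vertex by pivots in the auxiliary graph; local complementation at a vertex with both bipartition classes as neighbours changes the bipartite structure in a way that has no matroid analogue.

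What your outline is missing is precisely the paper's main new idea, the Disentangling Lemma (Lemma~\ref{shortPath}): given disjoint sets $S,T$ with $\kappa(S,T)$ large, there is a graph locally equivalent to $G$ that leaves $G[S\cup T]$ unchanged but contains a $k$-link for $(S,T)$. The authors emphasize that this lemma is specific to vertex-minors --- it fails for pivot-minors and has no analogue for minors --- which is exactly why the vertex-minor grid theorem can be proved while the pivot-minor version remains open. The actual proof then avoids matroids entirely: it reduces to comparability grids (every circle graph is a vertex-minor of one, via permutation graphs), uses a theorem of Kwon and Oum on scattered classes as a base case, builds ``constellations'' by repeated application of the Disentangling Lemma together with Ramsey-type arguments, and extracts the comparability grid from a large constellation whose underlying pattern is a star, path, or clique. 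Your secondary concern about projective/affine geometry outcomes is also real --- their fundamental graphs are far from circle graphs --- but it is moot because the first reduction already fails.
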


\noindent We define {\em circle graphs} in Section~\ref{sec:circleGraphs}, and we define {\em rank-width} and {\em vertex-minors}  in Section~2. 

For any fixed circle graph $H$, Theorem~\ref{main} gives a polynomial-time algorithm for testing for a vertex-minor isomorphic to $H$. Jeong, Kim, and Oum~\cite{JKO} provided an efficient algorithm that, for a given graph $G$, determines whether or not the rank-width of $G$ is at most $r(H)$, and, if the rank-with is at most $r(H)$, finds a rank-decomposition of width at most $r(H)$. By Theorem~\ref{main}, we may assume that the rank-width of $G$ is at most $r(H)$, as otherwise $G$ has a vertex-minor isomorphic to $H$. Then, using the rank-decomposition of width at most $r(H)$ for $G$, we can determine whether or not $G$ has a vertex-minor isomorphic to $H$ via dynamic programming~\cite{WeakSeeseConjPf}; for further details see the survey~\cite{RWSurvey}.
\begin{corollary}
\label{vm-testing}
For each circle graph $H$ there is a polynomial-time algorithm that tests, for any given graph $G$, whether or not $G$ contains a vertex-minor isomorphic to $H$.
\end{corollary}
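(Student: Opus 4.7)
The plan is essentially already outlined in the paragraph preceding the corollary; I would just expand it into a clean argument combining three ingredients.

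First, fix a circle graph $H$ and let $r=r(H)$ be the constant from Theorem~\ref{main}. Given an input graph $G$, I would invoke the algorithm of Jeong, Kim, and Oum~\cite{JKO}, which in polynomial time either (i) certifies that the rank-width of $G$ exceeds $r$, or (ii) returns a rank-decomposition of $G$ of width at most $r$. In case (i), Theorem~\ref{main} (the contrapositive) immediately guarantees that $G$ contains a vertex-minor isomorphic to $H$, so the algorithm outputs ``yes'' without any further work. This is the step where the main theorem of the paper does all the heavy lifting.

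Second, in case (ii) I would run the dynamic programming algorithm of Courcelle--Oum (as in \cite{WeakSeeseConjPf}) on the rank-decomposition of width at most $r$. Existence of a vertex-minor isomorphic to the fixed graph $H$ is expressible in $C_2\mathrm{MSO}_1$ logic (the property is a finite disjunction over the vertex subsets of $G$ of size $|V(H)|$ and over sequences of local complementations, but the cleaner route is to appeal directly to the fact that vertex-minor containment of a fixed graph is decidable on graphs of bounded rank-width in polynomial time, as cited in the survey~\cite{RWSurvey}). The running time is polynomial in $|V(G)|$ with the exponent depending only on $r$ and $|V(H)|$, hence only on $H$.

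Combining the two cases yields the polynomial-time algorithm. The only real obstacle is conceptual rather than technical: one needs to know that vertex-minor containment of a fixed graph is a property decidable by bounded-rank-width dynamic programming. This is a standard consequence of the algorithmic meta-theorem for rank-width, and it is the piece that requires citing prior work rather than reproving anything here. Everything else is a one-line invocation of Theorem~\ref{main} and of the rank-width approximation algorithm of~\cite{JKO}.
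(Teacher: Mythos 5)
Your proposal follows exactly the same three-step route as the paper: use the Jeong--Kim--Oum algorithm to either certify rank-width greater than $r(H)$ (whence Theorem~\ref{main} forces a ``yes'') or produce a width-$\le r(H)$ rank-decomposition, and in the latter case run the Courcelle--Oum dynamic programming of~\cite{WeakSeeseConjPf} to decide vertex-minor containment. This matches the paper's argument, including the appeals to~\cite{JKO}, \cite{WeakSeeseConjPf}, and~\cite{RWSurvey}.
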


Using a result of Dvo\v{r}\'ak and Kr\'al'~\cite{RWChiBounded}, Theorem~\ref{main} also implies that for every circle graph $H$, the class of graphs with no vertex-minor isomorphic to $H$ is \textit{$\chi$-bounded} (the chromatic number of each graph in the class is bounded by a function of its clique number). The first author conjectured that for every graph $H$, the class of graphs with no vertex-minor isomorphic to $H$ is $\chi$-bounded; Davies~\cite{davies20} very recently proved this conjecture. The next step, as proposed by Kim, Kwon, Oum and Sivaraman~\cite{Kim2020}, is to determine if, for every graph $H$, the class of graphs with no vertex-minor isomorphic to $H$ is \textit{polynomially $\chi$-bounded} (there exists a polynomial $\chi$-bounding function). Our result and a recent theorem of Bonamy and Pilipczuk~\cite{Bonamy19} imply that this is true when $H$ is a circle graph.

\newtheorem*{gridthm}{Grid Theorem}

Theorem~\ref{main} is analogous to the Grid Theorem of Robertson and Seymour~\cite{graphMinors5}, stated below.
\begin{gridthm}
For each planar graph $H$, there is an integer $t$ so that every graph with no minor isomorphic to $H$ has tree-width at most $t$.
\end{gridthm}

Since each planar graph is isomorphic to a minor of some grid, it suffices to prove the Grid Theorem when $H$ is itself a grid. For vertex-minors the role of grids is assumed by ``comparability grids".

For a positive integer $n$, the $n \times n$ \textit{comparability grid} is the graph with vertex set $\{(i,j): i,j \in \{1,2,\ldots, n\}\}$ where there is an edge between vertices $(i,j)$ and $(i',j')$ if either $i\leq i'$ and $j\leq j'$, or $i\geq i'$ and $j\geq j'$. Every circle graph is isomorphic to a vertex-minor of a comparability grid (see Lemma~\ref{lemma:compGrid}), so it suffices to prove Theorem~\ref{main} when $H$ is itself a
comparability grid. Thus Theorem~\ref{main} is equivalent to the following result.
\begin{restatable}{theorem}{mainG}
\label{mainGrid}
There is a function $f:\bZ\rightarrow\bZ$ so that for every positive integer $n$, every graph of rank-width at least $f(n)$ has a vertex-minor isomorphic to the $n \times n$ comparability grid.
\end{restatable}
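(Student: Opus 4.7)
The plan is to follow the tangle-based strategy of the Robertson--Seymour grid theorem, replacing order-of-separation by the cut-rank function $\rho_G$. By Oum's branch-width/tangle duality for submodular connectivity functions, a graph of rank-width at least $f(n)$ carries a \emph{rank-tangle} of order $k=k(n)$: a consistent orientation of every vertex bipartition of small cut-rank towards its ``big'' side. This converts the hypothesis of large rank-width into a positive local connectivity witness that can be manipulated step by step.

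From such a tangle I would extract a bipartite vertex-minor whose adjacency matrix realises an $n \times n$ triangular ``staircase'' pattern, corresponding to a chain of nested low-cut-rank separations $(A_1,B_1) \supsetneq \cdots \supsetneq (A_n,B_n)$ whose defining rows of the cut-rank matrix are linearly independent over $\mathrm{GF}(2)$. The extraction proceeds iteratively: at each stage, submodularity of $\rho_G$ together with the tangle orientation yields a new separation contributing a linearly independent row to the growing pattern, and vertex-minor operations (local complementation and pivot) clean the intermediate object while preserving a sub-tangle of only slightly reduced order. After $n$ iterations we obtain the staircase as a vertex-minor of $G$; a short explicit sequence of local complementations---analogous in spirit to Lemma~\ref{lemma:compGrid}---then transforms the staircase into the $n \times n$ comparability grid, and deleting the remaining vertices concludes the proof.

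The main obstacle is the extraction step. Unlike in the graph-minor setting there is no planar topology to lean on, and unlike in the matroidal grid theorem of Geelen--Gerards--Whittle there is no direct translation to a well-studied matroid class one could invoke as a black box. The essential technical lemma needed is a \emph{prune-and-refine} principle showing that the vertex-minor operations used to clean intermediate staircases preserve a rank-tangle of only slightly reduced order; without such a lemma, the iterative construction would lose control of its ambient connectivity after a bounded number of steps and collapse. Controlling this simultaneous extraction-and-cleaning against the tangle, together with finding the right intermediate canonical form so that the final ``staircase-to-comparability-grid'' reduction is elementary, will be the technical heart of the argument.
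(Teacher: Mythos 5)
Your outline correctly identifies the shape of the difficulty -- one must repeatedly ``clean up'' an intermediate object by vertex-minor operations without destroying the ambient connectivity that fuels the next extraction step -- but it does not supply the idea that resolves it, and the resolution in the paper looks quite different from what you propose. The paper never works with tangles. Instead it takes a vertex-minor-minimal graph $G$ of rank-width $\geq r$ and uses Oum's Lemma~\ref{lemma:mfConnected} to conclude $G$ is $(r, g_{\ref{lemma:mfConnected}})$-connected; all subsequent connectivity arguments are carried out at the level of $\kappa_G(S,T)$ for concrete disjoint sets $S,T$, not against a tangle. The crucial new tool is the Disentangling Lemma~\ref{shortPath}, and it is worth noting exactly what it gives: if $\kappa(S,T)$ is large but $\sqcap(S,T)$ is small, one can pass to a \emph{locally equivalent} graph $\tilde G$ that has a $k$-link for $(S,T)$ \emph{while keeping $\tilde G[S\cup T]=G[S\cup T]$ unchanged}. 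That preservation of the induced subgraph on $S\cup T$ -- not preservation of a tangle -- is what lets the inductive construction keep the structure already built. This lemma is proved by minimizing $|V(G)|$, using the three-ways-to-delete-a-vertex Theorem~\ref{lemma:waysToDelete}, and then a matroid intersection argument, none of which appears in your sketch. The lemma is also genuinely specific to local equivalence (the authors remark it fails for pivot-equivalence), so your tangle-preserving ``prune-and-refine'' framing, which is agnostic to which operations are used, misses the phenomenon actually exploited.

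The intermediate canonical form is also different from what you describe. Rather than a nested chain of low-cut-rank separations yielding a triangular staircase in a bipartite adjacency matrix, the paper builds \emph{constellations}: a coclique $H$ of centers, each attached to a large coclique $W_h$, with the pairs $(W_u,W_v)$ coupled by matchings or half-graphs following a connected auxiliary graph $K$. The base case of the induction is not free: it requires the Kwon--Oum scattered-classes theorem (Theorem~\ref{thm:SF}) to obtain many disjoint stars as a vertex-minor. Constellations are then grown one star at a time using the Disentangling Lemma plus ordered Ramsey theory (Lemmas~\ref{cpl}--\ref{bipRamsey}), and the final extraction splits into three cases depending on whether $K$ can be taken to be a star, a path, or a clique (Lemmas~\ref{stars}, \ref{paths}, \ref{cliques}), each requiring its own careful sequence of pivots and local complementations. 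Your claim that a ``short explicit sequence of local complementations --- analogous in spirit to Lemma~\ref{lemma:compGrid}'' converts a staircase to the comparability grid is unsubstantiated; Lemma~\ref{lemma:compGrid} goes the other way (it embeds circle graphs into comparability grids), and the actual grid extraction in Section~7 is considerably more delicate than a one-line reduction. As written, the proposal is an honest statement of the problem rather than a proof, and the missing piece you flag (``the essential technical lemma'') is precisely where the paper's main contribution lies.
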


Despite the resemblance, we see no way of directly proving the Grid Theorem from Theorem~\ref{main} or vice-versa. However, the following conjecture of Oum~\cite{lineGraphsGridThm} about pivot-minors (defined in Section~2), if true, would imply both results.
\begin{conjecture}
\label{conj:pivot}
For each bipartite circle graph $H$, there is an integer $r$ so that every graph with no pivot-minor isomorphic to $H$ has rank-width at most $r$.
\end{conjecture}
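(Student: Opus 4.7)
The plan is to mirror the reduction by which Theorem~\ref{main} was recast as Theorem~\ref{mainGrid}. First one would establish a pivot-minor analogue of Lemma~\ref{lemma:compGrid}: every bipartite circle graph is a pivot-minor of some bipartite comparability grid $B_n$ (say, the bipartite subgraph of the $n\times n$ comparability grid keeping only those edges between vertices $(i,j)$ and $(i',j')$ whose coordinate-parities $i+j$ and $i'+j'$ differ). Granting this, Conjecture~\ref{conj:pivot} reduces to showing that there is a function $g:\bZ\rightarrow\bZ$ such that every graph of rank-width at least $g(n)$ contains $B_n$ as a pivot-minor.

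To attack this reduced statement I would try to reuse as much of the proof of Theorem~\ref{mainGrid} as possible. That argument produces a comparability grid as a \emph{vertex}-minor, so the natural strategy is to locate, at each step of its proof, where a local complementation (as opposed to a pivot) is genuinely used, and then either (a) replace the local complementation by a pair of pivots at the cost of enlarging some auxiliary bipartite structure, or (b) ensure that the step only ever needs to be applied in a bipartite-like subregion of the host, where local complementation can be simulated by pivots. This is encouraged by the fact that pivoting on a bipartite graph preserves bipartiteness, so the target $B_n$ can be searched for inside a bipartite pivot-minor that is built up gradually from the host.

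A complementary angle is to pass to a binary matroid. For a bipartite graph $H$ with bipartition $(A,B)$, pivot-minors of $H$ correspond exactly to minors of the binary matroid represented by $[I_A \mid N(A,B)]$, and the rank-width of $H$ equals the branch-width of that matroid up to an additive constant. If one can first reduce Conjecture~\ref{conj:pivot} to the case of \emph{bipartite} host graphs, then the unavoidable-minor theorem for binary matroids of Geelen, Gerards and Whittle immediately supplies highly structured binary-matroid minors at large branch-width, and the remaining work becomes identifying $B_n$ inside them by further pivots and deletions.

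The principal obstacle, in either approach, is the passage from arbitrary host graphs to bipartite ones while respecting pivot-minor containment. Unlike in the vertex-minor setting, there is no routine construction that turns a general graph $G$ of large rank-width into a bipartite graph $\widetilde{G}$ whose pivot-minors control the pivot-minors of $G$: the obvious candidates (the bipartite double cover, subdivisions, attaching pendants) either destroy rank-width or fail to be bipartite when $G$ is not. Devising such a reduction, or else proving the $B_n$-pivot-minor statement directly for general hosts, is presumably where genuinely new ideas are required; this is consistent with the fact that Conjecture~\ref{conj:pivot} is still open even though it would imply both Theorem~\ref{main} and the Grid Theorem simultaneously.
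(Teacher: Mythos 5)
There is no proof in the paper to compare your proposal against: the statement you were given is Conjecture~\ref{conj:pivot}, which the paper explicitly attributes to Oum and presents as \emph{open}. The authors only observe (i) that it would imply both Theorem~\ref{main} and the Grid Theorem, (ii) that the bipartite-host case is known because it is equivalent to the grid theorem for binary matroids~\cite{GridThmRepresentable}, and (iii) that Oum proved it for line graphs and circle graphs. Your writeup is, appropriately, a survey of plausible lines of attack rather than a proof, and you correctly say so in your final sentence.

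A few remarks on the content of your sketch. Your matroid angle is consistent with what the paper records: for a bipartite graph the pivot-minor order does correspond to the minor order on the associated binary matroid, and rank-width and branch-width agree up to a constant, so the bipartite-host case indeed follows from Geelen--Gerards--Whittle; this is exactly the reduction the paper cites. Your first angle --- reproducing the machinery of Sections~5--7 with pivots in place of local complementations --- is where the real difficulty lies, and it is not merely a matter of bookkeeping: the Disentangling Lemma (Lemma~\ref{shortPath}) is flagged in the introduction as a statement that \emph{fails} if ``locally equivalent'' is replaced by ``equivalent up to pivoting'', so the central new tool of the paper does not transfer. Your specific candidate $B_n$ (the parity-bipartite subgraph of the comparability grid) is a reasonable guess, but the claim that every bipartite circle graph is a pivot-minor of some $B_n$ is itself unproved and would need its own argument; the paper's Lemma~\ref{lemma:compGrid} uses local complementations in an essential way (Lemma~\ref{vmSplittable}), so it does not give you the pivot-minor analogue for free. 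You have correctly located the principal obstacle (controlling pivot-minors of a non-bipartite host by a bipartite surrogate), and that is precisely why the conjecture remains open.
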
 
\noindent Oum's conjecture would imply Theorem~\ref{main} because every pivot-minor of a graph $G$ is also a vertex-minor of $G$ by definition, and, more importantly, because every circle graph is a vertex-minor of a bipartite circle graph~\cite[Corollary~53]{BrijderTraldi16}. The conjecture is known to hold for bipartite graphs, as that special case is equivalent to the grid theorem for binary matroids; see~\cite{GridThmRepresentable}. Oum~\cite{lineGraphsGridThm} also proved Conjecture~\ref{conj:pivot} for line graphs and circle graphs. It is natural to ask if something similar could hold for induced subgraphs, but this is unlikely; see~\cite{infinitelyManyCW}.

The main new tool in our proof of Theorem~\ref{main} is a ``disentangling lemma'', Lemma~\ref{shortPath}. This result is particular to vertex-minors; it does not extend to pivot-minors and there is no analogue for minors. We also rely on a recent theorem of Kwon and Oum~\cite{scattered} (stated in this paper as Theorem~\ref{thm:SF}) to serve as the base case for induction. 

\section{Preliminaries}
\label{sec:background}

All graphs in this paper are finite and simple; for a graph $G=(V,E)$ we consider $E$ as a set consisting of unordered pairs of vertices. The set of neighbours of a vertex $v$ in a graph $G$ is denoted by $N(v)$.

In this section we review some material on vertex-minors, pivot-minors, and rank-width; these results  are mostly due to Bouchet~\cite{graphicIsoSystems} and Oum~\cite{RWAndVM}.

\subsection*{Vertex-minors and pivot-minors}
For a vertex $v$ of a graph $G$, we write $G* v$ for the graph formed from $G$ by replacing the induced subgraph of $G$ on the set of neighbours of $v$ with its complement. We say that $G*v$ is obtained from $G$ by \textit{local complementation at $v$}. A graph $H$ is a \textit{vertex-minor} of $G$ if $H$ can be obtained from $G$ by a sequence of vertex deletions and local complementations. If $H$ can be obtained from $G$ by local complementations only, then we say that $H$ and $G$ are \textit{locally equivalent}. Note that,
if a graph $H$ is a vertex-minor of a graph $G$, then there exists a graph locally equivalent to $G$ that has $H$ as an induced subgraph.

For an edge $uv$ of a graph $G$, we write $G \times uv$ for the graph $G*u*v*u$. We say that $G\times uv$ is obtained from $G$ by \textit{pivoting on $uv$}. The graph $G \times uv$ is well-defined since $G*u*v*u = G*v*u*v$; see~\cite[Corollary~2.2]{RWAndVM}. A graph $H$ is a \textit{pivot-minor} of $G$ if $H$ can be obtained from $G$ by a sequence of vertex deletions and pivots. We use pivoting extensively and the following result explicitly describes the effects of the operation.

\begin{lemma}\cite[Proposition~2.1]{RWAndVM}
\label{lemma:pivot}
Let $uv$ be an edge of a graph $G$ and let $F$ denote the set of all unordered pairs $xy$ where $x$ and $y$ are in distinct parts of
$(N(u)\cap N(v),\, N(u)\setminus (N(v)\cup\{v\}),\, N(v)\setminus (N(u)\cup\{u\}))$. 
Then $G \times uv$ is the graph formed from $G$ by first replacing its edge set with the symmetric difference of $E(G)$ and $F$ and then switching the labels of the vertices $u$ and $v$.
\end{lemma}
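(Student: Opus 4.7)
My plan is to prove the lemma by performing the three local complementations $G * u$, $G * u * v$, $G * u * v * u$ explicitly and, for each unordered pair of vertices, counting the parity of the toggles that pair receives. Fix $A := N(u) \cap N(v)$, $B := N(u) \setminus (N(v) \cup \{v\})$, $C := N(v) \setminus (N(u) \cup \{u\})$, and $D := V(G) \setminus (A \cup B \cup C \cup \{u,v\})$. Then $\{u\}, \{v\}, A, B, C, D$ partition $V(G)$, no vertex of $D$ is adjacent to $u$ or $v$, and the set $F$ in the statement consists precisely of those pairs whose two endpoints lie in distinct members of $\{A,B,C\}$.

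The essential subcomputation is to identify the three toggled neighborhoods. The first operation $* u$ toggles the edges inside $N_G(u) = A \cup B \cup \{v\}$. To find $N_{G * u}(v)$, I use that $* u$ toggles $v$'s adjacencies with exactly the vertices of $(A \cup B \cup \{v\}) \setminus \{v\} = A \cup B$; starting from $N_G(v) = A \cup C \cup \{u\}$, this yields $N_{G * u}(v) = B \cup C \cup \{u\}$. Since $* v$ leaves $N(u)$ unchanged, an analogous argument gives $N_{G * u * v}(u) = A \cup C \cup \{v\}$, and this is the set whose internal edges are toggled by the final $* u$.

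The body of the proof is then a parity check with the three toggle sets $A \cup B \cup \{v\}$, $B \cup C \cup \{u\}$, $A \cup C \cup \{v\}$. For each of the six pair-types among $A, B, C$, a pair whose two endpoints lie in a common part of $\{A,B,C\}$ belongs to exactly two of the three toggle sets and is therefore unchanged, whereas a pair whose endpoints lie in two distinct parts belongs to exactly one toggle set and is therefore flipped. Summed over all such pairs, this is precisely the symmetric difference with $F$. Pairs with at least one endpoint in $D$ lie in none of the three toggle sets and are hence unchanged.

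Finally I handle the edges incident with $u$ or $v$. Since the last $* u$ does not alter $N(u)$, we have $N_{G * u * v * u}(u) = A \cup C \cup \{v\}$, and toggling $v$'s adjacencies within this set in $G * u * v$ gives $N_{G * u * v * u}(v) = A \cup B \cup \{u\}$. Swapping the labels $u$ and $v$ converts these to $A \cup C \cup \{u\} = N_G(v)$ and $A \cup B \cup \{v\} = N_G(u)$, which is consistent with the statement because $F$ contains no pair incident with either $u$ or $v$. The argument is essentially bookkeeping; the one place where one must be careful is tracking whether the vertex $u$ or $v$ itself belongs to each toggle set, which is why I list all three toggle sets explicitly before performing the parity count.
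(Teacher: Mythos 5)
The paper does not prove this lemma; it cites it directly as \cite[Proposition~2.1]{RWAndVM}, so there is no in-paper argument to compare against. Your direct verification by parity-counting over the three toggle sets $A\cup B\cup\{v\}$, $B\cup C\cup\{u\}$, $A\cup C\cup\{v\}$ is correct and is the standard way to prove it. Two small remarks. First, the sentence ``Since $*v$ leaves $N(u)$ unchanged, an analogous argument gives $N_{G*u*v}(u)=A\cup C\cup\{v\}$'' has the wrong justification: local complementation at $v$ does \emph{not} leave $N(u)$ unchanged (it toggles $u$'s adjacency to each vertex of $N(v)\setminus\{u\}$). What you should say is that the \emph{first} $*u$ leaves $N(u)$ unchanged, so $N_{G*u}(u)=N_G(u)=A\cup B\cup\{v\}$, and then running the same toggling argument for $*v$ (with toggle set $B\cup C$) turns this into $A\cup C\cup\{v\}$ — which is the answer you correctly wrote down, so the computation is fine, only the one-line justification is garbled. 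Second, when concluding via the label swap, it is worth noting explicitly that the swap also relabels $u$ and $v$ inside the neighbour sets, which is why $A\cup C\cup\{v\}$ becomes $A\cup C\cup\{u\}$ after the swap; you implicitly did this, and it is what makes the final identification with $N_G(u)$ and $N_G(v)$ come out right.
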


The following results show that there are two ways to remove a vertex with respect to pivot-minors and three ways with respect to vertex-minors; see~\cite{graphicIsoSystems}. 
\begin{lemma}
\label{lemma:pivot2}
Let $H$ be a pivot-minor of a graph $G$. If $v\in V(G)\setminus V(H)$, then either
\begin{itemize}
\item $H$ is a pivot-minor of $G-v$, or
\item for each $w\in N(v)$,  the graph $H$ is a pivot-minor of $(G\times vw)-v$.
\end{itemize}
\end{lemma}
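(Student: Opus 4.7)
The plan is to take any pivot-minor sequence $G = G_0, G_1, \ldots, G_k = H$, rearrange it so that the deletion of $v$ occurs as early as possible, and then invoke a pivot-equivalence identity to extend ``one $w$'' to ``every $w$''.

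The first step relies on two commutation facts derivable from Lemma~\ref{lemma:pivot}: (i) a pivot on an edge $e$ not incident to the current label $v$ commutes with deleting $v$, i.e., $(G'\times e)-v=(G'-v)\times e$, because neither the toggled set $F$ nor the label swap is disturbed by removing $v$; and (ii) deletions of distinct vertices commute. Using these repeatedly, one pushes the step $G_{i^*-1}\to G_{i^*-1}-v$ earlier past every preceding operation that does not involve $v$. If this pushes the deletion all the way to step~$1$, then $H$ is a pivot-minor of $G-v$ and we are in the first alternative.

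Otherwise, the first step one cannot commute past is a pivot on a labeled edge $\{v,u_1\}$. Applying the same commutations together with an induction on the ``distance to case~(a)'', one may further assume that every operation preceding the deletion of $v$ is a pivot on an edge incident to (the current label) $v$, the earliest such being $\times vu_1$ for some $u_1\in N_G(v)$. The rearranged sequence shows that $H$ is a pivot-minor of $(G\times vu_1)-v$. To convert this to the second alternative for \emph{every} $w\in N_G(v)$, I would use the identity
\[
(G\times vu)-v \;=\; \bigl((G\times vw)-v\bigr)\times uw,
\]
valid for distinct $u,w\in N_G(v)$, which shows that $(G\times vu)-v$ and $(G\times vw)-v$ are pivot-equivalent. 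That $uw$ is an edge of $(G\times vw)-v$ follows from Lemma~\ref{lemma:pivot}: the label swap of $\times vw$ maps $uw$ to $\{u,v\}\in E(G)$, which is not toggled since $v$ lies in none of the three parts defining $F$.

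The main obstacle will be verifying the identity. This reduces to an edge-by-edge comparison using Lemma~\ref{lemma:pivot}, where the cases split according to whether each candidate edge's endpoints lie in $\{u,w\}$ or outside, and in the latter case by their location in the partition $N_G(u)\cap N_G(w),\, N_G(u)\setminus N_G(w),\, N_G(w)\setminus N_G(u)$. A secondary subtlety is that non-$v$ pivots occurring earlier in the sequence may change the neighbours of $v$, so one must argue that the pivot-equivalence class of the ``minor at $v$'' is stable under such operations; this too ultimately reduces to the identity above.
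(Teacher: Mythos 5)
The paper does not prove Lemma~\ref{lemma:pivot2} itself; it cites Bouchet~\cite{graphicIsoSystems} and records only the key underlying fact, namely the identity $(G\times vw_2)-v = ((G\times vw_1)-v)\times w_1w_2$ for $w_1,w_2\in N_G(v)$. You identify exactly this identity and also correctly establish the two easy commutation facts (a pivot on an edge disjoint from $v$ commutes with deleting $v$, and distinct deletions commute). So the ingredients you use match the paper's sketch.

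However, the reduction step is not justified and contains a real gap. Your commutation facts only let you move the deletion of $v$ earlier past operations not involving $v$; they say nothing about commuting a pivot $\times xy$ with $v\notin\{x,y\}$ past a pivot $\times vu$. Such pivots do \emph{not} commute in general (the sets $F$ interact, and $xy$ may not even remain an edge after $\times vu$, or $vu$ after $\times xy$). Consequently the assertion that ``one may further assume that every operation preceding the deletion of $v$ is a pivot on an edge incident to $v$'' does not follow from facts (i), (ii), nor from the stated identity. What is actually needed is the stronger invariance statement that for $G'$ pivot-equivalent to $G$ (with $v$ in both), the pair of pivot-equivalence classes $\bigl\{[G-v],\,[(G\times vw)-v]\bigr\}$ equals $\bigl\{[G'-v],\,[(G'\times vw')-v]\bigr\}$. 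Your identity handles the case $G'=G\times vw$ (since pivoting $\times vw$ twice returns $G$), but it does \emph{not} handle the case $G'=G\times ab$ with $v\notin\{a,b\}$: there you must show $(G\times ab\times vw)-v$ is pivot-equivalent to $(G\times vw')-v$ for some $w'\in N_G(v)$, and $N_{G\times ab}(v)$ may even be disjoint from $N_G(v)$, so one cannot simply choose a common neighbour and commute. This is the genuine content of Bouchet's argument, and your closing remark that ``this too ultimately reduces to the identity above'' glosses over it; a direct case analysis (on how $a,b$ sit relative to $N_G(v)$ and $N_G(w)$) or an appeal to isotropic-system machinery is required to close the gap.
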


\begin{lemma}
\label{lemma:vm}
Let $H$ be a vertex-minor of a graph $G$. If $v\in V(G)\setminus V(H)$, then either
\begin{itemize}
\item $H$ is a vertex-minor of $G-v$, 
\item $H$ is a vertex-minor of $(G*v)-v$, or
\item for each $w\in N(v)$,  the graph $H$ is a vertex-minor of $(G\times vw)-v$.
\end{itemize}
\end{lemma}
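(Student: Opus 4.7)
The plan is to proceed by induction on the length $k$ of the sequence of local complementations and vertex deletions that witnesses $H$ as a vertex-minor of $G$. Two tools will be needed throughout. The first is a routine commutation identity: for any $u \neq v$, we have $(G*u) - v = (G-v)*u$, because local complementation at $u$ only modifies edges among $N_G(u)$, so whether $v$ was present before the deletion makes no difference afterwards. Consequently the deletion of $v$ commutes with every operation at a vertex other than $v$. The second tool is the pivot identity $G \times uv = G*u*v*u = G*v*u*v$ from the definition of pivoting, which lets us convert sequences of the form $*u*v$ (with $uv \in E(G)$) into a single pivot, at the cost of an extra local complementation.

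In the inductive step I analyse the first operation $op_1$ of the sequence. If $op_1 = -v$, we are immediately in case~1. If $op_1$ deletes or locally complements some $u \neq v$, then by induction $H$ is a vertex-minor of one of the three candidate graphs for $op_1(G)$, and the commutation identity (together with the analogous identity for pivots on edges away from $v$) turns each such candidate into the corresponding candidate for $G$. The only delicate sub-case here is when $op_1 = *u$ with $u \in N_G(v)$ and the inductive candidate is $((G*u)*v) - v$: writing $G*u*v = (G \times uv)*u$ via the pivot identity, this graph becomes $((G \times uv) - v)*u$, landing us in case~3 with $w = u$. Finally, if $op_1 = *v$, the three candidates for $G*v$ translate cleanly to candidates for $G$: $(G*v)-v$ is case~2; $((G*v)*v)-v = G-v$ is case~1; and for each $w \in N_{G*v}(v) = N_G(v)$, the graph $((G*v) \times vw) - v$ simplifies via the pivot identity to $(G*w*v)-v = ((G \times vw) - v)*w$, which is case~3.

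The main obstacle is that case~3 is universally quantified over $w \in N(v)$ while the induction only naturally produces the conclusion for some specific $w_0$. I would address this with an auxiliary claim: for all $w, w' \in N_G(v)$, the graphs $(G \times vw) - v$ and $(G \times vw') - v$ are locally equivalent. Granting the claim, the existence of some $w_0$ satisfying case~3 automatically upgrades to every $w \in N(v)$ satisfying case~3. The claim itself is a direct computation using Lemma~\ref{lemma:pivot}: both graphs can be expressed as modifications of $(G-v)*w$ and $(G-v)*w'$ respectively, with prescribed edge flips among $N_G(v)$, and one can exhibit an explicit sequence of local complementations at vertices in $N_G(v) \cup \{w, w'\}$ transforming one into the other. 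With this claim in hand the induction closes, and the lemma follows.
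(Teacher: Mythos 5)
The paper does not prove Lemma~\ref{lemma:vm}; it cites Bouchet~\cite{graphicIsoSystems}, and only explicitly records (in the paragraph after the lemma) the observation underlying your auxiliary claim, namely that $(G\times vw_1)-v$ and $(G\times vw_2)-v$ are equivalent for all $w_1,w_2\in N(v)$ --- in fact more precisely $(G\times vw_2)-v=((G\times vw_1)-v)\times w_1w_2$. So your auxiliary claim is correct and is indeed the key point, and the overall induction on the length of the witnessing sequence is a reasonable route.

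However, the proposal has real gaps. First, the inductive hypothesis is a disjunction whose third disjunct is universally quantified over $N(v)$; when $v$ becomes isolated in $op_1(G)$ (e.g.\ $op_1=-u$ where $u$ was the unique neighbour of $v$), that disjunct is vacuously true and the hypothesis yields no usable graph for which $H$ is a vertex-minor. You need a separate argument that if $v$ is isolated in a graph $G'$ and $H$ is a vertex-minor of $G'$ avoiding $v$, then $H$ is a vertex-minor of $G'-v$; without this the induction does not close. Second, the assertion that commutation ``turns each such candidate into the corresponding candidate'' with ``the only delicate sub-case'' being $((G*u)*v)-v$ for $u\in N(v)$ understates what is going on. For instance, when $op_1=*u$ with $u\in N_G(v)$ and the inductive case is (3), taking $w'=u$ gives $((G*u)\times vu)-v=(G*v*u)-v=((G*v)-v)*u$, which lands in case~(2), not case~(3); and when $u\notin N_G(v)$, passing case~(2) for $G*u$ back to $G$ requires that $*u$ and $*v$ commute for non-adjacent $u,v$, a true fact but not among your stated tools, and passing case~(3) back requires a nontrivial manipulation that depends further on whether $u$ and $w'$ are adjacent. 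None of these are fatal, but each is a concrete missing step, and collectively they show the ``routine'' translation is more delicate than the write-up suggests.
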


Underlying these two results is the fact that, if $v$ is a vertex in a graph $G$ and $w_1$ and $w_2$ are neighbours of $v$, then
$(G\times vw_1)-v$ and $(G\times vw_2)-v$ are equivalent up to pivoting. By Lemma~\ref{lemma:pivot}, the vertices $w_1$ and $w_2$ are adjacent in $G\times vw_1$, and $(G\times vw_2)-v = ((G\times vw_1)-v)\times w_1w_2$; see~\cite{graphicIsoSystems}.

\subsection*{Cut-rank and rank-width}
Let $G$ be a graph with adjacency matrix $A$. That is, $A$ is the $V(G) \times V(G)$ matrix whose $(u,v)$ entry is one if $uv \in E(G)$ and zero otherwise. The \textit{cut-rank} of $X\subseteq V(G)$, denoted $\rho_G(X)$ (or just $\rho(X)$ if the graph is clear) is the rank over the binary field of the submatrix of $A$ with rows $X$ and columns $V(G)\setminus X$. As a function on subsets of $V(G)$, cut-rank is symmetric and submodular~\cite{RWAndVM}. Furthermore, the cut-rank function is invariant under local complementation:

\begin{lemma}\cite[Proposition~2.6]{RWAndVM}
\label{lemma:cutrank}
If $G$ and $\tilde{G}$ are locally equivalent and $X \subseteq V(G)$, then $\rho_G(X) = \rho_{\tilde G}(X)$.
\end{lemma}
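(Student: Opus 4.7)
The plan is to reduce to a single local complementation and then exhibit the change in the cut-matrix as a sequence of elementary row operations over $\mathbb{F}_2$.

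Since local equivalence is by definition a sequence of local complementations, it suffices to prove that $\rho_{G*v}(X) = \rho_G(X)$ for an arbitrary vertex $v$ of $G$ and an arbitrary $X \subseteq V(G)$. Let $M$ be the $X \times (V(G)\setminus X)$ submatrix of the adjacency matrix of $G$, and let $M'$ be the corresponding submatrix for $G*v$. By the symmetry of cut-rank ($\rho(X) = \rho(V(G)\setminus X)$, which can be quoted from \cite{RWAndVM} or argued directly), I may assume without loss of generality that $v \in X$.

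Next I would describe $M'$ explicitly. Local complementation at $v$ leaves all edges incident to $v$ unchanged, and toggles edges $ab$ with $a,b \in N(v)$. Thus the row of $M'$ indexed by $v$ is identical to the row of $M$ indexed by $v$, and for any $a \in X$ with $a \neq v$, the row of $M'$ indexed by $a$ differs from the row of $M$ indexed by $a$ exactly in the positions $b \in (V(G)\setminus X) \cap N(v)$ (and only when $a \in N(v)$). Writing $r_v$ for the row of $M$ indexed by $v$, we have $r_v$ equal to the indicator vector of $(V(G)\setminus X)\cap N(v)$. Hence, for $a \in (X\setminus\{v\}) \cap N(v)$, the new row is $r_a + r_v$ (in $\mathbb{F}_2$), while for all other rows $a$ the new row equals $r_a$.

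So $M'$ is obtained from $M$ by a sequence of elementary row operations (adding $r_v$ to some rows), which over any field preserves the rank. Therefore $\rho_{G*v}(X) = \rank(M') = \rank(M) = \rho_G(X)$, as required. The only potential subtlety is bookkeeping for the case when $v \notin X$, which is handled by the symmetry remark above (or by running the same argument with columns in place of rows); there is no real obstacle here, as the whole lemma amounts to the observation that local complementation at $v$ acts on the cut-matrix as row or column addition of $r_v$.
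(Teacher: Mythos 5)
The paper quotes this result as Proposition~2.6 of Oum's ``Rank-width and vertex-minors'' \cite{RWAndVM} and gives no proof of it, so there is no in-paper argument to compare against. Your direct proof is correct and complete: reducing to a single local complementation, invoking the symmetry $\rho_G(X)=\rho_G(V(G)\setminus X)$ to assume $v\in X$, and then observing that the cut-matrix of $G*v$ is obtained from that of $G$ by replacing the row $r_a$ with $r_a+r_v$ exactly for $a\in (X\setminus\{v\})\cap N(v)$ is an accurate account of the effect of local complementation (the row $r_v$ itself is unchanged because edges incident to $v$ are unchanged, and the toggled positions in $r_a$ are precisely the support of $r_v$). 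Since these are elementary row operations over $\mathbb{F}_2$, the rank is preserved. One small point worth spelling out is that this really is a legitimate sequence of row operations because $v\notin N(v)$ in a simple graph, so $r_v$ is never among the rows being modified and remains fixed throughout. This is the standard elementary argument for the invariance of cut-rank under local complementation, essentially the same as Oum's original.
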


We next define rank-width, which was introduced by Oum and Seymour~\cite{approxCWBW}. These definitions are not needed in the paper, but we include them for completeness. A \textit{rank-decomposition} of a graph $G$ is a tree $T$, having $V(G)$ as its set of leaves, whose vertices each have degree either one or three. The \textit{width} of an edge $e$ of $T$ is the cut-rank in $G$ of the set of all leaves of one of the components of $T-e$. Finally, the $\textit{rank-width}$ of $G$ is the minimum, over all rank-decompositions $T$ of $G$, of the maximum width of an edge of $T$. Graphs with at most one vertex do not admit rank-decompositions and we define their rank-width to be zero. It follows from Lemma~\ref{lemma:cutrank} that if $H$ is a vertex-minor of $G$, then the rank-width of $H$ is at most the rank-width of $G$. 

To prove Theorem~\ref{main} it suffices to consider a graph $G$ that is vertex-minor-minimal with rank-width at least $r(H)+1$. The following result of Oum~\cite{RWAndVM} shows that $G$ is highly connected in the sense that one side of any separation with low cut-rank is necessarily small. For a positive integer $m$ and a function $f$, a graph $G$ is \textit{$(m,f)$-connected} if for every partition $(X, Y)$ of $V(G)$ with $\rho(X)<m$, either $|X| \leq f(\rho(X))$ or $|Y|\leq f(\rho(X))$.

\begin{lemma}\cite[Lemma~5.3]{RWAndVM}
\label{lemma:mfConnected}
Define a function $g_{\ref{lemma:mfConnected}}:\bZ\rightarrow \bZ$ by $g_{\ref{lemma:mfConnected}}(n) = (6^{n}-1)/5$. For every positive integer $r$, if $G$ is a graph that is vertex-minor-minimal with rank-width at least $r$, then $G$ is $(r,g_{\ref{lemma:mfConnected}})$-connected.
\end{lemma}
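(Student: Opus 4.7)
I would prove Lemma~\ref{lemma:mfConnected} by induction on $n = \rho_G(X)$, leveraging the recurrence $g_{\ref{lemma:mfConnected}}(n) = 6\,g_{\ref{lemma:mfConnected}}(n-1)+1$ with base value $g_{\ref{lemma:mfConnected}}(0) = 0$.

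For the base case $n = 0$, there are no edges between $X$ and $Y$, so $G$ is a disjoint union of its induced subgraphs on $X$ and on $Y$. Since rank-width is the maximum of the rank-widths of the connected components, if both $|X|, |Y| \geq 1$, then deleting all vertices on the side whose induced subgraph has smaller rank-width produces a proper vertex-minor of $G$ of rank-width at least $r$, contradicting the vertex-minor-minimality of $G$. Hence $\min(|X|,|Y|) = 0 \leq g_{\ref{lemma:mfConnected}}(0)$.

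For the inductive step ($n \geq 1$), I would assume the result holds for $n-1$ and suppose toward contradiction that $\rho(X) = n < r$ while both $|X|, |Y| > g_{\ref{lemma:mfConnected}}(n) = 6\,g_{\ref{lemma:mfConnected}}(n-1)+1$. Since $n \geq 1$, some $v \in X$ has a neighbour $w \in Y$. By Lemma~\ref{lemma:vm}, each of $G-v$, $(G * v) - v$, and $(G \times vw) - v$ is a proper vertex-minor of $G$, so each has rank-width strictly less than $r$ by minimality. In each of these three graphs, the partition $(X - v,\, Y)$ has cut-rank equal to either $n$ or $n - 1$, by Lemma~\ref{lemma:cutrank} together with the fact that deleting a single vertex decreases cut-rank by at most one. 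The plan is to find $v$ and an operation producing a vertex-minor $G'$ such that $\rho_{G'}(X - v) = n - 1$ while both sides of the partition still exceed $g_{\ref{lemma:mfConnected}}(n-1)$, and then appeal to the inductive hypothesis, applied at an appropriate level to a vertex-minor-minimal descendant of $G'$, to derive a contradiction. The factor $6$ in the recurrence suggests a pigeonhole classification: each $v \in X$ with a neighbour in $Y$ falls into one of roughly six classes determined by the cut-rank outcomes of its three possible reductions, and the hypothesis $|X| > 6\,g_{\ref{lemma:mfConnected}}(n-1)+1$ guarantees that some such class has more than $g_{\ref{lemma:mfConnected}}(n-1)$ members, from which a suitable $v$ can be extracted.

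The principal obstacle I anticipate is the transition from $G'$ (which has rank-width strictly less than $r$, so the inductive hypothesis at level $r$ does not apply) to a graph where the inductive hypothesis is available. My intended resolution is to descend to a vertex-minor-minimal proper vertex-minor of $G'$ whose rank-width equals that of $G'$ (some $r' < r$), retain the partition $(X - v,\, Y)$, and apply the inductive hypothesis at level $r'$. Coordinating this descent with the classification of reduction types on $X$, and verifying that both sides of the partition remain strictly larger than $g_{\ref{lemma:mfConnected}}(n-1)$ throughout --- in particular, that deleting further vertices to reach a vertex-minor-minimal descendant does not collapse the small side below this threshold --- is the technical core of the argument.
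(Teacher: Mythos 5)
The base case ($n=0$) is correct, and the framework — three removal operations, the observation that each reduction drops the cut-rank by $0$ or $1$, and a pigeonhole over behaviour types to account for the factor $6$ — is in the right spirit. However, the inductive step as sketched has a genuine logical gap that the proposed resolution does not close.

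The contradiction in a proof of this lemma must ultimately take one of two forms: either $G$ has rank-width $< r$, or $G$ has a \emph{proper} vertex-minor of rank-width $\geq r$ (violating minimality). Your mechanism never produces either. The graph $G'$ you construct is a proper vertex-minor of $G$, hence by minimality has rank-width $< r$; descending further to a vertex-minor-minimal $G''$ of $G'$ produces a graph of even smaller rank-width. Applying the inductive hypothesis (at level $n-1$) to $G''$ yields a statement about partitions \emph{of $G''$}, which is a different graph from $G$; no amount of bookkeeping converts that into a contradiction about the partition $(X,Y)$ of $G$. The conclusion you could extract — that some partition of $G''$ with cut-rank $n-1$ has one small side — is simply true and unhelpful, not contradictory. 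There is also the subsidiary issue you flag: the descent from $G'$ to $G''$ deletes uncontrolled vertices from both sides and may reduce the cut-rank of the remnant of $(X-v,Y)$ below $n-1$, so even the \emph{hypotheses} of the inductive statement need not hold for that partition in $G''$.

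What a correct argument must establish is that when $|X| > g_{\ref{lemma:mfConnected}}(n)$, some vertex of $X$ admits a removal (among the three types) that \emph{preserves} rank-width — this directly exhibits a proper vertex-minor of $G$ with rank-width $\geq r$, contradicting minimality. The pigeonhole by removal type is indeed part of such an argument, but the missing ingredient is a substitution/interchange lemma: two vertices of $X$ that fall in the same of the (roughly) six classes must interact with $Y$ and with any optimal rank-decomposition so similarly that deleting one of them cannot lower the rank-width. That substitution argument, not an appeal to the inductive hypothesis on a smaller-rank-width vertex-minor, is the engine of Oum's proof, and it is absent from your proposal.
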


There is an easy partial converse to Lemma~\ref{lemma:mfConnected} that, if $G$ is an $(r,g_{\ref{lemma:mfConnected}})$-connected graph with at least $3 g_{\ref{lemma:mfConnected}}(r-1)$ vertices,
then $G$ has rank-width at least $r$. It follows that, with respect to proving Theorem~\ref{main}, it suffices to consider large $(r,g_{\ref{lemma:mfConnected}})$-connected graphs, which is why we do not explicitly require the definition of rank-width. We do, however, require one additional result of 
Kwon and Oum~\cite{scattered}, on rank-width; in this result, by a {\em star} we mean a tree having at most one
non-leaf vertex.

\begin{theorem}\cite[Theorem~1.6]{scattered}
\label{thm:SF}
There is a function $r_{\ref{thm:SF}}:\bZ\rightarrow\bZ$ so that, for all positive integers $m$ and $k$, 
if $G$ is a graph of rank-width at least $r_{\ref{thm:SF}}(m,k)$, then $G$ has a vertex-minor with $m$ components 
each of which is a star on $k+1$ vertices.
\end{theorem}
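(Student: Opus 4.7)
The plan is to proceed by induction on $m$, the number of stars, with the base case supplying a single star and the inductive step peeling off one star at a time while preserving rank-width for the rest.

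For the base case $m=1$, it suffices to show that any graph of sufficiently large rank-width has $K_{1,k}$ as a vertex-minor. I would derive this from (or reprove using the same ideas as) Oum's theorem that every fixed tree appears as a vertex-minor in graphs of sufficiently large rank-width; applied to $T=K_{1,k}$, this gives the base case. An equivalent route is to first extract a long induced path as a pivot-minor (which also follows from large rank-width after applying the tools on cut-rank in Section~2), and then notice that $K_{1,k}$ is a pivot-minor of any induced path on at least $k+2$ vertices, obtained by a single pivot on the second edge followed by deletion of the pivoted vertex.

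For the inductive step, suppose $r_{\ref{thm:SF}}(m-1,k)$ has been defined, and set $r_{\ref{thm:SF}}(m,k)=r_{\ref{thm:SF}}(m-1,k)+(k+1)+c(k)$ for a constant $c(k)$ large enough to accommodate the construction below. Given $G$ of rank-width at least $r_{\ref{thm:SF}}(m,k)$, we may assume by Lemma~\ref{lemma:mfConnected} that $G$ is vertex-minor-minimally of rank-width at least $r_{\ref{thm:SF}}(m,k)$, and hence $(r,g_{\ref{lemma:mfConnected}})$-connected for $r=r_{\ref{thm:SF}}(m,k)$. The goal is to find a vertex set $X_1\subseteq V(G)$ of size $k+1$ and a graph $G'$ locally equivalent to $G$ such that $X_1$ is a union of connected components of $G'$ and $G'[X_1]=K_{1,k}$. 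Once this is achieved, the induced subgraph $G'-X_1$ is a vertex-minor of $G$ whose rank-width is at least $\mathrm{rw}(G)-(k+1)\ge r_{\ref{thm:SF}}(m-1,k)$; applying the induction hypothesis to $G'-X_1$ yields $m-1$ scattered stars whose local complementations take place entirely within $V(G)\setminus X_1$, so they cannot disturb $G'[X_1]$; these stars together with the star on $X_1$ give the required $m$ scattered stars.

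The main obstacle is the \emph{isolation step}: producing $G'$ in which $X_1$ is a union of components. A candidate $K_{1,k}$-vertex-minor of $G$ almost always sits on a set $X_1$ with many boundary edges to $V(G)\setminus X_1$, and naively removing these edges by local complementations at vertices of $V(G)\setminus X_1$ can destroy $G'[X_1]$. To circumvent this, I would build $X_1$ inside a long induced path (obtained as a pivot-minor using large rank-width) and locate it near one end of the path, using a short "buffer" of interior path-vertices that can be deleted to sever $X_1$ from the rest, so that after the deletions $X_1$ becomes a component of the ambient graph. The delicate point is that the buffer must be short (only $c(k)$ vertices, to respect our rank-width budget) and the construction of $K_{1,k}$ on $X_1$ must be confined to $X_1\cup\mathrm{buffer}$ so that no operations reach the remainder of the graph. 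The bulk of the proof would be devoted to making this local construction precise, possibly with a Ramsey-style cleanup of the boundary between $X_1$ and its complement driven by the $(r,g_{\ref{lemma:mfConnected}})$-connectivity of $G$.
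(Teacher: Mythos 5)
The paper does not prove Theorem~\ref{thm:SF}; it is quoted from Kwon and Oum~\cite{scattered}, so there is no internal proof to compare yours against. Evaluating your proposal on its own terms, the inductive frame is sensible (peel off one star as a literal union of components, note that deleting a bounded buffer drops rank-width by at most its size, observe that later local complementations inside $V(G')\setminus X_1$ cannot disturb $G'[X_1]$ once $X_1$ is a union of components), but the step you yourself flag as the main obstacle --- the isolation step --- is a genuine gap, not just a detail to be filled in.

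Concretely, your plan needs, up to local equivalence and deletion of a buffer $B$ with $|B|\le c(k)$, a set $X_1$ inducing $K_{1,k}$ with $N(X_1)\subseteq X_1\cup B$. The construction you sketch does not deliver this. Extracting a long induced path as a pivot-minor of $G$ requires deleting most of $V(G)$; the graph $\tilde G$ in which the path is an induced subgraph is itself a vertex-minor of $G$ whose rank-width may have collapsed to $1$, so after carving out $X_1\cup B$ there is nothing left of rank-width $\ge r_{\ref{thm:SF}}(m-1,k)$ to feed to the inductive hypothesis. The path is available as a pivot-minor of $G$, not as an induced subgraph of a graph \emph{locally equivalent} to $G$, and your buffer trick needs the latter. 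If instead you insist $G'$ be locally equivalent to $G$ except for deleting $B$, the requirement $N_{G'}(X_1)\subseteq X_1\cup B$ is a strong structural claim for which you give no argument: local complementation preserves $\rho(X_1)$ (by Lemma~\ref{lemma:cutrank}), which bounds the number of distinct neighbour types that vertices of $V\setminus X_1$ have in $X_1$, but it does not bound how many vertices realize a nonzero type, which is exactly what $|B|$ must control. In a dense, highly connected graph (the setting Lemma~\ref{lemma:mfConnected} puts you in), essentially every vertex has a nonzero type, and there is no evident mechanism by which local complementation herds all of them into a set of size $c(k)$. Until this step is carried out, or replaced by a fundamentally different mechanism for producing the $m$ disjoint star components simultaneously rather than one at a time, the proof is incomplete. (The base case is comparatively minor; stars as vertex-minors of high-rank-width graphs are plausible, but the appeal to an unspecified ``Oum's theorem'' about arbitrary trees should be pinned down to an actual reference or reproved, since the general tree statement is itself a consequence of the grid theorem you would be helping to prove.)
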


\section{Circle Graphs}
\label{sec:circleGraphs}

A {\em chord diagram} is a collection of chords of the unit circle. A \textit{circle graph} is the intersection graph of chords in a chord diagram.
We allow two chords to have a common end on the circle, however, it is always possible to perturb the chords so as to avoid this; a chord diagram is {\em simple} if no two chords have a common end. 

 The main result of this section is that each circle graph is isomorphic to a vertex-minor of a comparability grid.
\begin{lemma}
\label{lemma:compGrid}
Every circle graph on $n$ vertices is isomorphic to a vertex-minor of the $3n \times 3n$ comparability grid.
\end{lemma}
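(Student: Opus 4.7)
The plan is a direct construction. Let the chord diagram $D$ of $H$ have endpoints $p_1,\ldots,p_{2n}$ in cyclic order around the circle, and write each chord as $c_k = \{p_{a_k}, p_{b_k}\}$ with $a_k < b_k$, so that $a_k, b_k \in \{1,\ldots,2n\} \subseteq \{1,\ldots,3n\}$. First I would take, as candidate vertices in the $3n \times 3n$ comparability grid $\Gamma$, the points $v_k = (a_k, b_k)$ for $k=1,\ldots,n$. A short case analysis on the relative positions of two chords in $D$ (crossing, nested, separated) shows that $v_k$ and $v_l$ are adjacent in $\Gamma$ iff the tuples $(a_k, b_k)$ and $(a_l, b_l)$ are componentwise comparable, which happens iff $c_k$ and $c_l$ are not nested. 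Since $H$ has an edge $v_k v_l$ exactly when $c_k$ and $c_l$ cross, the initial induced subgraph of $\Gamma$ on $\{v_1,\ldots,v_n\}$ already agrees with $H$ on nested and crossing pairs, and the only discrepancy is that each separated pair carries one extra edge.

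Second, I would eliminate those extra edges by applying a sequence of local complementations at auxiliary vertices of $\Gamma$ drawn from the ``buffer'' rows and columns $\{2n+1,\ldots,3n\}$, and then delete every vertex outside $\{v_1,\ldots,v_n\}$. Local complementation at a vertex $u$ of $\Gamma$ toggles the edge $v_k v_l$ in the induced subgraph on $\{v_1,\ldots,v_n\}$ precisely when $u$ is comparable in $\Gamma$ to both $v_k$ and $v_l$. To keep the combined effect tractable, I would choose the auxiliary vertices to form an independent set in $\Gamma$ so that the local complementations pairwise commute; the net effect on edges within $\{v_1,\ldots,v_n\}$ is then the $\mathbb{F}_2$-sum of the individual flip patterns $N_u N_u^{\top}$, where $N_u \in \{0,1\}^n$ is the indicator of $\{v_k : u \sim v_k\}$.

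The main obstacle will be to show that the required ``separated-pair'' indicator matrix always lies in the $\mathbb{F}_2$-span of the flip patterns $N_u N_u^{\top}$ available from the buffer. The factor of $3$ in $3n$ is what provides the slack: the $n$ extra rows and $n$ extra columns beyond those occupied by the $v_k$'s supply buffer vertices realizing a rich family of neighborhoods within $\{v_1,\ldots,v_n\}$. For a pair $c_k, c_l$ of separated chords with $b_k < a_l$, one can typically place a buffer vertex whose coordinates put it ``in the gap'' between the intervals $[a_k, b_k]$ and $[a_l, b_l]$ and in the buffer region, so that it is comparable to both $v_k$ and $v_l$ but incomparable to every other $v_j$; a single local complementation at such a vertex then flips only the edge $v_k v_l$. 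Residual configurations in which no single auxiliary vertex has the right neighborhood would be handled by combining two or three buffer vertices whose flip patterns cancel on the unwanted pairs, a routine $\mathbb{F}_2$-linear-algebra argument that is made feasible by the abundance of buffer vertices. Once the required flips are realized, deleting all vertices of $\Gamma$ outside $\{v_1,\ldots,v_n\}$ yields a vertex-minor isomorphic to $H$.
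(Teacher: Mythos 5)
Your embedding $v_k=(a_k,b_k)$ and the observation that the induced subgraph of $\Gamma$ on $\{v_1,\ldots,v_n\}$ agrees with $H$ except for one extra edge per separated pair are both correct. The gap is in the correction step, and it is not an omitted-but-routine argument: as stated the claim is false, and the ``gap'' intuition misreads the adjacencies. If $u=(x,y)$ has $x>2n$ then $x>a_j$ for every $j$, so $u$ is adjacent to $v_j$ if and only if $y\ge b_j$; thus $N_u\cap\{v_1,\ldots,v_n\}=\{v_j:b_j\le y\}$ is a threshold set in the $b$-order, and symmetrically it is $\{v_j:a_j\le x\}$ when $y>2n$. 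A vertex placed ``in the gap'' between $[a_k,b_k]$ and $[a_l,b_l]$ has both coordinates at most $2n$ and is therefore not a buffer vertex at all, and no buffer vertex is adjacent to $v_k$ and $v_l$ alone unless $b_k,b_l$ (or $a_k,a_l$) happen to be the two smallest values. The available flip patterns $N_uN_u^{\top}$ thus form a threshold family of size roughly $2n$, and their $\mathbb{F}_2$-span need not contain the separated-pair matrix. Concretely, for $n=3$ with chords $c_1=\{1,3\}$, $c_2=\{4,6\}$, $c_3=\{2,5\}$, the unique separated pair is $\{c_1,c_2\}$, but the realisable buffer neighbourhoods in $\{v_1,v_2,v_3\}$ are only $\emptyset$, $\{v_1\}$, $\{v_1,v_3\}$, $\{v_1,v_2,v_3\}$, and the $\mathbb{F}_2$-span of the corresponding off-diagonal flip patterns is $\{\emptyset,\,\{v_1v_3\},\,\{v_1v_2,v_1v_3,v_2v_3\},\,\{v_1v_2,v_2v_3\}\}$, which misses the required $\{v_1v_2\}$. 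Rescuing the approach would require choosing a favourable cyclic rotation of the chord endpoints, or allowing several non-commuting rounds of local complementation, or using non-buffer auxiliaries; none of this is addressed, and none of it is routine.

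The paper takes a genuinely different route that avoids this linear-algebra problem entirely by factoring through permutation graphs. Lemma~\ref{vmSplittable} shows that every $n$-vertex circle graph is a vertex-minor of a permutation graph on $3n$ vertices: fix an arc $A$ and iteratively replace a chord with both ends in $A$ by a crossing chord flanked by two nearly parallel new chords $x,y$, chosen so that local complementation at $x$ and $y$ recovers the original diagram; each step destroys one non-crossing chord at the cost of two new chords. Lemma~\ref{splittable} then embeds any $m$-vertex permutation graph as an induced subgraph of the $m\times m$ comparability grid. Composing gives the $3n$ bound cleanly, with no need to realise an arbitrary target matrix as a span of rank-one flips.
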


To prove this result we show that every circle graph is a vertex-minor of a ``permutation graph" and that every permutation graph is an induced subgraph of a comparability grid. For a permutation $\pi$ of $\{1,\ldots,n\}$ the {\em permutation graph} represented by $\pi$ is the graph $F_{\pi}$ with vertex set $\{1,\ldots,n\}$ where vertices $i$ and $j$, with $i<j$, are adjacent if and only if $\pi_i>\pi_j$. To see that permutation graphs are circle graphs, place distinct points $b_1,b_2,\ldots,b_n,a_n,a_{n-1},\ldots,a_1$ in clockwise order around a circle and represent each vertex $i\in\{1,\ldots,n\}$ by the chord connecting $a_i$ to $b_{\pi_i}$.

\begin{figure}
\centering
\begin{tikzpicture}[scale = 1.5, every node/.style={inner sep=0,outer sep=2, fill=none}]
\draw[thick] (0,0) circle (2) {};
\foreach \i in {1,2,3}
{
    \filldraw
    (\i*40+100:2) circle (2pt)
    (180-100-40*\i:2) circle (2pt);
    \node[label=left:{$a_{\i}$}, draw] (A\i) at (\i*40+100:2) {};
    \node[label=right:{$b_{\i}$}, draw] (B\i) at (180-100-40*\i:2) {};
}
\foreach \i in {1,2,3}
{\foreach \j in {1,2,3}
    {
    \draw (A\i) -- (B\j);
    }
}
\end{tikzpicture}
\caption{A chord diagram for $F_3$}
\label{fig:chordCG}
\end{figure}
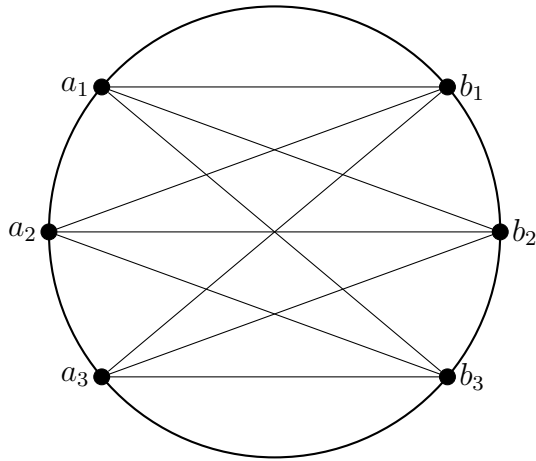

Let $\cC$ denote the set of all chords having one end in $\{a_1,\ldots,a_n\}$ and one end in $\{b_1,\ldots,b_n\}$ and let $F_n$
denote the corresponding circle graph. For example, the chord diagram for $F_3$ is depicted in Figure~\ref{fig:chordCG}. The chords in bold in Figure \ref{fig:chordISG} depict the chord diagram for the permutation graph $F_\pi$ where $\pi=(1)(3,2)$. Note that:
\begin{itemize}
\item[$(i)$]  every $n$-vertex permutation graph is isomorphic to an induced subgraph of $F_n$, and
\item[$(ii)$] $F_n$ is isomorphic to the $n\times n$ comparability grid (the vertex $(i,j)$ of the comparability
grid is associated with the chord $a_ib_{n+1-j}$).
\end{itemize}

Thus we have proved that:
\begin{lemma}
\label{splittable}
Every $n$-vertex permutation graph is isomorphic to an induced subgraph of the $n \times n$ comparability grid.
\end{lemma}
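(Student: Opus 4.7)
The plan is to follow the two observations $(i)$ and $(ii)$ already laid out in the text; essentially everything needed has been set up, and it remains to verify the two claims carefully.

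For $(i)$, given a permutation $\pi$ of $\{1,\dots,n\}$, consider the subcollection of chords $\{a_ib_{\pi_i}:i\in\{1,\dots,n\}\}\subseteq\cC$. By construction, the intersection graph of this subcollection is exactly $F_\pi$, and since $F_n$ is the intersection graph of all of $\cC$, the graph $F_\pi$ is the induced subgraph of $F_n$ on the corresponding vertices. So every $n$-vertex permutation graph is isomorphic to an induced subgraph of $F_n$.

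For $(ii)$, I would define the bijection $\varphi:(i,j)\mapsto a_ib_{n+1-j}$ from the vertex set of the $n\times n$ comparability grid to $V(F_n)$, and then check that $\varphi$ preserves adjacency. With the clockwise ordering $b_1,b_2,\dots,b_n,a_n,a_{n-1},\dots,a_1$ on the circle, I would argue that for $i<i'$ the chords $a_ib_k$ and $a_{i'}b_{k'}$ cross precisely when $k>k'$: writing the four endpoints in clockwise order shows they alternate around the circle exactly in this case, while otherwise one chord nests inside the other. Substituting $k=n+1-j$ and $k'=n+1-j'$ turns the crossing condition $k>k'$ into $j<j'$. Thus for $i<i'$ the vertices $\varphi(i,j)$ and $\varphi(i',j')$ are adjacent in $F_n$ iff $j\leq j'$, which matches the comparability-grid rule that $(i,j)$ and $(i',j')$ are adjacent iff $i\leq i'$ and $j\leq j'$ or vice versa.

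The main (modest) obstacle is simply being careful with the cyclic ordering when checking the crossing condition; once this is done, combining $(i)$ and $(ii)$ is immediate: any $n$-vertex permutation graph embeds as an induced subgraph of $F_n$, and $\varphi^{-1}$ exhibits $F_n$ as an isomorphic copy of the $n\times n$ comparability grid, completing the proof.
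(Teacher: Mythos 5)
Your proof follows the paper's approach exactly: verify observations $(i)$ and $(ii)$ stated just before the lemma. Your treatment of $(i)$ is fine, and your crossing analysis for $(ii)$ is correct as far as it goes, but there is a gap at the end. You establish that for $i<i'$ and $k\neq k'$ the chords $a_ib_k$ and $a_{i'}b_{k'}$ cross if and only if $k>k'$, and substituting $k=n+1-j$ correctly turns this into $j<j'$. But you then conclude ``adjacent in $F_n$ iff $j\leq j'$'': the strict inequality has silently become non-strict, and you never address the case $i=i'$. These are exactly the situations where the two chords in $\cC$ share an endpoint — same column of the grid ($j=j'$) gives two chords through $b_{n+1-j}$, and same row ($i=i'$) gives two chords through $a_i$ — and your crossing dichotomy (``alternate'' vs.\ ``nest'') simply does not apply when four distinct endpoints are not available.

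To close the gap you should observe separately that chords sharing an endpoint are taken to intersect (the paper's definition of circle graph explicitly allows chords to share ends, and the diagram $\cC$ is deliberately non-simple). This handles both the $j=j'$ and $i=i'$ cases and matches the fact that every same-row and same-column pair of the comparability grid is adjacent. With that sentence added, $(ii)$ is fully verified and the lemma follows as you describe; without it, the inference from the crossing condition $j<j'$ to adjacency iff $j\leq j'$ is not justified.
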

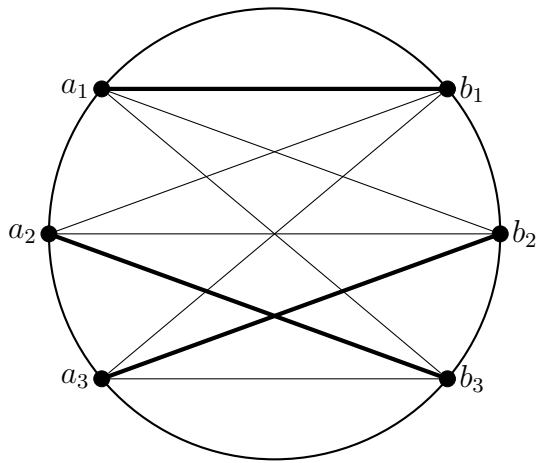
\begin{figure}
\centering
\begin{tikzpicture}[scale = 1.5, every node/.style={inner sep=0,outer sep=2, fill=none}]
\draw[thick] (0,0) circle (2) {};
\foreach \i in {1,2,3}
{
    \filldraw
    (\i*40+100:2) circle (2pt)
    (180-100-40*\i:2) circle (2pt);
    \node[label=left:{$a_{\i}$}, draw] (A\i) at (\i*40+100:2) {};
    \node[label=right:{$b_{\i}$}, draw] (B\i) at (180-100-40*\i:2) {};
}
\foreach \i in {1,2,3}
{\foreach \j in {1,2,3}
    {
    \draw (A\i) -- (B\j);
    }
}
 \draw[ultra thick] (A1) -- (B1);
 \draw[ultra thick] (A2) -- (B3);
 \draw[ultra thick] (A3) -- (B2);
\end{tikzpicture}
\caption{Chord diagrams for $F_\pi$ and $F_3$}
\label{fig:chordISG}
\end{figure}

The class of circle graphs is closed under vertex-minors and, to complete the proof of Theorem~\ref{lemma:compGrid},
we need to understand the effect of local complementation on chord diagrams. Let $\cC$ be a simple chord diagram 
for a circle graph $G$ and let $v\in V(G)$. The chord $v$ separates the circle into two open arcs $(A_1,\, A_2)$, and
we can obtain a chord diagram for $G*v$ by ``flipping'' $A_1$ (where by flip we mean invert the arc under reflective symmetry; chords follow their ends).

We conclude this section by proving the following result, which completes the proof of Lemma~\ref{lemma:compGrid}.
\begin{lemma}
\label{vmSplittable}
Every circle graph on $n$ vertices is a vertex-minor of  a permutation graph on $3n$ vertices.
\end{lemma}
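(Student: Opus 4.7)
The plan is to construct, from a chord diagram for $G$, an explicit permutation graph $H$ on $3n$ vertices containing $G$ as a vertex-minor. I start by representing $G$ by a simple chord diagram $\cC$ with $n$ chords on the unit circle, and the strategy is to augment $\cC$ with $2n$ auxiliary chords to produce a chord diagram $\cC^+$ on $3n$ chords whose circle graph is a permutation graph.

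First I fix two points $P, Q$ on the circle dividing it into arcs $\alpha$ and $\beta$, avoiding the endpoints of $\cC$. For each chord $c$ of $\cC$, I introduce two auxiliary chords with endpoints placed near $c$'s endpoints. The placement is chosen so that (i) the resulting chord diagram $\cC^+$, possibly after a sequence of local complementations at the auxiliary chords, admits a splitting in which each of the $3n$ chords has exactly one endpoint in each of the two arcs --- making its circle graph a permutation graph --- and (ii) deleting the $2n$ auxiliary chords from this configuration yields a chord diagram locally equivalent to $\cC$, thereby recovering $G$.

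For chords of $\cC$ already split between $\alpha$ and $\beta$, the auxiliary chords can be placed as non-interfering parallel companions that do not affect the split. For chords of $\cC$ with both endpoints in the same arc, the auxiliary chords must be placed so that local complementation at them --- which flips the order of chord endpoints within one of the two arcs defined by the chord, as described in the preceding paragraph on chord diagram operations --- effectively reroutes one endpoint of the offending chord across the split.

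The main obstacle is the case analysis for auxiliary-chord placement and the corresponding local-complementation sequence. One must verify that for each chord of $\cC$ with both endpoints in one arc, the chosen auxiliary chords enable a local complementation sequence that yields the split form, and that after the sequence is performed, deleting the $2n$ auxiliary chords gives a chord diagram realizing $G$. Since local complementation and deletion are both vertex-minor operations, once the construction and the LC sequence are shown to work, it follows that $G$ is a vertex-minor of the permutation graph represented by (the LC-transform of) $\cC^+$, which has $3n$ vertices as required.
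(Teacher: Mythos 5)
Your high-level strategy is the same one the paper uses: add auxiliary chords to a chord diagram for $G$, use local complementations (relying on the fact that local complementation flips one of the two arcs determined by a chord) to compensate for moving endpoints across a dividing point, and observe that deleting the auxiliary chords recovers $G$ as a vertex-minor. However, your proposal never actually exhibits the construction --- which is the entire content of the lemma. ``The placement is chosen so that\ldots'' and ``one must verify that\ldots'' defer exactly the step that needs to be done. As written, this is a plan for a proof, not a proof.

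Beyond that, your one-shot framing (add all $2n$ auxiliary chords up front, then perform a single local-complementation sequence) creates a difficulty the paper's iterative version avoids. Local complementation at a chord reflects an entire arc of the circle, so it is not a local operation: when you complement at an auxiliary chord placed to fix one non-crossing chord of $\cC$, you displace the endpoints of many other chords, including other non-crossing chords and their auxiliaries. Establishing that a single global sequence of LCs simultaneously fixes every non-crossing chord without spoiling the others would require a careful ordering argument that you have not supplied. The paper sidesteps this entirely: it performs a single reduction step (pick one non-crossing chord $v=[a,b]$ with $a,b$ in the distinguished arc, pick a point $c$ outside, add two parallel chords $x,y$ hugging the segment $[b,c]$, and replace $v$ by $[a,c]$), verifies directly that the old intersection graph is obtained from the new one by $*x*y$ followed by deleting $x,y$, checks that the number of non-crossing chords strictly decreases, and then simply iterates. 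Each iteration is independently checked and only touches one chord, so no interaction argument is needed. I would recommend restructuring your argument as such an induction on the number of non-crossing chords and spelling out a single reduction step explicitly.
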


\begin{proof}
\begin{figure}
\centering
\begin{tikzpicture}[scale = 1.5, every node/.style={inner sep=0,outer sep=2, fill=none}]
\def \anga {130} 
\def \angb {230} 
\def \angc {0} 
\def \far {11} 
\draw[thick] (0,0) circle (2) {};
\draw[ultra thick] (90:2) arc (90:270:2);
\node[label=left:{$A$}, draw] at (180:2) {};

\node[label=above left:{$a$}, draw] (a) at (\anga:2) {};
\filldraw (a) circle (2pt);
\node[label=below left:{$b$}, draw] (b) at (\angb:2) {};
\filldraw (b) circle (2pt);
\node[label=right:{$c$}, draw] (c) at (\angc:2) {};
\filldraw (c) circle (2pt);

\draw (a) -- (b) node[midway,label=left:{$v$}]{};
\draw (\angb-\far:2) -- (\angc+\far:2) node[midway,label=above:{$x$}]{};
\draw (\angb+\far:2) -- (\angc-\far:2) node[midway,label=below:{$y$}]{};
\end{tikzpicture}
\caption{The chord diagram $\cC_1$}
\label{fig:vm-circle}
\end{figure}

Consider a simple chord diagram $\cC$ for a circle graph $G$ and let $A$ be an arc of the unit circle
whose ends are disjoint from $\cC$. A chord is {\em crossing} if it has exactly one end in $A$. 
We may assume that there exist non-crossing chords in $\cC$ since otherwise 
$G$ is itself a permutation graph and the result follows easily. 
We will construct a chord diagram $\cC_2$ such that: 
\begin{itemize}
\item[$(i)$]  $|\cC_2| = |\cC|+2$,
\item[$(ii)$] $\cC_2$ has fewer non-crossing chords, and
\item[$(iii)$] the intersection graph of $\cC_2$ contains $G$ as a vertex-minor.
\end{itemize}
The result follows by iterated applications of this construction. 

Let $v\in \cC$ be a non-crossing chord with ends $a$ and $b$; we may assume that $a,b\in A$. Now select a point $c$ on the unit circle disjoint from $A$ and disjoint from $\cC$. Let $\cC_1$ be obtained from $\cC$ by adding two parallel chords $x$ and $y$ immediately on either side of the chord $[b,c]$, and let $\cC_2$ be obtained from $\cC_1$ by replacing the chord $v$ with the chord $[a,c]$. See Figure~\ref{fig:vm-circle}. Clearly $\cC_2$ satisfies $(i)$ and $(ii)$. Let $G_1$ and $G_2$ denote the intersection graphs of $\cC_1$ and $\cC_2$ respectively. Then $G_1$ is isomorphic to $G_2*x*y$ and $G$ is an induced subgraph of $G_1$. Thus $(iii)$ holds, as required.
\end{proof}

\section{Connectivity}
\label{sec:connectivity}
In this section we review connectivity for vertex-minors and prove our ``Disentangling Lemma'', Lemma~\ref{shortPath}.

Let $S$ and $T$ be disjoint sets of vertices in a graph $G$ and let $A$ denote the adjacency matrix of $G$. The \textit{local connectivity of $S$ and $T$}, denoted by $\sqcap_G(S,T)$ (or simply $\sqcap(S,T)$), is the rank over the binary field of the submatrix $A[S,T]$. Notice that,
if $(S_1,\ldots,S_s)$ is a partition of $S$ and $(T_1,\ldots,T_t)$ is a partition of $T$, then
$$ \sqcap(S,T) \le \sum_{i=1}^s\sum_{j=1}^t \sqcap(S_i,T_j);$$
we refer to this property as {\em sub-additivity}. Moreover, since a rank-$k$ binary matrix has at most $2^k$ distinct columns, 
vertices in $S$ have at most $2^{\sqcap(S,T)}$ distinct neighbour sets in $T$. 

For a set $T\subseteq V(G)$, let $M_T$ denote the binary matroid represented by the submatrix $A[T, V\setminus T]$. Thus the ground set of $M_T$ is $V(G)\setminus T$ and a set $X\subseteq E(M_T)$ has rank $\sqcap(T,X)$. So a set $I\subseteq E(M_T)$ is independent 
if $|I| = \sqcap(T,I)$; we refer to the independent sets of $M_T$ as {\em $T$-independent} sets.

The \textit{connectivity between $S$ and $T$}, denoted by $\kappa_G(S,T)$ (or just $\kappa(S,T)$ when $G$ is clear from context), is the minimum of $\rho_G(X)$ over all sets $X \subseteq V(G)$ so that $S \subseteq X\subseteq V(G)\setminus T$.  Notice that if $G$ is $(m,f)$-connected, $t<m$, and both $S$ and $T$ have cardinality greater than $f(t)$, then $\kappa_G(S,T)>t$. The following is a version of Menger's Theorem for pivot-minors due to Oum~\cite{RWAndVM}; in essence the result states that two of the three ways of removing a vertex will preserve the connectivity between a pair of disjoint sets.

\begin{theorem}\cite[Lemma~4.4]{RWAndVM}
\label{lemma:waysToDelete}
Let $S$ and $T$ be disjoint sets of vertices in a graph $G$.
For every  $v\in V(G)\setminus (S \cup T)$ and $u \in N(v)$,
\begin{align*}
\kappa_G(S,T) &=\max \left(\kappa_{G-v}(S,T), \kappa_{(G*v)-v}(S,T) \right)\\
	&= \max \left(\kappa_{G-v}(S,T), \kappa_{(G \times uv)-v}(S,T) \right)\\
    &= \max \left(\kappa_{(G*v)-v}(S,T), \kappa_{(G \times uv)-v}(S,T) \right).
\end{align*}
\end{theorem}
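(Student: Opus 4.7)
The plan is to prove all three displayed equalities at once by establishing a single stronger claim: at least two of the three quantities $\kappa_{G-v}(S,T)$, $\kappa_{(G*v)-v}(S,T)$, and $\kappa_{(G\times uv)-v}(S,T)$ equal $\kappa_G(S,T)$. This immediately implies each of the three maximum equalities.

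For the upper bound, choose $X^*$ with $S\subseteq X^*\subseteq V(G)\setminus T$ achieving $\rho_G(X^*) = \kappa_G(S,T)$, and take $Y = X^*\setminus\{v\}$ as a candidate set in each of $G-v$, $(G*v)-v$, and $(G\times uv)-v$. By Lemma~\ref{lemma:cutrank}, $\rho_{G_0}(X^*) = \rho_G(X^*)$ for each $G_0 \in \{G,\, G*v,\, G\times uv\}$, and passing from $G_0$ to $G_0 - v$ deletes either a row or a column from the submatrix $A_{G_0}[X^*, V(G)\setminus X^*]$ and cannot increase its rank. Thus each of the three quantities is at most $\kappa_G(S,T)$.

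For the lower bound, by the symmetry $\rho(X) = \rho(V(G)\setminus X)$ one may assume $v\in X^*$, and in each case we want to know whether the rank of $A_{G_0}[X^*, V(G)\setminus X^*]$ is preserved after deleting row $v$. Write $r_v = A_G[v, V(G)\setminus X^*]$, $r_u = A_G[u, V(G)\setminus X^*]$, and $q_v = A_G[X^*\setminus\{v\}, v]$. The definition of local complementation shows that row $v$ of $A_{G*v}[X^*, V(G)\setminus X^*]$ is still $r_v$, while every other row is shifted by the rank-one outer product $q_v(x)\, r_v$. Viewing pivoting as the Schur complement on the $\{u,v\}\times\{u,v\}$ block of the adjacency matrix (which is how Lemma~\ref{lemma:pivot} can be unpacked), and accounting for the label swap of $u$ and $v$, one computes that row $v$ of $A_{G\times uv}[X^*, V(G)\setminus X^*]$ becomes the restriction of $r_u$, while the remaining rows undergo a rank-two perturbation built from $r_v$ and $r_u$ with coefficients coming from $q_u$ and $q_v$.

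With these explicit descriptions in hand, the theorem reduces to a self-contained linear-algebra statement over $\mathrm{GF}(2)$: it is impossible for all three of the corresponding dependence conditions to fail simultaneously. I expect this algebraic incompatibility to be the main obstacle. It splits into two subcases based on whether $u\in X^*$ or $u\in V(G)\setminus X^*$, and inside each subcase the check is elementary but delicate, the key point being that the three spans involved differ from $\mathrm{span}\{r_x : x\in X^*\setminus\{v\}\}$ only by rank-one or rank-two modifications whose destructive effects on the dependence of $r_v$ (respectively $r_u$) are not independently realisable. Once this verification is carried out, the three equalities follow.
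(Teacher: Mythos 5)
Your proposal diverges sharply from the paper, which does not prove this statement from scratch at all: the first two equalities are cited directly from Oum~\cite[Lemma~4.4]{RWAndVM}, and the third is derived from the first two by locally complementing at $v$ and invoking Lemma~\ref{lemma:vm}. So any correct from-scratch argument would be a genuine alternative — but yours has a fundamental gap, and it is in the direction of the inequality, not in the linear algebra you defer.

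Your upper bound is fine: taking $X^*\setminus\{v\}$ as a candidate set in each modified graph shows $\kappa_{G_0-v}(S,T)\le\kappa_G(S,T)$ for $G_0\in\{G,\,G*v,\,G\times uv\}$. But your lower-bound argument continues to analyze the \emph{same} set $X^*$. You ask whether $\mathrm{rank}\,A_{G_0}[X^*\setminus\{v\},\,V(G)\setminus X^*]$ equals $\mathrm{rank}\,A_{G_0}[X^*,\,V(G)\setminus X^*]$. If it does, you have shown that the particular set $X^*\setminus\{v\}$ witnesses cut-rank $\kappa_G(S,T)$ in $G_0-v$ — but that is a statement about one feasible $Y$, and since $\kappa_{G_0-v}(S,T)$ is a \emph{minimum} over all $Y$ with $S\subseteq Y\subseteq V(G)\setminus(T\cup\{v\})$, exhibiting one $Y$ of cut-rank $\kappa_G(S,T)$ only re-proves the upper bound. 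To establish $\kappa_{G_0-v}(S,T)\ge\kappa_G(S,T)$ you must show that \emph{no} feasible $Y$ achieves cut-rank below $\kappa_G(S,T)$ in $G_0-v$; the set achieving the minimum in $G_0-v$ may have nothing to do with $X^*$, and for different choices of $G_0$ the offending $Y$ may differ. Your row-deletion analysis at the single set $X^*$ cannot rule this out.

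There is also an internal mismatch: you state the goal as ``at least two of the three quantities equal $\kappa_G(S,T)$,'' but the algebraic claim you aim to verify — ``it is impossible for all three dependence conditions to fail simultaneously'' — would only yield ``at least one.'' Even this weaker conclusion, once correctly phrased as a statement about arbitrary feasible $Y$ rather than about $X^*$, becomes a nontrivial assertion about how $\rho_G(Y)$, $\rho_G(Y\cup\{v\})$, and the three cut-ranks $\rho_{G_0-v}(Y)$ interrelate; that identity is the real content of Oum's lemma and is precisely what your sketch leaves unproved.
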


Oum just stated the first two equalities, but the third follows from the first two by locally complementing at $v$ and then applying Lemma~\ref{lemma:vm} (which states that there are three ways to remove a vertex).

So, as proven by Oum~\cite[Theorem~6.1]{RWAndVM}, if $G$ is a graph and $S, T \subseteq V(G)$ are disjoint sets, then, by repeatedly applying Theorem~\ref{lemma:waysToDelete}, there is a pivot-minor $\tilde{G}$ of $G$ with $V(\tilde{G})=S \cup T$ so that $\sqcap_{\tilde{G}}(S,T) = \kappa_G(S,T)$. Taking a pivot-minor may change the edges inside $S \cup T$; that is, the graphs $\tilde{G}[S \cup T]$ and $G[S \cup T]$ may be different. The goal of our Disentangling Lemma is to still make the connectivity between $S$ and $T$ ``somewhat local'' but without changing the subgraph induced on $S \cup T$. The following definition formalizes what we mean by ``somewhat local''.

\begin{definition}[$k$-link]
\label{klink}
For a graph $G$ with disjoint $S, T \subseteq V(G)$, a $k$-{\em link for $(S,T)$} is a pair $(X_1, X_2)$ of $k$-element subsets of $V(G) \setminus (S \cup T)$ such that $X_1$ is $S$-independent, $X_2$ is $T$-independent, and either 
\begin{enumerate}
\item $X_1 = X_2$, or
\item $X_1$ and $X_2$ are disjoint, $\sqcap(X_1, X_2)=k$, all vertices in $X_1$ have the same set of neighbours in $T$, and all vertices in $X_2$ have the same set of neighbours in $S$.
\end{enumerate}
\end{definition}

We do not explicitly use the fact in the paper, but 
the motivation for $k$-links is that they certify high connectivity between $S$ and $T$; indeed,
if there exists a $k$-link for $(S,T)$, then $\kappa(S,T)\ge \frac 1 3 k$.
The Disentangling Lemma says that, if $\kappa(S,T)\gg k$, we can find a 
$k$-link in a locally equivalent graph without changing the induced subgraph on $S\cup T$. We would like to point out that the lemma would not hold if ``locally equivalent'' were replaced by ``equivalent up to pivoting''.

\begin{lemma}[Disentangling Lemma]
\label{shortPath}
There is a function $L_{\ref{shortPath}}:\bZ\rightarrow\bZ$ so that, for every positive integer $k$, if $G$ is a graph and  $S,T \subseteq V(G)$ are disjoint sets with $\kappa(S,T)\geq L_{\ref{shortPath}}(k)$ and $\sqcap(S,T)< k$, then there exists a graph $\tilde{G}$ that is locally equivalent to $G$ such that $\tilde{G}[S \cup T]=G[S \cup T]$ and $\tilde{G}$ has a $k$-link for $(S,T)$.
\end{lemma}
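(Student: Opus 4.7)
The plan is to proceed by induction on a notion of ``chain length'' measuring the connectivity between $S$ and $T$, using local complementations to shorten the chain while preserving the induced subgraph on $S\cup T$.

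To set up, I would first show that $\kappa(S,T)\geq L_{\ref{shortPath}}(k)$, combined with submodularity of $\rho$ and an augmenting-path argument in the matroids $M_S$ and $M_T$, produces disjoint sets $X_1,\ldots,X_\ell\subseteq V(G)\setminus(S\cup T)$, each of size $k$, such that $X_1$ is $S$-independent, $X_\ell$ is $T$-independent, and $\sqcap(X_i,X_{i+1})=k$ for every $i$. By taking $L_{\ref{shortPath}}(k)$ large enough, I can further enforce, via pigeonhole on the $2^k$ possible neighborhoods in $S\cup T$, that all vertices in each $X_i$ share the same neighborhood in $S\cup T$.

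If $\ell\leq 2$, the chain is essentially a $k$-link: when $\ell=1$, the set $X_1$ serves as $X_1=X_2$ in Definition~\ref{klink}(1); when $\ell=2$, the constant-neighborhood condition furnished by the pigeonhole step immediately yields a $k$-link of the second type. The heart of the proof is the inductive step, for $\ell\geq 3$. Here I would pick a vertex $v\in X_2$ and perform $G*v$: by definition of local complementation, this toggles edges between $N(v)\cap X_1$ and $N(v)\cap X_3$, producing shortcut edges from $X_1$ to $X_3$. Performing this operation for $k$ carefully chosen vertices of $X_2$ should yield a chain of length $\ell-1$ in the modified graph, after which one can restore the uniform-neighborhood condition and iterate.

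The main obstacle is that $G*v$ also toggles edges between pairs in $N(v)\cap(S\cup T)$, so $G[S\cup T]$ is not automatically preserved. This is where the hypothesis $\sqcap(S,T)<k$ becomes essential: after the initial pigeonhole step, every vertex in $X_2$ shares a common neighborhood $N^*\subseteq S\cup T$, so performing $*v_1$ followed by $*v_2$ at two such vertices toggles each pair inside $N^*$ exactly twice, while the combined effect on the $X_1$-to-$X_3$ interaction remains nontrivial. By therefore always applying local complementations in pairs from $X_2$---and using an extra pair, or a dummy vertex whose effect on $S\cup T$ we analyze by hand, to handle any parity issue---I expect to preserve $G[S\cup T]$ exactly while still shortening the chain. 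The most delicate step will be verifying that a paired sequence of local complementations, rather than a single one, still delivers the rank-$k$ connection between the newly merged levels $X_1$ and $X_3$; this is the combinatorial bookkeeping I anticipate to be the hardest part.
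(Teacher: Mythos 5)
Your approach is genuinely different from the paper's, but it has a concrete gap at the pigeonhole step that I do not see how to repair within your framework.

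You assert that, by taking $L_{\ref{shortPath}}(k)$ large, you can pigeonhole on the ``$2^k$ possible neighborhoods in $S\cup T$'' to make each $X_i$ neighbourhood-uniform. But the hypothesis $\sqcap(S,T)<k$ bounds only how $S$ sees $T$; it says nothing about how vertices of $V(G)\setminus(S\cup T)$ see $S\cup T$. The number of distinct neighbourhoods that vertices of $X_i$ may have in $S\cup T$ can be as large as $2^{|S|+|T|}$, and there is no a priori bound in terms of $k$. (The bound ``vertices in $S$ have at most $2^{\sqcap(S,T)}$ distinct neighbour sets in $T$'' used in the paper applies to the two fixed sides of a low-rank cut, not to arbitrary external vertices.) The paper's proof of Claim~\ref{neighbourhood} addresses exactly this by invoking the Matroid Intersection Theorem: from the failure of a size-$k$ common independent set in $M_S\setminus T$ and $M_T\setminus S$, it produces a partition $(P,Q)$ of $V(G)\setminus(S\cup T)$ with $\sqcap(S,P)+\sqcap(T,Q)<k$, and \emph{that} bound is what makes the pigeonhole valid (at most $2^{\sqcap(S,P)}2^{\sqcap(T,Q)}\le 2^{k-1}$ equivalence classes). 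Without something of this sort your $X_i$'s cannot be made uniform, and the entire chain-shortening plan loses the structure it relies on.

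Beyond that, two more structural concerns are worth flagging. First, the existence of your chain $X_1,\dots,X_\ell$ with $\sqcap(X_i,X_{i+1})=k$ from high $\kappa(S,T)$ is not a routine consequence of submodularity or matroid augmenting paths; producing such a ``ladder'' is itself a nontrivial step and is not justified. Second, the inductive chain-shortening via paired local complementations is heuristically reasonable for preserving $G[S\cup T]$ when the two complemented vertices are non-adjacent and share the same neighbourhood in $S\cup T$, but you have not shown that the operation maintains full-rank connection between the merged levels, and your own write-up concedes this is ``the hardest part.'' The paper sidesteps all of these difficulties by a minimal-counterexample argument: it uses Theorem~\ref{lemma:waysToDelete} to show (Claim~\ref{sameNeighbourhood}) that in a vertex-minimal counterexample no two vertices outside $S\cup T$ can share a neighbourhood in $S\cup T$, and then derives the $k$-link directly from the matroid-intersection partition without any chain-shortening induction. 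If you want to salvage a direct argument, you would essentially need to re-derive the matroid-intersection partition and a uniqueness-of-neighbourhoods statement, at which point you are reproducing the paper's structure.
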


\begin{proof}Fix a positive integer $k$. Define $k_0 \coloneqq 2^{k-1}+1$ and
$$
L_{\ref{shortPath}}(k)\coloneqq 2^{k + k_0-2}  +2k-1.
$$
Suppose that the lemma fails for this function, and choose a counterexample $(G,S,T)$ with $|V(G)|$ minimum. 
We begin with two claims.

\begin{claim}
\label{sameNeighbourhood}
No two vertices in $V(G)\setminus (S \cup T)$ have the same set of neighbours in $S \cup T$.
\end{claim}
\begin{proof}
Suppose that $u,v \in V(G)\setminus (S \cup T)$ have the same set of neighbours in $S \cup T$. 
Consider the case that $uv\not\in E(G)$. Then $G[S\cup T]  = G*v*u[S\cup T]$, so $G[S\cup T]$ is a vertex-minor
of both $G-v$ and $(G*v)-v$. However, by Theorem~\ref{lemma:waysToDelete}, either
$\kappa_{G-v}(S,T)=\kappa_G(S,T)$ or $\kappa_{(G*v)-v}(S,T)=\kappa_G(S,T)$, contradicting the minimality of $G$. 
In the case that $uv\in E(G)$, we see that
$G[S\cup T]$ is a vertex-minor of both $G-v$ and $(G\times uv)-v$ and again we get a contradiction via Theorem~\ref{lemma:waysToDelete}.
\end{proof}

\begin{claim}
\label{neighbourhood}
There exist disjoint sets $Y_1, Y_2 \subseteq V(G)\setminus (S \cup T)$ so that
\begin{itemize} 
\item[$(i)$] $\sqcap(Y_1, Y_2)=|Y_1|=|Y_2|>2^{k_0-1}$, 
\item[$(ii)$] all vertices in $Y_1$ have the same set of neighbours in $T$, and 
\item[$(iii)$] all vertices in $Y_2$ have the same set of neighbours in $S$.
\end{itemize}
\end{claim}
\begin{proof}
Note that, if $X\subseteq V(G)\setminus(S\cup T)$ is a common independent set of $M_S\del T$ and $M_T\del S$ with cardinality $k$, then 
$(X,X)$ is a $k$-link for $(S,T)$; however, there is no such $k$-link, and hence $M_S\del T$ and $M_T\del S$
do not have a common independent set of size $k$.
So, by the Matroid Intersection Theorem,  there is a partition $(P,Q)$ of $V(G) \setminus (S \cup T)$ so that $\sqcap(S,P)+\sqcap(T,Q)<k$. 

Let $(P_1,\ldots,P_s)$ be the partition of $P$ into equivalence classes of identical columns of $A[S,P]$ and let
$(Q_1,\ldots,Q_t)$ be the partition of $Q$ into equivalence classes of identical columns of $A[T,Q]$. 
Since $\sqcap(S,P)+\sqcap(T,Q)\le k-1$, we have $st\le 2^{\sqcap(S,P)}2^{\sqcap(T,Q)}\le 2^{k-1}$. 

Note that $\sqcap(S\cup Q,\, T\cup P) \ge \kappa(S,T) \ge L_{\ref{shortPath}}(k) \ge s t 2^{k_0-1} +2k-1$.
Moreover, by sub-additivity and since $\sqcap(S,T)\le k-1$, 
\begin{eqnarray*}
\sqcap(Q,P) &\ge& \sqcap(S\cup Q,\, T\cup P) - \sqcap(S,T) - \sqcap(S,P) - \sqcap(Q,T) \\
&\ge& \sqcap(S\cup Q,\, T\cup P) - 2k+2 \\
&>& s t 2^{k_0-1}.
\end{eqnarray*}
So, again using sub-additivity, there exist $i\in\{1,\ldots,t\}$ and $j\in \{1,\ldots,s\}$ such that
$\sqcap(Q_i,P_j) > 2^{k_0-1}$. Now choose $Y_1\subseteq Q_i$ and $Y_2\subseteq P_j$ such that
$\sqcap(Y_1, Y_2)=|Y_1|=|Y_2|> 2^{k_0-1}$. Now it is straightforward to see that $(Y_1, Y_2)$ satisfies 
$(i)$, $(ii)$, and $(iii)$, as required.
\end{proof}

By Claim~\ref{sameNeighbourhood} and part $(ii)$ of Claim~\ref{neighbourhood}, no two vertices in $Y_1$ have the same set of neighbours in $S$. Then, since $|Y_1| > 2^{k_0-1}$, we have $\sqcap(Y_1,S) \ge k_0$. Let $Y'_1\subseteq Y_1$ be 
a $k_0$-element $S$-independent set. Since $Y_1$ is $Y_2$-independent, $Y'_1$ is also $Y_2$-independent.
So there is a $k_0$-element subset $Y'_2\subseteq Y_2$ that is $Y'_1$-independent. Now $|Y_2'|> 2^{k-1}$ so, by similar reasoning,
there exist a $k$-element subset $X_2\subseteq Y'_2$ that is $T$-independent and a $k$-element subset $X_1\subseteq Y'_1$ 
that is $X_2$-independent. Then $(X_1,X_2)$ is a $k$-link for $(S,T)$, a contradiction.
\end{proof}

\section{Ramsey theory}
\label{sec:Ramsey}
The rest of this paper is dedicated to proving Theorem~\ref{mainGrid}, that every graph of sufficiently large rank-width has a vertex-minor isomorphic to the $n \times n$ comparability grid. For the proof it is convenient to work with graphs whose vertices are ordered.

An \textit{ordered set} is a sequence $X=(x_1, x_2, \ldots, x_k)$ with no repeated elements. A \textit{subset} of an ordered set $X$ is a subsequence of $X$. For the rest of this paper, the vertex set of every graph $G$ is an ordered set, and every set of vertices of $G$ is considered as an ordered subset of $V(G)$. Oftentimes this will not matter, but it will matter when we discuss disjoint sets $X$ and $Y$ of $V(G)$ which are ``coupled''. This will mean that the bipartite subgraph of $G$ which is induced between $X$ and $Y$ is one of a few specific graphs, like a perfect matching, where the vertices which are paired in the matching (for instance) are determined by the orderings of $X$ and $Y$. So we use the ordering of $V(G)$ to induce fixed orderings on subsets which do not depend on the particular coupled pair under consideration. We sometimes remind the reader of these conventions by writing that the ordering of $X$ is induced by the ordering of $V(G)$.

Furthermore, if $H$ is a subgraph of $G$, we mean that $V(H)$ is a subset of $V(G)$ as ordered sets. Two graphs are \textit{isomorphic} if they are isomorphic as graphs with unordered vertex sets. For each positive integer $n$, we fix a lexicographic ordering on the vertex set of the $n \times n$ comparability grid. The rest of the paper is dedicated to proving Theorem~\ref{mainGrid} with these conventions; this is easily seen to be equivalent to the original statement. 

Suppose $X$ and $Y$ are disjoint ordered sets of cardinality $k$ and $X' \subseteq X$. We write $\phiset{X}{Y}{X'}$ for the subset of $Y$ induced by the ordering of $X'$ with respect to $X$. That is, if $X=(x_1, x_2, \ldots, x_k)$, $Y=(y_1, y_2, \ldots, y_k)$, and $X'=(x_{i_1}, x_{i_2}, \ldots, x_{i_{k'}})$, then $\phiset{X}{Y}{X'} = (y_{i_1}, y_{i_2}, \allowbreak\ldots, y_{i_{k'}})$. We also write $\phiset{X}{X}{X'}$ for the set $X'$ itself.

In this section we review some Ramsey theory for graphs with ordered vertex sets.

For a graph $G$ with disjoint sets $X,Y \subseteq V(G)$, we say $X$ and $Y$ are \textit{anticomplete} if $G$ has no edges with one end in $X$ and one end in $Y$, and \textit{complete} if for all $x \in X$ and $y \in Y$, $xy \in E(G)$. We say $X$ and $Y$ are \textit{homogeneous} if they are either complete or anticomplete.

\begin{definition}
Let $X=(x_1, x_2, \ldots, x_k)$ and $Y=(y_1, y_2, \ldots, y_k)$ be disjoint sets of vertices in a graph $G$ with orderings induced by the ordering of $V(G)$. We say $(X,Y)$ is:
\begin{enumerate}
\item a {\em coupled matching} if  $N(x_i)\cap Y = (y_{i})$ for every $i \in \{1,2,\ldots, k\}$,
\item an {\em up-coupled half graph} if  $N(x_i)\cap Y = \allowbreak (y_{i}, y_{i+1}, \ldots, y_{k})$ for every $i \in \{1,2,\ldots, k\}$, and
\item a {\em down-coupled half graph} if  $N(x_{i})\cap Y = (y_{1}, y_{2}, \ldots, y_{i})$ for every $i \in \{1,2,\ldots, k\}$.
\end{enumerate}
\end{definition}

We say that $(X,Y)$ is the \textit{complement of a coupled matching} if $(X,Y)$ is a coupled matching in the complement of $G$. Similarly we will talk about the \textit{complement of a down-coupled half graph} and the \textit{complement of an up-coupled half graph}. If $(X,Y)$ is either a down-coupled half graph, an up-coupled half graph, or one of their complements, we say $(X,Y)$ is a \textit{coupled half graph}. If $(X,Y)$ is either a coupled matching, the complement of a coupled matching, or a coupled half graph, we say \textit{$X$ and $Y$ are coupled}. Notice that if $X$ and $Y$ are coupled and $X'\subseteq X$, then $X'$ and $\phiset{X}{Y}{X'}$ are coupled.

If $X$ and $Y$ are disjoint coupled sets in a graph $G$, then $\sqcap(X,Y)\geq |X|-1$. The next result,
due to Ding, Oporowski, Oxley, and Vertigan~\cite{DOVW}, shows that a partial converse holds; namely, that
if $\sqcap(X,Y)\gg k$, then, up to possibly reordering the vertices in $Y$, there are $k$-element subsets of $X$ and 
$Y$ that are coupled.

\begin{lemma}\cite[Theorem 2.3]{DOVW}
\label{cpl}
There is a function $R_{\ref{cpl}}:\bZ\rightarrow\bZ$ so that, for every positive integer $k$, if $G$ is a graph and  $X, Y \subseteq V(G)$ are disjoint sets with $\sqcap(X,Y)\geq R_{\ref{cpl}}(k)$, then there exist $k$-element subsets $X' \subseteq X$  and $Y'\subseteq Y$ that are coupled in a graph obtained from $G$ by reordering the vertices in $Y$.
\end{lemma}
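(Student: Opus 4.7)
My plan is to reduce to analyzing a large invertible $\mathbb F_2$-submatrix of the biadjacency matrix between $X$ and $Y$ whose rows retain their induced $V(G)$-order, and then apply Ramsey's theorem to extract one of the six coupled configurations.

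First I set $R_0$ to be an iterated $4$-color Ramsey number of $k$, to be specified below, and put $R_{\ref{cpl}}(k) \coloneqq R_0$. Writing $A = A[X, Y]$ for the biadjacency matrix, I have $\mathrm{rank}(A) = \sqcap(X, Y) \geq R_0$; processing the rows of $A$ in induced $X$-order and keeping each row that increases the rank yields $X_0 = (x_1, \ldots, x_{R_0})$ whose rows are linearly independent over $\mathbb F_2$. Then $A[X_0, Y]$ has full row rank $R_0$, so there is $Y_0 \subseteq Y$ with $|Y_0| = R_0$ such that $M \coloneqq A[X_0, Y_0]$ is invertible. Since $\det M \neq 0$, some permutation $\sigma$ satisfies $M[x_i, y_{\sigma(i)}] = 1$ for every $i$; reordering $Y_0$, I may assume $\sigma = \mathrm{id}$, so that the diagonal of $M$ is all ones.

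Next I color each pair $\{i, j\}$ of $\{1, \ldots, R_0\}$ with $i < j$ by the tuple $(M[x_i, y_j], M[x_j, y_i]) \in \{0, 1\}^2$, and invoke the $4$-color Ramsey theorem to obtain a monochromatic $k$-set $I$. The colors $(0,0)$, $(1,0)$, $(0,1)$ produce, respectively, an identity, an upper triangular all-ones, or a lower triangular all-ones submatrix $M[I, I]$ (with rows and columns in induced order), yielding a coupled matching, an up-coupled half graph, or a down-coupled half graph.

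The hard case is color $(1, 1)$, which gives $M[I, I]$ all-ones---not a coupled configuration. To handle it, I exploit the fact that the rows indexed by $I$ are linearly independent in $\mathbb F_2^{Y_0}$ but coincide on the columns of $I$, so their restrictions to $Y_0 \setminus I$ remain linearly independent; I extract an $|I| \times |I|$ invertible submatrix of $M[I, Y_0 \setminus I]$ and iterate the preceding Ramsey argument. Each iteration either produces a coupled $k$-set or recurses on a strictly smaller invertible submatrix, and choosing $R_0$ as a tower of $4$-color Ramsey numbers of height $O(k)$ ensures termination. The principal technical point is precisely this iteration: reordering $Y$ cannot turn a complete bipartite subgraph into a coupled configuration, so one cannot avoid the $(1, 1)$ case by a single Ramsey application, and the global invertibility of $M$ must be used to escape it.
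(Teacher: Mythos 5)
This lemma is cited from \cite{DOVW}; the paper itself gives no proof, so there is no internal argument to compare against. Judged on its own terms, your strategy --- extract a full-rank $\mathbb F_2$-submatrix, reorder so the diagonal is all ones, then Ramsey on the $4$-colouring $(M[x_i,y_j],M[x_j,y_i])$ --- is a natural line of attack, and the three colours $(0,0)$, $(1,0)$, $(0,1)$ do produce a coupled matching, an up-coupled half graph, and a down-coupled half graph respectively. You are also right that $(1,1)$ is the genuine obstruction: an all-ones square is a complete bipartite pair, which is homogeneous rather than coupled, and this is exactly the configuration the lemma must rule out.

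The difficulty is that your escape from the $(1,1)$ case is not actually justified. Two issues. First, a small one: rows that agree on the columns of $I$ do \emph{not} necessarily stay linearly independent when restricted to $Y_0 \setminus I$; projecting away coordinates on which a family of independent vectors all agree can drop the rank by exactly one (for instance $r_1=(1,0)$, $r_2=(1,1)$ with $I$ the first coordinate). So the new invertible block has order $|I|-1$, not $|I|$. That is cheaply repaired. The serious issue is termination. You recurse: extract a fresh invertible block on the surviving rows with columns from $Y_0\setminus I$, re-diagonalise, Ramsey again. Nothing prevents the recursion from hitting $(1,1)$ at every level. Each time this happens the ambient row count collapses from $n_\ell$ to roughly $R_4^{-1}(n_\ell)-1$, so after only about $\log^*$ many steps you fall below $R_4(k)$ and can no longer extract a $k$-element monochromatic set at all --- and at that point you have found nothing but a nested stack of all-ones squares. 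The assertion that ``a tower of $4$-colour Ramsey numbers of height $O(k)$ ensures termination'' asserts a depth bound $D(k)$ on how many consecutive $(1,1)$ outcomes are possible, but no such bound is derived; invertibility of $M$ only shows the recursion must \emph{eventually} stop, not that it stops at a good colour, nor within a depth that depends only on $k$. To close the gap one needs an argument that actually exploits full rank to forbid a long chain of nested all-ones blocks --- for example a staircase/Gaussian-elimination construction that directly produces a triangular pattern, or a colouring scheme in which every monochromatic outcome is usable --- rather than blind recursion on a fresh Ramsey instance.
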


\newtheorem*{ramseythm}{Ramsey's Theorem}

We use the following version of Ramsey's Theorem.
\begin{ramseythm}
\label{kRamsey} For each integer $k$, 
there is a function $R_{k}:\bZ\rightarrow\bZ$ so that, for  each positive integer $n$, every $k$-edge-coloured clique on at least $R_{k}(n)$ vertices contains a monochromatic clique of size $n$.
\end{ramseythm}

The following two results are easy applications of Ramsey's Theorem; we omit the proofs.
\begin{lemma}
\label{orderedRamsey}
There is a function $R_{\ref{orderedRamsey}}:\bZ\rightarrow\bZ$ so that, for every positive integer $k$, if $X$ and $Y$ are disjoint sets 
of vertices in a graph $G$ with $|X|=|Y| \geq R_{\ref{orderedRamsey}}(k)$, then there is a $k$-element subset $X'\subseteq X$ such that $X'$ and $\phiset{X}{Y}{X'}$ are either coupled or homogeneous.
\end{lemma}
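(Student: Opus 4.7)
The plan is to produce $X'$ by a two-stage pigeonhole/Ramsey argument that fixes the bipartite structure between $X$ and $Y$ at the corresponding indices. Write $X=(x_1,\ldots,x_n)$ and $Y=(y_1,\ldots,y_n)$ with $n=|X|=|Y|$. For any subset of indices $I\subseteq\{1,\ldots,n\}$, the pair $(X',Y'):=(\phiset{}{X}{I},\phiset{X}{Y}{I})$ is determined by the edges inside $\{x_iy_j:i,j\in I\}$, and these edges split naturally into the ``diagonal'' edges $x_iy_i$ and the ``off-diagonal'' edges $x_iy_j$ with $i\neq j$.

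First I would use pigeonhole on the $2$-colouring $i\mapsto \mathbb{1}[x_iy_i\in E(G)]$ to find a subset $I_1\subseteq\{1,\ldots,n\}$ of size at least $n/2$ on which the diagonal colour $d\in\{0,1\}$ is constant. Second, I would apply Ramsey's theorem to the $4$-colouring of ordered pairs $i<j$ in $I_1$ given by $(\mathbb{1}[x_iy_j\in E(G)],\mathbb{1}[x_jy_i\in E(G)])$; provided $|I_1|\geq R_4(k)$, there is a monochromatic subset $I_2\subseteq I_1$ of size $k$. Taking $R_{\ref{orderedRamsey}}(k):=2R_4(k)$ and setting $X'=\phiset{}{X}{I_2}$ (inheriting the order from $V(G)$) accomplishes both reductions.

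It then remains to check that each of the $2\times 4=8$ possible combinations of the diagonal colour $d$ and the off-diagonal colour $(a,b)$ produces either a coupled or homogeneous pair. The four ``constant'' combinations $(d,a,b)\in\{(1,1,1),(0,0,0),(1,0,0),(0,1,1)\}$ give, respectively, complete, anticomplete, coupled matching, and complement of coupled matching. The remaining four combinations have $a\neq b$: for instance $(d,a,b)=(1,1,0)$ yields $N(x_i)\cap Y'=\{y_j:j\geq i,\ j\in I_2\}$, an up-coupled half graph, and the other three give the down-coupled half graph and the two complements of coupled half graphs. So in every case $(X',Y')$ is coupled or homogeneous.

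There is no real obstacle; the only point requiring care is making sure the $4$-colouring is applied to \emph{ordered} pairs so that the asymmetric ``half graph'' cases $a\neq b$ are captured correctly, and then verifying exhaustively that all eight resulting patterns fall into one of the allowed types of coupling.
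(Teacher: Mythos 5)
The paper explicitly omits a proof of this lemma (``The following two results are easy applications of Ramsey's Theorem; we omit the proofs''), so there is no official argument to compare against. Your two-stage argument --- first a pigeonhole step to make the diagonal colour $\mathbb{1}[x_iy_i \in E(G)]$ constant on a subset $I_1$ of size at least $R_4(k)$, then Ramsey's Theorem on the $4$-colouring $\{i,j\} \mapsto \bigl(\mathbb{1}[x_iy_j \in E(G)],\ \mathbb{1}[x_jy_i \in E(G)]\bigr)$ for $i<j$ --- is correct, and the eight resulting patterns $(d,a,b)$ do exhaust exactly the allowed outcomes: $(1,1,1)$ complete, $(0,0,0)$ anticomplete, $(1,0,0)$ coupled matching, $(0,1,1)$ its complement, $(1,1,0)$ up-coupled half graph, $(1,0,1)$ down-coupled half graph, $(0,1,0)$ complement of a down-coupled half graph, $(0,0,1)$ complement of an up-coupled half graph. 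The only points that need care, and which you handle correctly, are that the colour of an unordered pair is determined by its ordered representative $i<j$ (so the monochromatic set really does give a uniform asymmetric pattern), and that since $I_2$ is taken in increasing order, both $X'$ and $\phiset{X}{Y}{X'}$ inherit the $V(G)$-ordering as required by the definition of coupled. With $R_{\ref{orderedRamsey}}(k) := 2R_4(k)$ the bound also works out. This is precisely the kind of direct Ramsey application the authors had in mind.
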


\begin{lemma}
\label{bipRamsey}
There is a function $R_{\ref{bipRamsey}}:\bZ\rightarrow\bZ$ so that, for every positive integer $k$, if $X$ and $Y$ are disjoint sets
of vertices in a graph $G$ with $|X|,|Y|\ge R_{\ref{bipRamsey}}(k)$, then there exist $k$-element sets $X' \subseteq X$ and $Y' \subseteq Y$ such that $X'$ and $Y'$ are homogeneous.
\end{lemma}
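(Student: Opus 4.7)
The plan is to use a standard iterative purification argument to establish the lemma with, say, $R_{\ref{bipRamsey}}(k) \coloneqq k\cdot 2^{2k}$. The idea is to process the vertices of $X$ one at a time, and at each step halve the surviving portion of $Y$ so that the chosen vertex is either complete or anticomplete to everything that remains.

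More precisely, I would set $Y_0 \coloneqq Y$ and then iteratively select distinct vertices $x_1,x_2,\ldots,x_{2k}$ from $X$ together with nested subsets $Y_0 \supseteq Y_1 \supseteq \cdots \supseteq Y_{2k}$ of $Y$ as follows. Having chosen $x_1,\ldots,x_{i-1}$ and $Y_{i-1}$, pick any previously-unchosen $x_i \in X$ and partition $Y_{i-1}$ into $N(x_i)\cap Y_{i-1}$ and $Y_{i-1}\setminus N(x_i)$. Let $Y_i$ be whichever of these two sets is larger (breaking ties arbitrarily), and record $c_i = 1$ if $x_i$ is complete to $Y_i$ and $c_i = 0$ otherwise. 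Since $|Y_i|\geq |Y_{i-1}|/2$, after $2k$ steps we have $|Y_{2k}|\geq |Y|/2^{2k}\geq k$, and the process does not run out of vertices in $X$ because $|X|\geq 2k$.

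By the pigeonhole principle, at least $k$ of the colours $c_1,\ldots,c_{2k}$ coincide; let $X'\subseteq X$ be a $k$-element subset corresponding to these indices. By construction, if $c_i = 1$ then $x_i$ is complete to $Y_i$, and $Y_{2k}\subseteq Y_i$, so every vertex of $X'$ is complete (respectively anticomplete) to $Y_{2k}$. Choosing any $k$-element subset $Y'\subseteq Y_{2k}$ yields homogeneous sets $X'$ and $Y'$, as desired.

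This proof is routine and I do not anticipate any real obstacle; the argument is essentially the bipartite analogue of the standard greedy proof of Ramsey's Theorem, and only pigeonhole and a geometric-decay estimate are needed.
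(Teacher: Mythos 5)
Your proof is correct. The paper states Lemma~\ref{bipRamsey} (together with Lemma~\ref{orderedRamsey}) as ``easy applications of Ramsey's Theorem'' and explicitly omits the proof, so there is no paper argument to compare against. Your greedy purification argument — repeatedly halving $Y$ against the neighbourhood of a fresh vertex of $X$, then applying the pigeonhole principle to the recorded ``complete/anticomplete'' labels — is the standard direct proof of bipartite Ramsey and gives the explicit bound $R_{\ref{bipRamsey}}(k)=k\cdot 2^{2k}$. It is, if anything, slightly more elementary than invoking Ramsey's Theorem as a black box, and the quantitative bound is perfectly adequate for the paper's purposes. The small details all check: $k\cdot 2^{2k}\ge 2k$ so $X$ does not run out of vertices, $|Y_{2k}|\ge|Y|/2^{2k}\ge k$, and nesting of the $Y_i$ ensures that the chosen $k$ vertices of $X$ are uniformly complete or anticomplete to $Y_{2k}$.
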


For a function $R:\bZ\rightarrow\bZ$ and an integer $n>1$, we inductively define $R^{(n)}$ to be the function $R\circ R^{(n-1)}$, where for the base case $R^{(1)}=R$.

\section{Building a constellation}
\label{sec:bc}
Roughly speaking, a ``large constellation'' in a graph is an induced subgraph consisting of many large stars coupled together in a ``connected way". The proof of Theorem~\ref{mainGrid} then consists of two parts; in this section we prove that, up to local equivalence and reordering vertices, every graph of sufficiently large rank-width contains a large constellation. In the next section we prove that every graph containing a sufficiently large constellation has a vertex-minor isomorphic to the $n \times n$ comparability grid. 

Recall that a \textit{coclique} is a set of pairwise non-adjacent vertices.

\begin{definition}[Constellations] Let $G$ be a graph, let $n$ and $k$ be positive integers, and let $m$ be a non-negative integer. An {\em $(n,m,k)$-constellation in $G$} is a tuple $(H, (W_h:h \in H), K)$ such that
\begin{enumerate}
\item $H \subseteq V(G)$ is an $(n+m)$-vertex coclique,
\item the sets $(W_h:h \in H)$ are disjoint $k$-vertex cocliques in $G\del H$, with orderings induced by the ordering of $V(G)$,
\item $K$ is a connected $n$-vertex graph with  $V(K)\subseteq H$,
\item for every $h \in H$, the set $W_h$ is complete to $\{h\}$ and anticomplete to $H \setminus \{h\}$,
\item for distinct $u,v\in H$, the pair $(W_u,W_v)$ is either a coupled half graph or a coupled matching
if $uv\in E(K)$, and is anticomplete otherwise.
\end{enumerate}
\end{definition}
If $\C=(H, (W_h:h \in H), K)$ is an $(n,m,k)$-constellation in $G$, then
we write $H(\C)$ for $H$, we write $K(\C)$ for $K$, and for each $h \in H$ we write $\W{h}{\C}$ for $W_h$.
We denote the union of the sets $(\{v\}\cup W_v\, : \, v\in H)$ by $V(\C)$, 
we denote the union of the sets $(\{v\}\cup W_v\, : \, v\in V(K))$ by $A(\C)$, and we denote $V(\C)\setminus A(\C)$ by $B(\C)$.
We sometimes use a sequence of constellations $\C_0, \C_1, \ldots$, and in that case we write $\W{h}{0}$ for $\W{h}{\C_0}$, and likewise for $\C_1$, and so on. For $h \in H(\C)$ and $X \subseteq \W{h}{\C}$, we write $\C | X$ for $$\left(H, \left(\phiset{\W{h}{\C}}{\W{z}{\C}}{X}: z \in H \right), K\right).$$ Notice that  $\C|X$ is an $(n,m,|X|)$-constellation in $G$.

This section is devoted to proving that, for positive integers $n$ and $k$, every graph with sufficiently large rank-width contains, up to local equivalence and reordering vertices, an $(n,0,k)$-constellation. To build constellations we use  ``augmentations".
\begin{definition}[Weak augmentations]
For positive integers $n$, $m$, and $k$, a {\em weak $(n,m,k)$-augmentation} in a graph $G$ is a tuple
$(\C,x,y, X_1,X_2)$ such that $\C$ is an $(n,m,k)$-constellation; $x\in V(K(\C))$ and  $y\in H(\C)\setminus V(K(\C))$; and
$(X_1, X_2)$ is a pair of $k$-vertex subsets of $V(G)\setminus (V(\C))$, with orderings induced by the ordering of $V(G)$, such that $\W{x}{\C}$ and $X_1$ are coupled, $\W{y}{\C}$ and $X_2$ are coupled, and either
\begin{enumerate}
    \item $X_1 = X_2$, or
    \item $X_1$ and $X_2$ are disjoint and coupled, all vertices in $X_1$ have the same set of neighbours in $B(\C)$, and all vertices in $X_2$ have the same set of neighbours in $A(\C)$.
\end{enumerate}
\end{definition}

\begin{lemma}
\label{weakaug}
There is a function $k_{\ref{weakaug}}:\bZ^3\rightarrow\bZ$ so that, for all positive integers $n$, $m$, $k_0$, $k_1$, and $k_2$ with
$k_1\ge k_{\ref{weakaug}}(n,m,k_0)$ and $k_2\ge k_1$, if $\C$ is an $(n,m,k_2)$-constellation in a graph $G$ and
$\kappa(A(\C),B(\C))\ge k_1$, then there exists a graph that is equivalent to $G$ up to local complementation and reordering vertices and contains a weak $(n,m,k_0)$-augmentation.
\end{lemma}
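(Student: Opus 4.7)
The starting observation is that within the constellation, the only possible edges between $A(\C)$ and $B(\C)$ would be between $\W{u}{\C}$ and $\W{v}{\C}$ for $u \in V(K)$, $v \in H \setminus V(K)$, but these are anticomplete by the constellation axioms (since $uv \notin E(K)$); combined with $H$ being a coclique, this forces $\sqcap_G(A(\C), B(\C)) = 0$. So the Disentangling Lemma applies for any target link size $k_1'$ as long as $k_1 \ge L_{\ref{shortPath}}(k_1')$, producing a graph $\tilde G$ locally equivalent to $G$ with $\tilde G[A(\C) \cup B(\C)] = G[A(\C) \cup B(\C)]$ (so $\C$ remains an $(n, m, k_2)$-constellation in $\tilde G$) and a $k_1'$-link $(X_1, X_2)$ for $(A(\C), B(\C))$.

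Next, using subadditivity of $\sqcap$ along the partition of $A(\C)$ into the pieces $\{v\} \cup \W{v}{\C}$, together with $A(\C)$-independence of $X_1$, we find $x \in V(K)$ with $\sqcap(\W{x}{\C}, X_1) \ge (k_1' - n)/n$. An application of Lemma~\ref{cpl} to $(\W{x}{\C}, X_1)$, reordering $X_1$, yields a coupled pair $(W^{(x)}, X_1^{(1)})$ of an intermediate size $k_{m_1}$. In Case~$2$ of the $k$-link definition (where $X_1 \neq X_2$ and $\sqcap(X_1, X_2) = k_1'$), subadditivity forces $\sqcap(X_1^{(1)}, X_2) = k_{m_1}$, and a further application of Lemma~\ref{cpl} to $(X_1^{(1)}, X_2)$, reordering $X_2$, produces a coupled pair $(X_1^{(2)}, X_2^{(2)})$; in Case~$1$ (where $X_1 = X_2$) this intermediate step is omitted. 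The resulting $X_2$-side subset (namely $X_2^{(2)}$ in Case~$2$, or $X_1^{(1)}$ in Case~$1$) is still $B(\C)$-independent, so subadditivity yields $y \in H \setminus V(K)$ with $\sqcap(\W{y}{\C}, \cdot)$ large. A final application of Lemma~\ref{cpl}, with the roles of the two sets flipped so that $\W{y}{\C}$ rather than the $X$-subset is reordered, produces a coupled pair $(Y^*, W^{(y)})$ of size $k_0$; reordering $\W{y}{\C}$ is permissible because $y \in H \setminus V(K)$ is incident with no edge of $K$, so every constellation-coupling involving $\W{y}{\C}$ is anticomplete and thus preserved under any reordering.

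Next we build $\C'$ on $H(\C)$ with $K(\C') = K(\C)$. First define the augmentation subsets $Y_1, Y_2$: in Case~$1$, $Y_1 = Y_2 = Y^*$; in Case~$2$, $Y_2 = Y^*$ and $Y_1$ is the sub-subset of $X_1^{(2)}$ at the positions corresponding to $Y^*$ in $X_2^{(2)}$. Let $S \subseteq \W{x}{\C}$ be the sub-subset of $W^{(x)}$ coupled with $Y_1$, obtained by restricting the coupling $(W^{(x)}, X_1^{(1)})$ to the positions of $Y_1$ in $X_1^{(1)}$. Then set $\W{v}{\C'} = \phiset{\W{x}{\C}}{\W{v}{\C}}{S}$ for every $v \in V(K)$, set $\W{y}{\C'} = W^{(y)}$, and for every other $h \in H \setminus V(K)$ let $\W{h}{\C'}$ be any $k_0$-vertex subset of $\W{h}{\C}$. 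The common index $S$ preserves the $K$-edge couplings among the $V(K)$-vertices as sub-couplings, while the fact that each $\W{h}{\C}$ for $h \in H \setminus V(K)$ has only anticomplete constellation-relations with every other $\W{\cdot}{\C}$ means these $\W{h}{\C'}$ can be chosen independently without violating the constellation axioms. The weak augmentation is then $(\C', x, y, Y_1, Y_2)$; the disjoint-coupled-uniform conditions in Case~$2$ of the augmentation are inherited from the $k$-link since subsets preserve uniformity.

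The main obstacle is arranging the various reorderings so that every coupling we build holds simultaneously in the same final graph. At each application of Lemma~\ref{cpl} above, we reorder only a set whose ordering is still free: $X_1$ or $X_2$ (both outside $V(\C)$, hence harmless) or $\W{y}{\C}$ (whose constellation-couplings are all anticomplete, hence invariant under any reordering). Performing the applications in the order described ensures that each successive reordering touches a set whose ordering has not yet been fixed, so all earlier couplings are preserved. A bound of the form $k_{\ref{weakaug}}(n, m, k_0) = L_{\ref{shortPath}}\bigl( n R_{\ref{cpl}}(R_{\ref{cpl}}(m R_{\ref{cpl}}(k_0) + m)) + n \bigr)$ suffices, obtained by tracking the sizes backward through the three $R_{\ref{cpl}}$-inversions.
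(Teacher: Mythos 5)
Your proof is correct and follows essentially the same strategy as the paper's: apply the Disentangling Lemma to get a $t$-link $(X_1,X_2)$ for $(A(\C),B(\C))$ (noting $\sqcap(A(\C),B(\C))=0$ so the lemma applies), pigeonhole to locate $x\in V(K)$ and $y\in H\setminus V(K)$, and then apply Lemma~\ref{cpl} two or three times (depending on whether $X_1=X_2$), reordering only $X_1$, $X_2$, and $W_y$, the last of which is safe precisely because $y\notin V(K)$. The only difference from the paper is cosmetic bookkeeping: you interleave the pigeonhole steps with the Ramsey applications rather than doing both pigeonholes up front, and your explicit bound differs from the paper's $L_{\ref{shortPath}}\bigl(n(mR_{\ref{cpl}}^{(3)}(k_0)+m+1)\bigr)$, but both suffice.
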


\begin{proof} For positive integers $n$, $m$, and $k_0$ we define
\begin{align*}
t &\coloneqq n\left( mR_{\ref{cpl}}^{(3)}\left(k_0\right) + m  +1 \right), \mbox{ and}\\
k_{\ref{weakaug}}(n,m,k_0) &\coloneqq  L_{\ref{shortPath}}(t).
\end{align*}
Now let $k_1$ and $k_2$ be positive integers such that
$$k_2\ge k_1\ge k_{\ref{weakaug}}(n,m,k_0),$$
and let $\C$ be an $(n,m,k_2)$-constellation in a graph $G$ with
$\kappa(A(\C),B(\C))\ge k_1$.  By Lemma~\ref{shortPath},
there is a graph $G_0$ that is locally equivalent to $G$ such that
$G_0[V(\C)]= G [V(\C)]$, and
$G_0$ contains a $t$-link $(X_1, X_2)$ for $(A(\C),\allowbreak B(\C))$.
Up to local equivalence we may assume that $G_0=G$.

By sub-additivity, $\sqcap(A(\C)\setminus H(\C), X_1)\ge t-n$. Let 
$X_0\subseteq A(\C)\setminus H(\C)$ be a $(t-n)$-element $X_1$-independent set.
Now let $t' = mR_{\ref{cpl}}^{(3)}(k_0)+m$. Thus
$$|X_0| = t-n =  nt'.$$
 Thus, by the pigeonhole principle, there exist $x \in V(K(\C))$ and $X_0'\subseteq X_0$ of cardinality $t'$ so that $X_0' \subseteq \W{x}{\C}$. Note that $\sqcap(X_0', X_1)=|X_0'|$. By the definition of $X_1$ and $X_2$, there exist $X_1' \subseteq X_1$ and $X_2'\subseteq X_2$ so that $(X_1', X_2')$ is a $t'$-link for $(X_0', B(\C))$. By the same reasoning, there exist a vertex $y \in H(\C)\setminus V(K(\C))$ and $R_{\ref{cpl}}^{(3)}(k_0)$-vertex subsets $X_3'' \subseteq \W{y}{\C}$,  $X_2'' \subseteq X_2'$, and $X_1'' \subseteq X_1'$ so that $(X_1'', X_2'')$ is a $|X_3''|$-link for $(X_0', X_3'')$. 
 
Next, we apply Lemma~\ref{cpl} to the sets $X_0'$ and $X_1''$ so that, after possibly reordering the vertices in $X_1''$, there exist $R_{\ref{cpl}}^{(2)}(k_0)$-element subsets $Y_0 \subseteq X_0'$ and $Y_1 \subseteq X_1''$ so that $Y_0$ and $Y_1$ are coupled. The claim follows by repeating this process one or two more times depending on whether $X_1 = X_2$, and possibly reordering the vertices in $X_2''$ and $X_3''$. Note that it is fine to reorder vertices in $X_3''$ since $y \in H(\C)\setminus V(K(\C))$.
\end{proof}

When taking restrictions of a weak $(n,m,k)$-augmentation $(\C,x,y,X_1,X_2)$, we need to
respect orders between the sets $X_1$, $X_2$, and $(\W{h}{\C}\, : \, h\in V(K(\C))\cup\{y\})$, 
but not with the sets $(\W{z}{\C}\, : \, z\in H(\C)\setminus (V(K(\C))\cup\{y\}))$. To be more precise, consider
a $k'$-element subset $Y_1\subseteq X_1$.
Let $Y_2\coloneqq \phi_{X_1\rightarrow X_2}(Y_1)$ and let $\C'$ be an $(n,m,k')$-constellation such that
$H(\C')=H(\C)$; for each $h\in V(K(\C))\cup\{y\}$ we have $\W{h}{\C'} =  \phi_{X_1\rightarrow \W{z}{\C}}(Y_1)$;
and for each $z\in H(\C)\setminus (V(K(\C))\cup\{y\})$ the set $\W{z}{\C'}$ is a $k'$-element subset
of $\W{z}{\C}$. Then $(\C', x,y,Y_1,Y_2)$ is an $(n,m,k')$-augmentation.

\begin{definition}[Augmentations]
For positive integers $n$, $m$, and $k$, 
an {\em $(n,m,k)$-augmentation} is a weak $(n,m,k)$-augmentation $(\C,x,y,X_1,X_2)$ such that
for each $i\in \{1,2\}$:
\begin{enumerate}
\item $X_i$ is either a clique or a coclique,
\item for all $h \in V(K(\C))\cup\{y\}$, the sets $\W{h}{\C}$ and $X_i$ are either homogeneous or coupled, with orderings induced by the ordering of $V(G)$,
\item for all $h \in H(\C)\setminus (V(K(\C))\cup\{y\})$, the sets $\W{h}{\C}$ and $X_i$ are homogeneous, and
\item for all $h \in H(\C)$, the sets $\{h\}$ and $X_i$ are homogeneous.
\end{enumerate}
\end{definition}

\begin{lemma}
\label{aug}
There is a function $k_{\ref{aug}}:\bZ^3\rightarrow\bZ$ so that, for all positive integers $n$, $m$, $k_0$, and $k_1$ with
$k_1\ge k_{\ref{aug}}(n,m,k_0)$, if $G$ is a graph containing a weak $(n,m,k_1)$-augmentation, then $G$ contains an $(n,m,k_0)$-augmentation.
\end{lemma}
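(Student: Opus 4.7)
The plan is to derive the $(n,m,k_0)$-augmentation from the given weak $(n,m,k_1)$-augmentation $(\C,x,y,X_1,X_2)$ by iteratively restricting the sets using the Ramsey-type results from Section~\ref{sec:Ramsey}. Recall from the paragraph preceding the definition of augmentation that shrinking $X_1$ to a subset $X_1'$ forces a corresponding shrinking of $X_2$ (via $\phi_{X_1\to X_2}$) and of each $\W{h}{\C}$ for $h\in V(K(\C))\cup\{y\}$ (via $\phi_{X_1\to \W{h}{\C}}$), while the sets $\W{h}{\C}$ with $h\in H(\C)\setminus (V(K(\C))\cup\{y\})$ may be refined independently. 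The value $k_{\ref{aug}}(n,m,k_0)$ will be a finite composition of $R_{\ref{orderedRamsey}}$, $R_{\ref{bipRamsey}}$, the two-colour Ramsey function, and some factors of $2$, with $O(n+m)$ applications in total.

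First I would establish condition~(2): for each $h\in V(K(\C))\cup\{y\}$ in turn (skipping $h=x$, since $\W{x}{\C}$ and $X_1$ are already coupled), apply Lemma~\ref{orderedRamsey} to the ordered pair $X_1$ and $\W{h}{\C}$ to extract a subset of $X_1$ whose image in $\W{h}{\C}$ is coupled with or homogeneous to it, and propagate the restriction. When $X_1\ne X_2$, repeat with $X_2$ in place of $X_1$, skipping $h=y$. Next, for condition~(3), and for each $h\in H(\C)\setminus (V(K(\C))\cup\{y\})$, apply Lemma~\ref{bipRamsey} twice, once to $X_1$ and $\W{h}{\C}$ and once to $X_2$ and the refined $\W{h}{\C}$, taking equal-sized homogeneous subsets each time. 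Third, for condition~(1), apply the two-colour Ramsey theorem to the graphs induced on $X_1$ and on $X_2$ to extract clique or coclique subsets. Finally, for condition~(4), I would pigeonhole each of the $n+m$ vertices $h\in H(\C)$ against $X_1$ and against $X_2$, losing at most a factor of $2^{2(n+m)}$ in size.

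The key structural fact that makes the order of operations harmless is that a subset of a coupled pair of ordered sets is coupled (as noted just after the definition of coupled) and a subset of a homogeneous pair is homogeneous; thus any property established in an earlier step survives every subsequent restriction, including those that further shrink $X_1$, $X_2$, or the $\W{h}{\C}$. I do not anticipate a serious obstacle. The only mild bookkeeping point is that, when $X_1\ne X_2$ and one restricts via $X_2$, the propagation to $X_1$ and to the $\W{h}{\C}$'s must go through $\phi_{X_2\to X_1}$; since $X_1$ and $X_2$ are coupled this is well-defined and respects all orderings.
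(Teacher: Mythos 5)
Your proposal is correct and follows essentially the same strategy as the paper: iterate the Ramsey-type lemmas from Section~\ref{sec:Ramsey} (two-colour Ramsey for condition~(1), Lemma~\ref{orderedRamsey} for condition~(2), Lemma~\ref{bipRamsey} for condition~(3), and a pigeonhole/majority argument for condition~(4)), propagating every restriction via $\phi$ along the coupled pairs and relying on the fact that subsets of coupled or homogeneous pairs remain coupled or homogeneous. The only difference is the order in which the four conditions are established — the paper does (1), (2), (3), (4) and you do (2), (3), (1), (4) — but this has no substantive effect; it just gives a differently nested (but equally valid) formula for $k_{\ref{aug}}$.
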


\begin{proof} For positive integers $n$, $m$, and $k_0$ we define
$$k_{\ref{aug}}(n,m,k_0) \coloneqq
 R_2^{(2)}\left(R_{\ref{orderedRamsey}}^{(2n)}\left(R_{\ref{bipRamsey}}^{(2m-2)}\left(k_0 \cdot 2^{2(m+n)}\right)\right) \right).$$
Now consider a weak $(n,m,k_1)$-augmentation $(\C,x,y,X_1,X_2)$ with $k_1 \ge k_{\ref{aug}}(n,m,k_0)$.

By applying Ramsey's Theorem first on $X_1$ and then on the specified subset of $X_2$, we can get statement (1) to hold. Now, for each $i\in\{1,2\}$ and $h\in V(K(\C))\cup \{y\}$ so that $\W{h}{\C}$ and $X_i$ are not already coupled, we successively apply Lemma~\ref{orderedRamsey} to get statement (2) to hold. Note that we apply the lemma at most $2n$ times since $\W{x}{\C}$ and $X_1$ are already coupled, as are $\W{y}{\C}$ and $X_2$. Then, for each $i\in\{1,2\}$ and for each $h\in H(\C)\setminus (V(K(\C))\cup\{y\})$, we successively apply Lemma~\ref{bipRamsey} to  get statement (3) to hold. Finally we get statement (4) to hold by, for each $i\in\{1,2\}$ and each $h\in H(\C)$, successively applying a majority argument to the edges from $h$ to what remains of $X_i$. 
\end{proof}

We can now prove the main result of this section.
\begin{lemma}
\label{strBC}
There is a function $r_{\ref{strBC}}:\bZ^3\rightarrow\bZ$ so that, for all positive integers $n$, $m$, and $k$, every graph of rank-width at least $r_{\ref{strBC}}(n,m,k)$ has a vertex-minor which contains an $(n,m,k)$-constellation, after possibly reordering vertices.
\end{lemma}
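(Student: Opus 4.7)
I would proceed by induction on $n$. For the base case $n=1$, set $r_{\ref{strBC}}(1,m,k) \coloneqq r_{\ref{thm:SF}}(m+1, k)$. Theorem~\ref{thm:SF} then produces a vertex-minor that is a disjoint union of $m+1$ stars each on $k+1$ vertices; taking $H$ to be the set of centers, $W_h$ the $k$ leaves at center $h$, and $K$ the one-vertex subgraph on any chosen center, conditions $(1)$--$(5)$ of the constellation definition are immediate.

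For the inductive step I would choose $r_{\ref{strBC}}(n,m,k)$ large enough so that, passing to a vertex-minor-minimal graph $G$ of rank-width at least $r_{\ref{strBC}}(n,m,k)$, Lemma~\ref{lemma:mfConnected} makes $G$ an $(r_{\ref{strBC}}(n,m,k), g_{\ref{lemma:mfConnected}})$-connected graph, and the inductive hypothesis applied with parameters $(n-1, m+1, K)$ for a sufficiently large $K = K(n,m,k)$ produces, in a vertex-minor of $G$, an $(n-1, m+1, K)$-constellation $\C$. Since $|A(\C)| = (n-1)(K+1)$ and $|B(\C)| = (m+1)(K+1)$, for $K$ large enough both sets exceed $g_{\ref{lemma:mfConnected}}(t)$ for any relevant threshold $t$, so
$$\kappa(A(\C), B(\C)) \ge k_{\ref{weakaug}}(n-1, m+1, k_0)$$
for a large $k_0$ that I would trace back from the bounds below. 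Successive application of Lemma~\ref{weakaug} (to produce a weak augmentation after local complementations and vertex reordering) and Lemma~\ref{aug} (to clean it up) then yields an $(n-1, m+1, k')$-augmentation $(\C, x, y, X_1, X_2)$ with $k'$ still large.

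The final step is to promote this augmentation to an $(n,m,k)$-constellation $\C'$ by annexing $y$ to the connected piece: set $V(K(\C')) = V(K(\C)) \cup \{y\}$ and $E(K(\C')) = E(K(\C)) \cup \{xy\}$, and use pivots and local complementations supported on $X_1 \cup X_2$ to transform the two-step coupling $W_x$--$X_1$ and $X_2$--$W_y$ into a direct coupling between $W_x$ and $W_y$, then delete the spent vertices. In the case $X_1 = X_2$ the common set is coupled with both $W_x$ and $W_y$, so a sequence of pivots along the matching (or half-graph) edges transfers the coupling, after a possible local complementation if $X_1$ is a clique rather than a coclique; when $X_1 \ne X_2$ the additional coupling between $X_1$ and $X_2$ lets us pass the coupling through in two stages. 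Clauses $(2)$--$(4)$ of the augmentation definition (homogeneity of every other $W_h$ and every $\{h\}$ with $X_1$ and $X_2$) guarantee that these operations leave the rest of the constellation untouched, so $\C'$ inherits all its conditions from $\C$; finally, applying the restriction operation $\C' \mid X$ to cut each $W_h$ down to $k$ vertices brings $\C'$ to the required parameters.

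The main obstacle is precisely this conversion step: carrying out the case analysis over the allowable coupling types (coupled matching, up- and down-coupled half graphs, and their complements, with $X_1 = X_2$ or $X_1 \ne X_2$) and verifying that in every case the chosen pivots and local complementations produce one of the allowed coupling types between $W_x$ and $W_y$ without disturbing the anticomplete and coclique relations that still have to hold elsewhere in $\C'$.
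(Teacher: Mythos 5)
Your base case and the early inductive setup --- Theorem~\ref{thm:SF} for $n=1$; passing to a vertex-minor-minimal graph; Lemma~\ref{lemma:mfConnected} together with the sizes of $A(\C)$ and $B(\C)$ to obtain high connectivity; Lemmas~\ref{weakaug} and~\ref{aug} to obtain an $(n-1,m+1,\cdot)$-augmentation --- all match the paper. The gap is in your final step. You propose to annex $y$ to $K(\C)$ and convert the chain $\W{x}{\C}$--$X_1$--$X_2$--$\W{y}{\C}$ into a direct coupling between $\W{x}{\C}$ and $\W{y}{\C}$ by local complementations and pivots on $X_1\cup X_2$. You concede this is unverified, and the justification you do offer is flawed: clauses (2)--(4) of the augmentation definition say the other sets $\W{h}{\C}$ are \emph{homogeneous} with $X_i$, which allows complete, not just anticomplete. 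Locally complementing at a vertex of $X_1$ toggles edges among that vertex's neighbours, so any $\W{h}{\C}$ that is complete to $X_1$ would have its internal adjacencies altered, potentially destroying the coclique property the constellation requires of each $\W{h}{\C}$. These operations do \emph{not} automatically leave the rest of the constellation untouched, so your proposed promotion step does not go through as stated.

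The paper never couples $\W{x}{\C}$ with $\W{y}{\C}$. Instead it chooses the augmentation $(\C_1,z_1,z_2,Z_1,Z_2)$ extremally (preferring $Z_1=Z_2$, and subject to that preferring $z_2$ complete to $Z_2$), deduces from this extremality that some vertex $v\in \W{z_2}{1}\cup\{z_2\}$ has at least $t-1$ neighbours in $Z_1$ or in $Z_2$, makes $N(v)\cap Z_1$ a coclique by one local complementation, and pivots at $vw$ where $w$ is the first such neighbour. The new constellation is obtained by \emph{replacing} $z_2=y$ with the hub $w$, whose leaf set $W^3_w$ is a subset of $Z_1$, and shrinking each remaining $\W{x}{1}$ to $\phi_{Z_1\rightarrow \W{x}{1}}(W^3_w)$; since $Z_1$ is already coupled with $\W{z_1}{1}$, the enlarged graph $K_3$ is connected without transferring any couplings. (The alternative case, where $v$'s high degree sits in $Z_2$, yields a strictly ``better'' augmentation, contradicting the extremal choice.) Your proof needs this replacement mechanism and the extremal choice behind it; the coupling-transfer route you sketch is both unproven and, as justified, unsound.
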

\begin{proof}
For $n=1$, the result is true by Theorem~\ref{thm:SF} with $r_{\ref{strBC}}(1,m,k) \coloneqq r_{\ref{thm:SF}}(m+1,k)$. Now assume that for some fixed integer $n \geq 2$, for all positive integers $m$ and $k$, such a function $r_{\ref{strBC}}(n-1,m,k)$ exists. Now, for fixed $m$ and $k$, we will show that $r_{\ref{strBC}}(n,m,k)$ exists. Define
\begin{align*}
k_1 &\coloneqq k_{\ref{weakaug}}\left(n-1,m+1,k_{\ref{aug}}(n-1,m+1,k+4)\right),\\
k_0 &\coloneqq g_{\ref{lemma:mfConnected}}(k_1), \textrm{ and}\\
r_{\ref{strBC}}(n,m,k) &\coloneqq \max \left( r_{\ref{strBC}}\big(n-1,m+1,k_0\big), k_1 \right).
\end{align*}

Toward a contradiction, suppose that $G$ is a graph with rank-width at least $r_{\ref{strBC}}(n,m,k)$ that does not have a vertex-minor containing an $(n,m,k)$-constellation, after possibly reordering vertices. Choose such a graph with $|V(G)|$ minimum; thus no proper vertex-minor of $G$ has rank-width at least
$r_{\ref{strBC}}(n,m,k)$. So, by Lemma~\ref{lemma:mfConnected}, the graph $G$ is
$(k_1,\, g_{\ref{lemma:mfConnected}})$-connected.

We may assume that $G$ contains an $(n-1, m+1, k_0)$-constellation $\C_0$.
Since $\min(|A(\C_0)|,\, |B(\C_0)|)\ge  g_{\ref{lemma:mfConnected}}(k_1)$, we have
$\kappa(A(\C_0),B(\C_0))\ge k_1$. Then, by Lemmas~\ref{weakaug} and~\ref{aug}, there 
is a graph equivalent to $G$ up to local complementation and reordering vertices
that contains an $(n-1,m+1,k+4)$-augmentation.

We choose a graph $G_1$ that is locally equivalent to $G$ and has an $(n-1,m+1, t)$-augmentation $(\C_1,z_1,z_2,Z_1,Z_2)$ such that:
\begin{enumerate}
\item either
\begin{itemize}
\item $Z_1=Z_2$ and $t=k+2$, or
\item $Z_1\neq Z_2$ and $t = k+4$,
\end{itemize}
\item subject to $(1)$ we have $Z_1=Z_2$ if possible, and
\item subject to $(2)$   the vertex
$z_2$ is complete to $Z_2$ if possible.
\end{enumerate}
We may assume that $G_1=G$.

\begin{claim}
\label{largeDegree}
There is a vertex in $\W{z_2}{1}\cup \{z_2\}$ with at least $t-1$ neighbours in either
$Z_1$ or  $Z_2$.
\end{claim}

\begin{proof}
Suppose otherwise, then, by the assumption,
\begin{itemize}
\item $z_2$ is anticomplete to $Z_1\cup Z_2$,
\item $(\W{z_2}{1},\, Z_2)$ is a coupled matching, and
\item if $Z_1\neq Z_2$, then $\W{z_2}{1}$ is anticomplete to $Z_1$.
\end{itemize}
Note that each vertex in $\W{z_2}{1}$ has degree $2$ in $G[V(\C_1)\cup Z_1\cup Z_2]$.
Let $G'$ be the graph obtained from $G$ by locally complementing on 
each vertex in $\W{z_2}{1}$. Note that $(\C_1,z_1,z_2,Z_1,Z_2)$
is an $(n-1,m+1, t)$-augmentation in $G'$ and $z_2$ is complete to $Z_2$ in $G'$,
contrary to our choice of $G_1$ and $(\C_1,z_1,z_2,Z_1,Z_2)$.
\end{proof}

We break the proof into two cases; there is a lot of overlap in the proofs, but it is less awkward
with the cases separated.

\medskip

\noindent 
{\bf Case 1:} {\em There is a vertex $v\in \W{z_2}{1}\cup \{z_2\}$ with at least $t-1$ neighbours in $Z_1$.}

\medskip

We choose $G_2\in \{G, \, G*v\}$ so that the set of neighbours of $v$ in $Z_1$ is a coclique in $G_2$. Let $w$ be the first vertex in $Z_1$ that is a neighbour, in $G_2$, of $v$, and let  $G_3\coloneqq G_2\times vw$. We will show that $G_3$ contains an $(n,m,k)$-constellation $\C_3$, giving a contradiction.

Let $H_3 \coloneqq (H(\C_1)\setminus\{z_2\})\cup \{w\}$, let $W^3_w$ denote a $k$-element  subset of the neighbours, in $G_2$, of $v$ in $Z_1\setminus \{w\}$, and, for each $x\in H(\C_1)\setminus\{z_2\}$, let $W^3_x \coloneqq \phi_{Z_1\rightarrow \W{x}{1}}(W^3_w)$. Note that since $w$ is the first neighbour of $v$ in $Z_1$, the vertex $w$ is either complete or anticomplete to each $W^3_x$ in $G_2$.

Now let $K_3$ denote the graph obtained from $K(\C_1)$ by adding the vertex $w$ and all edges $wx$ where $x\in V(K(\C_1))$ and $(W^3_w, W^3_x)$ is coupled; since $(W^3_w, W^3_{z_1})$ is coupled, $K_3$ is connected. Finally let $\C_3\coloneqq (H_3,(W^3_x\, : \, x\in H_3),K_3)$. We claim that $\C_3$ is an $(n,m,k)$-constellation in $G_3$ which follows from Lemma~\ref{lemma:pivot} and the following observations about adjacencies in $G_2$:
\begin{itemize}
\item $v$ is anticomplete to $V(\C_1)\setminus (\{z_2\}\cup \W{z_2}{1})$ and is complete to $W^3_w$,
\item for each $x\in H_3\setminus \{w\}$, the vertex $x$ is complete or anticomplete to $W^3_w\cup\{w\}$,
\item for each $x\in H_3\setminus V(K_3)$, the set $W_x^3$ is complete or anticomplete to $W^3_w\cup\{w\}$, and
\item for each $x\in V(K_3)$, the vertex $w$ is complete or anticomplete to $W^3_x$.
\end{itemize}

\medskip

\noindent 
{\bf Case 2:} {\em No vertex in $ \W{z_2}{1}\cup \{z_2\}$ has at least $t-1$ neighbours in $Z_1$.}

\medskip	

Then, by the above claim, there is a vertex $v\in \W{z_2}{1}\cup \{z_2\}$ with $t-1$ neighbours in $Z_2$. Thus $Z_2\neq Z_1$ and, by the definition of an augmentation, $v$ is anticomplete to $Z_1$.

We choose $G_2\in \{G, \, G*v\}$ so that the set of neighbours of $v$ in $Z_2$ is a coclique in $G_2$. Let  $w$ be the first neighbour, in $G_2$, of $v$ in $Z_2$ and let  $G_3\coloneqq G_2\times vw$. We will show that $G_3$ contains an $(n-1,m+1,k+2)$-augmentation $(\C_3,z_1,w,X,X)$ for some $\C_3$ and $X$, giving a contradiction to our choice of $G_1$ and $(\C_1,z_1,z_2,Z_1,Z_2)$.

Let $H_3 \coloneqq (H(\C_1)\setminus\{z_2\})\cup \{w\}$, let $W^3_w$ denote a $(k+2)$-element subset of the set of neighbours of $v$ in $Z_2\setminus \{w\}$, and, for each $x\in H(\C_1)\setminus\{z_2\}$, let $W^3_x \coloneqq \phi_{Z_2\rightarrow \W{x}{1}}(W^3_w)$. By the choice of $w$ to be the first neighbour of $v$ in $Z_2$, the vertex $w$ is either complete or anticomplete to each $W^3_x$ in $G_2$.

Finally let $\C_3\coloneqq (H_3,(W^3_x\, : \, x\in H_3),K(\C_1))$ and let $X \coloneqq \phi_{Z_2\rightarrow Z_1}(W^3_w)$. Again by the choice of $w$, the vertex $w$ is either complete or anticomplete to $X$ in $G_2$. We claim that $(\C_3,z_1,w,X,X)$ is an $(n-1,m+1,k+2)$-augmentation in $G_3$ which follows from Lemma~\ref{lemma:pivot} and the following observations about adjacencies in $G_2$:
\begin{itemize}
\item $v$ is anticomplete to both $V(\C_1)\setminus (\{z_2\}\cup \W{z_2}{1})$ and $X$, and is complete to $W^3_w$,
\item for each $x\in H_3\setminus \{w\}$, the vertex $x$ is complete or anticomplete to $W^3_w\cup\{w\}$,
\item for each $x\in H_3\setminus (V(K(\C_1))\cup \{w\})$, the set $W_x^3$ is complete or anticomplete to $W^3_w\cup\{w\}$, 
\item for each $x\in V(K(\C_1))$, the vertex $w$ is complete or anticomplete to $W^3_x$, and
\item $X$ is complete or anticomplete to $w$.
\end{itemize}
\end{proof}

\section{Extracting a comparability grid}
It remains to prove that every graph containing a sufficiently large constellation has a vertex-minor isomorphic to the $n \times n$ comparability grid. Henceforth we will only consider
$(n,m,k)$-constellations with $m=0$ and will abbreviate these to {\em $(n,k)$-constellations}.
 
We will apply the following well-known Ramsey-type lemma to  reduce to constellations whose 
associated graphs are stars, paths, or cliques.

\begin{lemma}
\label{induced}
There is a function $n_{\ref{induced}}:\bZ\rightarrow \bZ$ such that for every positive integer $k$, every connected graph on at least $n_{\ref{induced}}(k)$ vertices has a $k$-vertex induced subgraph that is either a path, a star, or a clique.
\end{lemma}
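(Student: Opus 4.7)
The plan is to fix a BFS tree of $G$ from an arbitrary root and exploit a size-versus-shape dichotomy: either the tree is deep, which gives an induced path, or some vertex of the tree has many children, which gives an induced clique or star via Ramsey's Theorem.

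First I would pick an arbitrary vertex $r\in V(G)$, fix a BFS tree $T$ rooted at $r$, and let $L_0, L_1,\ldots$ denote the BFS levels. The crucial property is that every edge of $G$ runs between levels whose indices differ by at most one. So if $v_0 = r, v_1, \ldots, v_{k-1}$ is a path in $T$ with $v_i \in L_i$, then no two $v_i$, $v_j$ with $|i-j|\geq 2$ are adjacent in $G$, which means this path is induced in $G$. Consequently, if $T$ has depth at least $k-1$, the path from $r$ to any leaf at depth $k-1$ gives the desired induced path on $k$ vertices.

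Otherwise $T$ has at most $k-1$ levels. An elementary count shows that a rooted tree with at most $k-1$ levels in which every vertex has fewer than $D$ children has fewer than $D^{k-1}$ vertices. So taking $n_{\ref{induced}}(k) \coloneqq (R_2(k-1))^{k-1}$ forces some vertex $v\in V(T)$ to have at least $R_2(k-1)$ children. Let $N$ be this set of children; since $N \subseteq N_G(v)$, applying Ramsey's Theorem to the subgraph $G[N]$ produces either a clique or a coclique of size $k-1$ inside $N$. Appending $v$, which is adjacent in $G$ to every vertex of $N$, yields either an induced clique on $k$ vertices or an induced star on $k$ vertices, as required.

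I do not anticipate any serious obstacle here: the three ``shapes'' in the lemma arise automatically from the BFS dichotomy, and BFS is precisely the structure that rules out chords in the path case. The only thing that needs any care is the elementary count bounding the size of a tree of bounded depth and bounded branching, which is routine.
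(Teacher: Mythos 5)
Your proof is correct, and it is the standard argument for this folklore fact. The paper itself gives no proof, describing the statement only as a ``well-known Ramsey-type lemma,'' so there is nothing in the paper's text to compare against. Your BFS dichotomy is exactly the usual route: the key observation that a BFS tree has no edges between vertices whose levels differ by two or more is what makes the root-to-leaf path induced (a DFS tree would not work here, since back-edges could appear as chords), and the bounded-depth case then forces a high-degree vertex to which Ramsey's Theorem applies, giving the star or the clique. The elementary count and the choice $n_{\ref{induced}}(k) = R_2(k-1)^{k-1}$ both check out (modulo trivial adjustments for $k\leq 2$).
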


The following result gives a sufficient condition for a graph to contain arbitrary $n$-vertex graphs 
as vertex-minors.

\begin{lemma}
\label{sufficientPHG}
Let   $Z=(z_1, z_2, \ldots, z_n)$ be a set of vertices in a graph $G$, with ordering induced by the ordering of $V(G)$, so that there are distinct components $(A_{i,j}\, : \, 1\le i<j\le n)$ of $G-Z$ so that $z_i$ and $z_j$ have neighbours in $A_{i,j}$ and $N(V(A_{i,j}))\subseteq (z_i, z_{i+1}, \ldots, z_{j})$. Then every graph with vertex set $Z$  is a vertex-minor of $G$.
\end{lemma}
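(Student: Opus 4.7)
The plan is to reduce $G$ to any target graph $H$ on vertex set $Z$ by carefully processing the components $A_{i,j}$ one at a time. Let $T = E(H)\triangle E(G[Z])\subseteq \binom{Z}{2}$ be the set of edges we need to toggle. First I would delete, without local complementation, every vertex in $V(G)\setminus Z$ that lies outside $\bigcup_{i<j} V(A_{i,j})$; such deletions do not change any remaining edge.

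The central ingredient is the following \emph{Key Claim}: for each pair $(i,j)$ there exists a sequence $\sigma_{i,j}$ of local complementations at vertices of $V(A_{i,j})$, interleaved with deletions of those vertices, that removes every vertex of $V(A_{i,j})$ and whose net effect on the edges of $Z$ is to toggle a set $E_{i,j}\subseteq \binom{\{z_i,\ldots,z_j\}}{2}$ containing $\{z_i,z_j\}$. To prove the claim I would choose a shortest path $P=(z_i=v_0,v_1,\ldots,v_\ell=z_j)$ in $G[V(A_{i,j})\cup\{z_i,z_j\}]$; such a path exists (because $A_{i,j}$ is connected and each of $z_i,z_j$ has a neighbour there), is induced, and has $\ell\geq 2$. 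I would reduce $P$ by induction on $\ell$: for $\ell=2$, apply local complementation at $v_1$ (this toggles $z_iz_j\in\binom{N(v_1)}{2}$) and delete $v_1$; for $\ell=3$, pivot on $v_1v_2$, noting via Lemma~\ref{lemma:pivot} that $z_i\in N(v_1)\setminus(N(v_2)\cup\{v_2\})$ and $z_j=v_3\in N(v_2)\setminus(N(v_1)\cup\{v_1\})$, so $\{z_i,z_j\}$ lies in the pivot's toggle set; for $\ell\geq 4$ the same pivot instead toggles $z_iv_3$ and, using the induced-path property, produces a new induced path $z_i,v_3,\ldots,v_\ell=z_j$ of length $\ell-2$ on which we recurse. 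Once the path is reduced, delete any remaining vertices of $V(A_{i,j})$ without further local complementation. Because $N(u)\cap Z\subseteq\{z_i,\ldots,z_j\}$ for every $u\in V(A_{i,j})$, every $Z$-edge toggled by $\sigma_{i,j}$ lies in $\binom{\{z_i,\ldots,z_j\}}{2}$.

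I would then observe that sequences for different pairs do not interfere: if $u\in V(A_{i,j})$ and $v\in V(A_{i',j'})$ for distinct pairs, then $u,v$ lie in distinct components of $G-Z$, so $uv\notin E(G)$; hence operations on $V(A_{i,j})$ leave every adjacency incident to $V(A_{i',j'})$ unchanged. It follows that the net $Z$-toggle of each $\sigma_{i,j}$ is a fixed vector, independent of which other sequences have been applied. To realise $T$ I would process the pairs in decreasing order of $j-i$: at each pair $(i,j)$, either apply $\sigma_{i,j}$ or just delete $V(A_{i,j})$ without local complementation, choosing according to whether the $(i,j)$-coordinate of the accumulated toggle needs to flip to match $T$. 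Since $E_{i,j}\subseteq\binom{\{z_i,\ldots,z_j\}}{2}$, the choice at pair $(i,j)$ only affects coordinates $(i',j')$ with $i\leq i'<j'\leq j$, and hence $j'-i'\leq j-i$ with equality only for $(i',j')=(i,j)$; so coordinates set in earlier iterations (for pairs of strictly larger gap) stay correct. After all pairs have been processed the accumulated toggle equals $T$, leaving the graph $H$ on vertex set $Z$.

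The main obstacle will be the pivot step of the Key Claim for $\ell\geq 4$: one has to check carefully, using Lemma~\ref{lemma:pivot}, that the pivot on $v_1v_2$ both toggles the edge $z_iv_3$ (enabling the induction) and preserves the remaining path-edges $v_3v_4,\ldots,v_{\ell-1}v_\ell$ together with the non-edges among $\{z_i,v_3,\ldots,v_\ell\}$ required for the new path to remain induced, despite possible extra neighbours of $v_1,v_2$ inside $V(A_{i,j})$ or in $\{z_{i+1},\ldots,z_{j-1}\}$. This reduces to observing that the induced-path property of $P$ forces $v_4,\ldots,v_\ell\notin N(v_1)\cup N(v_2)$, so none of those vertices participate in the pivot's toggle set.
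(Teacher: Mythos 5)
Your proof is correct, and the core mechanism coincides with the paper's: locally complementing along an induced path between $z_i$ and $z_j$ inside $A_{i,j}$ toggles the edge $z_iz_j$ while disturbing only pairs in $\{z_i,\ldots,z_j\}$, and a suitable processing order on the pairs $(i,j)$ ensures that all $\binom{n}{2}$ target parities can be realised. The differences are organisational. For the per-component toggle, the paper applies $G*v_1*\cdots*v_k$ along the whole induced path at once (implicitly using the folklore fact that this flips $z_iz_j$), whereas you contract the path two vertices at a time via pivots $\times v_1v_2$ and track, via Lemma~\ref{lemma:pivot}, that $z_i$ hops to $v_3$ and the remaining suffix stays an induced path; both are valid since a pivot is itself a sequence of three local complementations. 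For the ordering, you precompute the toggle set $T=E(H)\triangle E(G[Z])$ and process pairs in decreasing order of $j-i$, exploiting that $\sigma_{i',j'}$ can only touch pairs of gap $\leq j'-i'$, with equality only at $(i',j')$ itself. The paper instead runs an adaptive loop using a ``fixed'' predicate on pairs (the rectangle $i'\leq i,\ j'\geq j$ is entirely correct), repeatedly choosing the non-fixed pair with $i$ minimum and then $j$ maximum; the invariant there is that fixing $(i,j)$ never unfixes anything because any pair it could disturb already had the incorrect edge $z_iz_j$ in its dominating rectangle. Your version makes the independence of the per-component toggles (and hence the well-definedness of $E_{i,j}$) more explicit, which is a nice touch; the paper's is slightly shorter since it needs no explicit commutation argument. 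Both are sound.
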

\begin{proof}
Let $H$ be a graph with vertex set $Z$. We say that a pair $(i,j)$, where  $1\leq i <j \leq n$, is {\em fixed} if for each $i'\leq i$ and $j'\geq j$ the vertices $z_{i'}$ and $z_{j'}$ are adjacent in either both of or neither of $H$ and $G$. If all edges are fixed then $H$ is an induced subgraph of $G$. Among all non-fixed pairs choose $(i,j)$ with $i$ minimum and, subject to that, $j$ is maximum. We will fix $(i,j)$, without unfixing any other pair, by locally complementing in $A_{i,j}$; the result 
follows by repeating this until all pairs are fixed.

There is an induced path $P=(v_1,v_2,\ldots,v_k)$ in $A_{i,j}$ such that $z_i$ is adjacent to $v_1$ but not to any of $v_2,\ldots,v_k$ and $z_j$ is adjacent to $v_k$ but not to any of $v_1,\ldots,v_{k-1}$ (if $z_i$ and $z_j$ share a neighbour then it is possible that $k=1$). Replacing $G$ with $G*v_1*v_2*\cdots *v_k$ fixes $(i,j)$ without unfixing any other pair, as required.
\end{proof}

The following two results are applications of Lemma~\ref{sufficientPHG} to constellations.
\begin{lemma}
\label{matchings}
For any $n$-vertex graph $H$, if $\C$ is an $\left(n, \binom{n}{2}\right)$-constellation in a graph $G$ such that $K(\C)$ is either a path or a clique, and for each edge $uv$ of $K(\C)$ the pair $(\W{u}{\C},\W{v}{\C})$ is a coupled matching, then $G$ has a vertex-minor isomorphic to $H$.
\end{lemma}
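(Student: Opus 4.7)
The plan is to apply Lemma~\ref{sufficientPHG} with $Z \coloneqq V(K(\C))$, producing the required components of $G - Z$ from the ``columns'' of the coupled-matching structure of $\C$.

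First I would order $V(K(\C)) = (v_1, \ldots, v_n)$ so that, in the path case, $v_1 v_2 \cdots v_n$ is a Hamilton path of $K(\C)$; in the clique case any order suffices. Writing each $\W{v_p}{\C}$ in induced order as $(w_p^1, \ldots, w_p^k)$ with $k = \binom{n}{2}$, the defining feature of a coupled matching is that, for every edge $v_p v_q$ of $K(\C)$, the matching between $\W{v_p}{\C}$ and $\W{v_q}{\C}$ pairs $w_p^\ell$ with $w_q^\ell$. Hence each \emph{column} $C_\ell \coloneqq \{w_p^\ell : 1 \le p \le n\}$ induces in $G - Z$ a connected subgraph isomorphic to $K(\C)$ (a path or a clique, respectively), and distinct columns are pairwise anticomplete in $G - Z$ because the matchings only connect entries of the same superscript.

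Next I would fix any bijection $\sigma$ from the $\binom{n}{2}$ pairs $(i,j)$ with $1 \le i < j \le n$ to $\{1, \ldots, k\}$, and for each such pair let $A_{i,j}$ be the subgraph induced on $\{w_i^\ell, w_{i+1}^\ell, \ldots, w_j^\ell\}$ where $\ell = \sigma(i,j)$. Deleting every vertex of $G$ outside $Z \cup \bigcup_{(i,j)} V(A_{i,j})$ yields a vertex-minor $G'$ of $G$ in which the $A_{i,j}$ are the distinct connected components of $G' - Z$ (each one being a subpath or subclique of its column), with $N(V(A_{i,j})) = \{v_i, v_{i+1}, \ldots, v_j\}$ and with $v_i, v_j$ both having neighbours in $A_{i,j}$. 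Applying Lemma~\ref{sufficientPHG} then produces every graph on $Z$, and in particular $H$, as a vertex-minor of $G'$ and hence of $G$.

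No step is a serious obstacle. The one subtlety is that Lemma~\ref{sufficientPHG} uses the $V(G)$-induced ordering on $Z$, while my chosen Hamilton-path ordering need not match it; this is handled by tacitly reordering $V(G)$ within $V(K(\C))$ only, which preserves the orderings inside each $\W{v_p}{\C}$ and hence the coupled-matching hypothesis, while leaving the unordered isomorphism conclusion unaffected.
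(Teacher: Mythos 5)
Your proof is correct and follows essentially the same approach as the paper's: identify the $\binom{n}{2}$ ``column'' components of $G - H(\C)$ (each isomorphic to $K(\C)$), assign one to each pair $(i,j)$, and apply Lemma~\ref{sufficientPHG}. The only cosmetic difference is that the paper takes $A_{i,j}$ to be the shortest path in its column between the neighbours of $z_i$ and $z_j$ (which in the clique case is just a single edge), whereas you take the induced subgraph on indices $i$ through $j$; both satisfy the hypotheses of Lemma~\ref{sufficientPHG} equally well.
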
 

\begin{proof}
We may assume that $V(G)=V(\C)$.
Let  $H(\C) =\{z_1,\ldots,z_n\}$ where, if $K(\C)$ is a path, then the vertices are in the order $(z_1,\ldots,z_n)$ on the path. Note that $G-H(\C)$ has $\binom n 2$ components which we label $(G_{i,j}\, : \, 1\le i<j\le n)$; each of these components is isomorphic to $K(\C)$. For each $1\le i<j\le n$, let $A_{i,j}$ denote the (unique) shortest path from the neighbour of $z_i$ in $G_{i,j}$ to the neighbour of $z_j$ in $G_{i,j}$. The result follows by applying Lemma~\ref{sufficientPHG} to the subgraph of $G$ induced on the union of $H(\C)$ together with the sets $(V(A_{i,j})\, : \, 1\le i<j\le n)$.
\end{proof}

\begin{lemma}[Star constellations]
\label{stars}
For any $n$-vertex graph $H$, if $\C$ is an $\left(\binom{n}{2}+1, n+2\right)$-constellation 
in a graph $G$ such that $K(\C)$ is a star,
then $G$ has a vertex-minor isomorphic to $H$.
\end{lemma}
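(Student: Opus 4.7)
The plan is to apply Lemma~\ref{sufficientPHG} to a suitable vertex-minor of $G$ obtained by local complementations, pivots, and vertex deletions; this parallels the proof of Lemma~\ref{matchings}. Let $c$ denote the centre of $K(\C)$, and assign the $\binom{n}{2}$ leaves of $K(\C)$ bijectively to the pairs $(i,j)$ with $1\le i<j\le n$, writing these as $\ell_{ij}$. Take $Z=(z_1,\dots,z_n)$ to be $n$ of the $n+2$ vertices of $W_c$ (ordered as in $V(G)$), and let $u^{*},u^{**}$ be the two unused vertices of $W_c$. For each pair $(i,j)$, the plan is to identify two vertices $x_{ij},y_{ij}\in W_{\ell_{ij}}$ and set $A_{ij}:=\{x_{ij},\ell_{ij},y_{ij}\}$; the aim is that $A_{ij}$ be an induced connected subgraph (typically the two-edge path $x_{ij}\ell_{ij}y_{ij}$), with $x_{ij}$ having $z_i$ as its unique neighbour in $Z$ and $y_{ij}$ having $z_j$ as its unique neighbour in $Z$.

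If $(W_c,W_{\ell_{ij}})$ is a coupled matching, then I simply take $x_{ij}$ and $y_{ij}$ to be the matched partners of $z_i$ and $z_j$ in $W_{\ell_{ij}}$. For a coupled half graph (up, down, or the complement of either), no single vertex of $W_{\ell_{ij}}$ has the required adjacency, so some preprocessing is needed. The key observation is that local complementations and pivots supported on $\{\ell_{ij}\}\cup W_{\ell_{ij}}$ only toggle edges within $\{\ell_{ij}\}\cup W_{\ell_{ij}}\cup W_c$, and in particular preserve the bipartite adjacencies between $W_c$ and every other $W_{\ell_{i'j'}}$; hence the preprocessing can be carried out leaf-by-leaf without cross-interference. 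A sequence of pivots on edges of the form $\ell_{ij}v$ with $v\in W_{\ell_{ij}}$, accompanied by judicious deletions inside $W_{\ell_{ij}}$, allows one to ``peel off'' the two matched pairs $(z_i,x_{ij})$ and $(z_j,y_{ij})$; the $n$ extra vertices of $W_{\ell_{ij}}$ beyond the two we retain provide the slack needed to absorb the side-effects of each pivot (via the toggling set $F$ of Lemma~\ref{lemma:pivot}).

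Once $x_{ij}$ and $y_{ij}$ are chosen for every pair, delete all other vertices of $G$: the centre $c$, the unused $W_c$-vertices $u^{*},u^{**}$, and $W_{\ell_{ij}}\setminus\{x_{ij},y_{ij}\}$ for every pair. Call the resulting vertex-minor $G'$. Each $A_{ij}$ is connected through $\ell_{ij}$, and distinct $A_{ij},A_{i'j'}$ have no edges between them because $H(\C)$ is a coclique in $G$ and the sets $W_{\ell_{ij}},W_{\ell_{i'j'}}$ are anticomplete. Therefore the $A_{ij}$ are exactly the connected components of $G'-Z$, and by construction $N_{G'}(V(A_{ij}))\cap Z=\{z_i,z_j\}\subseteq\{z_i,z_{i+1},\dots,z_j\}$. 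Lemma~\ref{sufficientPHG} applied to $G'$ then produces $H$ as a vertex-minor of $G'$, and hence of $G$.

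The main difficulty is the leaf-by-leaf preprocessing for the non-matching couplings. A naive iterated peeling via pivots $\ell_{ij}v$ introduces, through the toggling set $F$ of Lemma~\ref{lemma:pivot}, spurious edges between previously extracted matched partners and subsequent $W_c$-vertices, and can also create edges from $\ell_{ij}$ to $W_c$. The pivots must therefore be ordered carefully, enough vertices of $W_{\ell_{ij}}$ must be discarded to absorb the side-effects (which is why $|W_{\ell_{ij}}|=n+2$ rather than just $2$), and any stray edges between $\ell_{ij}$ and $Z$ must be confined to the admissible range $\{z_i,z_{i+1},\dots,z_j\}$ (or removed by a final local complementation). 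It is this careful scheduling of the preprocessing operations, and the verification that the resulting $G'$ has the clean component structure demanded by Lemma~\ref{sufficientPHG}, that constitutes the heart of the proof.
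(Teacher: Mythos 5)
Your proposal has a genuine gap precisely at the step you flag as ``the heart of the proof.'' For the coupled-half-graph case you describe a plan to ``peel off'' two matched pairs by a carefully scheduled sequence of pivots on edges $\ell_{ij}v$ together with deletions, and you note that a naive attempt creates spurious edges and needs delicate scheduling --- but you never carry out the scheduling or verify that it terminates with the clean structure you want. As written, the argument is a proof outline with the decisive verification missing, so it does not establish the lemma.

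Moreover, the difficulty is self-inflicted: you are demanding more than Lemma~\ref{sufficientPHG} requires. You insist that each $A_{ij}$ meet $W_c$ in exactly $\{z_i,z_j\}$, which in a half graph genuinely cannot be arranged with a single vertex and is what forces the multi-pivot peeling. But Lemma~\ref{sufficientPHG} only needs $N(V(A_{i,j}))\subseteq\{z_i,z_{i+1},\dots,z_j\}$. The paper exploits this slack. Writing $W_h=(z_0,z_1,\dots,z_{n+1})$ (so the two ``extra'' vertices are the boundary indices $z_0,z_{n+1}$, not absorbers for pivot side-effects), in a down-coupled half graph there is for each $k$ a vertex $x_k\in W_{v_{i,j}}$ with $N(x_k)\cap W_h=\{z_0,\dots,z_k\}$. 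A single pivot $G\times v_{i,j}x_{i-1}$ then toggles exactly the edges between $W_{v_{i,j}}\setminus\{x_{i-1}\}$ and $\{z_0,\dots,z_{i-1}\}$, so that afterwards $N(x_j)\cap\{z_1,\dots,z_n\}=\{z_i,\dots,z_j\}$; one takes $A_{i,j}$ to be the single vertex $x_j$ and deletes the rest of $W_{v_{i,j}}\cup\{v_{i,j}\}$. (The up-coupled case uses $G\times v_{i,j}x_{j+1}$ and the vertex $x_i$.) No iterated peeling, no scheduling, and the ``extra $+2$'' is spent on the indices $i-1=0$ and $j+1=n+1$ at the boundary. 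Your matching case and the overall reduction to Lemma~\ref{sufficientPHG} match the paper; it is the half-graph case that needs to be replaced by this one-pivot argument, or else you must actually supply the peeling schedule you currently only gesture at.
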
  

\begin{proof}
We may assume that $V(G)=V(\C)$. 
Let $H(\C) = \{h\}\cup \{v_{i,j}: \, 1\le i< j\le n\}$, where $h$ is the hub of the star $K(\C)$, and let  $\W{h}{\C} =(z_0,z_1,\ldots,z_n,z_{n+1})$. Note that, for each $1\le i<j\le n$, the graph $G[\W{v_{i,j}}{\C}\cup\{v_{i,j}\}]$ is a component of $G-(\W{h}{\C}\cup\{h\})$. By locally complementing and deleting vertices within the subgraph $G[\W{v_{i,j}}{\C}\cup\{v_{i,j}\}]$ we will obtain a connected graph $A_{i,j}$ such that $z_i$ and $z_j$ have neighbours in $A_{i,j}$ and $N(V(A_{i,j}))\cap\{z_1, \ldots, z_{n}\}\subseteq (z_i, z_{i+1}, \ldots, z_{j})$. Then the result will follow by applying Lemma~\ref{sufficientPHG} to the subgraph induced on the union of $\{z_1, \ldots, z_{n}\}$ and the sets $(V(A_{i,j})\, : \, 1\le i<j\le n)$.

In the case that $( \W{v_{i,j}}{\C},\W{h}{\C})$ is a coupled matching, we take $A_{i,j}$ to be the path in $G[\W{v_{i,j}}{\C}\cup\{v_{i,j}\}]$ connecting the neighbours of $z_i$ and $z_j$. Thus we may assume that $(\W{v_{i,j}}{\C},\W{h}{\C})$ is a coupled half graph. First suppose that $(\W{v_{i,j}}{\C},\W{h}{\C})$ is either a down-coupled half graph or the complement of an up-coupled half graph. Then, for each $k\in\{0,1,\ldots,n\}$, there is a vertex $x_k\in \W{v_{i,j}}{\C}$ whose neighbours in $\W{h}{\C}$ are $\{z_0,\ldots,z_k\}$. Let $G'= G\times v_{i,j}x_{i-1}$. Then, in $G'$, the set of neighbours of $x_j$ in $\{z_1,\ldots,z_n\}$ is $\{z_i,\ldots,z_j\}$, and we take $A_{i,j} = G'[x_j]$.

The final case that $(\W{v_{i,j}}{\C},\W{h}{\C})$ is either an up-coupled half graph or the complement of a down-coupled half graph is similar. In this case, for each $k\in\{1,\ldots,n+1\}$, there is a vertex $x_k\in \W{v_{i,j}}{\C}$ whose set of neighbours in $\W{h}{\C}$ is $\{z_k,\ldots,z_{n+1}\}$. We set $G'= G\times v_{i,j}x_{j+1}$ and then take $A_{i,j} = G'[x_i]$.
\end{proof}

Next we consider constellations whose associated graphs are cliques. In order to recognize comparability grids we use the following easy characterization.
\begin{figure}
\begin{tikzpicture}[scale=1.5, every node/.style={inner sep=0,outer sep=2, fill=none}]
\def \n {3}
\def \h {1.8}
\foreach \i in {1,...,\n} 
	{\foreach \j in {1,...,\n}{
  	\node (A\i\j) at (\h*\i,\h*\j) {};
  	\filldraw (A\i\j) circle (2pt);
}}
\node[label=left:{$(1,1)$}] at (A11) {};
\node[label=left:{$(1,3)$}] at (A13) {};
\node[label=right:{$(3,1)$}] at (A31) {};
\node[label=right:{$(3,3)$}] at (A33) {};
\foreach \i in {1,...,\n}
	{\foreach \l in {1,...,\n} {
	\foreach \j in {\i,...,\n}{
    	\foreach \r in {\l,...,\n}{
  			\draw[thick] (A\i\l) -- (A\j\r);
}}}}
\foreach \i in {1,...,\n}{
    \draw[thick] (A\i1) to [bend right=12] (A\i3);
    \draw[thick] (A1\i) to [bend right=12] (A3\i);
}
\draw[thick] (A11) to [bend right=10] (A33);
\foreach \i in {1,...,\n}
	{\foreach \j in {\i,...,\n} {
	    \draw[ultra thick] (A1\i) -- (A2\j);
}}
\end{tikzpicture}
\caption{The $3 \times 3$ comparability grid.}
\label{fig:comp-grid}
\end{figure}
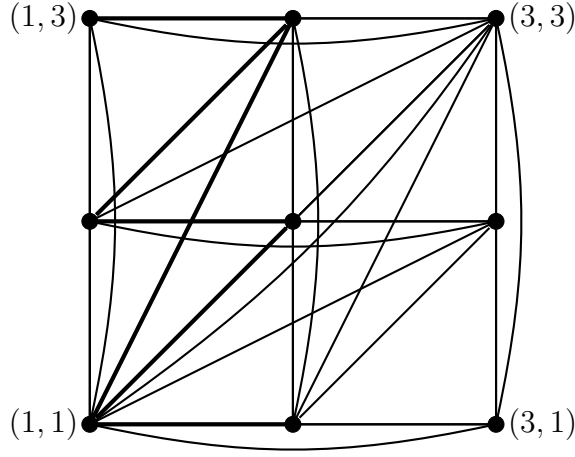

\begin{lemma}
\label{cliques1}
For any positive integer $n$, if $(X_1,\ldots,X_n)$ is a partition of the vertices of a graph $G$ into $n$-vertex cliques, with orderings induced by the ordering of $V(G)$, such that, for each $1\le i<j\le n$, the pair $(X_i,X_j)$ is an up-coupled half graph, then $G$ is isomorphic to the $n\times n$ comparability grid.
\end{lemma}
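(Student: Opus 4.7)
The plan is a direct verification of the isomorphism. Label the vertices of each clique $X_i$ using the ordering induced by $V(G)$, writing $X_i = (x_{i,1}, x_{i,2}, \ldots, x_{i,n})$, and define the map $\phi:V(G)\to \{1,\ldots,n\}^2$ by $\phi(x_{i,j}) = (i,j)$. This is evidently a bijection onto the vertex set of the $n\times n$ comparability grid, so the entire task is to check that $\phi$ preserves adjacency.

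First I would handle the intra-clique edges: for any $i$ and any $j\neq j'$, the vertices $x_{i,j}$ and $x_{i,j'}$ are adjacent in $G$ because $X_i$ is a clique, and $(i,j)$ and $(i,j')$ are adjacent in the comparability grid because they agree in the first coordinate (so one of the two order conditions is trivially satisfied).

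Next I would handle the inter-clique edges. Fix $i<i'$ and consider $x_{i,j}$ and $x_{i',j'}$. By the hypothesis, $(X_i,X_{i'})$ is an up-coupled half graph with respect to the orderings induced from $V(G)$, so by definition
\[
N(x_{i,j})\cap X_{i'} \;=\; (x_{i',j},\,x_{i',j+1},\,\ldots,\,x_{i',n}).
\]
Hence $x_{i,j}x_{i',j'}\in E(G)$ if and only if $j'\ge j$. On the grid side, since $i<i'$, the pair $(i,j)$ and $(i',j')$ is adjacent precisely when $i\le i'$ and $j\le j'$ (the other disjunct $i\ge i'$ and $j\ge j'$ is impossible here), i.e.\ precisely when $j'\ge j$. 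The two conditions agree, completing the verification.

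This lemma is essentially a bookkeeping step, so I do not expect any real obstacle; the only point that requires care is to ensure consistent use of the orderings. In particular, the fact that each $X_i$ is ordered by restricting the ordering of $V(G)$ is exactly what lets the single index $k$ serve simultaneously as the position of $x_{i,k}$ in $X_i$ and as the cut-off index in $N(x_{i,k})\cap X_{i'}$, making the two half-graph structures compatible across all pairs $i<i'$ at once.
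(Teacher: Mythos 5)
Your proof is correct and takes the same approach as the paper: relabel $x_{i,j}\mapsto(i,j)$ and verify adjacency directly. The paper states this in one line without the explicit check, whereas you spell out the intra- and inter-clique verifications, but the underlying argument is identical.
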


\begin{proof}
Recall that the $n\times n$ comparability grid has vertex set $\{(i,j): i,j \in \{1,2,\ldots, n\}\}$ where there is an edge between vertices $(i,j)$ and $(i',j')$ if either $i\leq i'$ and $j\leq j'$, or $i\geq i'$ and $j\geq j'$. Relabel the vertices of $G$ so that, for each $i\in \{1,\ldots,n\}$, we have $X_i = ((i,1),(i,2),\ldots,(i,n))$. Then $G$ is the $n\times n$ comparability grid. See Figure \ref{fig:comp-grid}, where the edges between $X_1$ and $X_2$ are bolded.
\end{proof}

\begin{lemma}
\label{cliques2}
For any positive integer $n$, if $(X_1,\ldots,X_{n^2})$ is a partition of the vertices of a graph $G$ into sets of cardinality $n^2$ such that, for each $1\le i<j\le n$, the pair $(X_i,X_j)$ is either an up-coupled half graph or the complement of a down-coupled half graph, then there is an induced subgraph of $G$ that is isomorphic to the $n\times n$ comparability grid.
\end{lemma}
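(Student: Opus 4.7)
The plan is to exhibit an explicit induced copy of the $n\times n$ comparability grid by picking exactly one vertex from each of the $n^2$ parts. I will index the parts by pairs $(a,b)\in\{1,\dots,n\}^2$ via $i(a,b):=(a-1)n+b$, so that the given order on $\{1,\dots,n^2\}$ corresponds to lexicographic order on the pairs. From the part $X_{i(a,b)}$, which has $n^2$ vertices, I will pick $v_{a,b}$ to be the vertex at position $p(a,b):=n(b-1)+a$ in the ordering on $X_{i(a,b)}$ induced by $V(G)$. Note $p(a,b)\in\{1,\dots,n^2\}$, so this is well-defined, and I will show that the map $v_{a,b}\mapsto(a,b)$ is an isomorphism from the induced subgraph on $\{v_{a,b}:(a,b)\in[n]^2\}$ to the $n\times n$ comparability grid.

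The key observation driving the construction is that both bipartite structures allowed by the hypothesis are ``monotone in positions'': between $X_{i(a,b)}$ and $X_{i(a',b')}$ with $i(a,b)<i(a',b')$, the definitions of up-coupled half graph and complement of down-coupled half graph give $v_{a,b}\sim v_{a',b'}$ iff $p(a,b)\le p(a',b')$ in the first case, and iff $p(a,b)<p(a',b')$ in the second case. These two relations differ only on the diagonal $p=p'$, so if the chosen positions are never tied across distinct pairs, both types yield the same bipartite edge relation on the selected vertices.

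To verify that this common relation realises the comparability grid, I would run a three-case analysis on distinct pairs $(a,b),(a',b')$ with $i(a,b)<i(a',b')$: (i) $a=a'$ and $b<b'$, where $p'-p=n(b'-b)\ge n$; (ii) $a<a'$ and $b\le b'$, where $p'-p=n(b'-b)+(a'-a)\ge 1$; and (iii) $a<a'$ and $b>b'$, where $p'-p\le -n+(n-1)=-1$. In the first two cases $p<p'$, forcing an edge under either hypothesis, which matches the grid (same row, or $(a,b)$ componentwise dominated by $(a',b')$). In the third case $p>p'$, forcing a non-edge, which matches the grid's non-edge between incomparable pairs.

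There is no real obstacle: the proof reduces to a single clean construction plus case-checking. The only genuine design choice is the formula $p(a,b)=n(b-1)+a$; it is engineered to encode the right total order on $(a,b)$, agreeing across columns with the componentwise order demanded by the comparability grid while strictly increasing along each row so as to sidestep the only distinction ($p=p'$) between the two allowed bipartite types.
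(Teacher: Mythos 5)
Your construction is exactly the paper's: your $v_{a,b}$, taken from part $X_{(a-1)n+b}$ at position $(b-1)n+a$, coincides with the paper's $y_{i,j} := x_{(i-1)n+j,\,(j-1)n+i}$, and your observation that the two allowed bipartite types agree once ties in position are ruled out is the key point in both arguments. The only cosmetic difference is that you verify the comparability-grid adjacencies directly by a case analysis on $(a,b)$ versus $(a',b')$, whereas the paper groups the selected vertices into cliques $Y_1,\ldots,Y_n$ and invokes Lemma~\ref{cliques1}.
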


\begin{proof}
Suppose that $X_i=(x_{i,1},\ldots,x_{i,n^2})$ for each $i\in\{1,\ldots,n^2\}$. Now, for each $i,j\in \{1,\ldots,n\}$, let $y_{i,j} \coloneqq x_{(i-1)n+j,(j-1)n+i}$ and let $Y_i=(y_{i,1},\ldots,y_{i,n})$. Thus $Y_1,\ldots, Y_n$ are cliques and, for each $1\le i<j\le n$, the pair $(Y_i,Y_j)$ is an up-coupled half graph, so the result follows from Lemma~\ref{cliques1}.
\end{proof}

\begin{lemma}[Clique constellations]
\label{cliques}
There are functions $n_{\ref{cliques}}:\bZ \rightarrow \bZ$ and $k_{\ref{cliques}}:\bZ \rightarrow \bZ$
such that,
for any positive integer $n$, if $\C$ is an $\left(n_{\ref{cliques}}(n),k_{\ref{cliques}}(n)\right)$-constellation 
in a graph $G$ such that $K(\C)$ is a clique,
then $G$ has a vertex-minor isomorphic to the $n\times n$ comparability grid.
\end{lemma}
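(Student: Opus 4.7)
The strategy is to apply Ramsey's Theorem to the edges of the clique $K(\mathcal{C})$ in order to reduce to a monochromatic sub-constellation in which every pair $(W_u,W_v)$ has the same coupling type $T$, and then to dispatch each of the five possible values of $T$ to one of Lemma~\ref{matchings}, Lemma~\ref{cliques1}, or Lemma~\ref{cliques2}. Concretely, I will take $n_{\ref{cliques}}(n) \coloneqq R_{5}(n^{2})$ and $k_{\ref{cliques}}(n) \coloneqq \binom{n^{2}}{2}$. Each edge $uv$ of $K(\mathcal{C})$ is coloured by which of the five coupling types --- coupled matching, up-coupled half graph, down-coupled half graph, complement of an up-coupled half graph, or complement of a down-coupled half graph --- the pair $(W_u,W_v)$ has. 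Ramsey's Theorem then produces a monochromatic sub-clique $H' \subseteq V(K(\mathcal{C}))$ of size $n^{2}$, and restricting $\mathcal{C}$ to $H'$ yields an $(n^{2},\binom{n^{2}}{2})$-sub-constellation $\mathcal{C}'$ whose pairs are all of a single type $T$.

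If $T$ is a coupled matching, then Lemma~\ref{matchings} applied to $\mathcal{C}'$ with the $n \times n$ comparability grid as target delivers the desired vertex-minor directly (note that the grid has exactly $n^{2}$ vertices and $\binom{n^{2}}{2}$ matches our $k$). Otherwise, I will first delete every vertex of $G$ outside $V(\mathcal{C}')$, so that $N(h) = W_h$ for each $h \in H'$, and then locally complement at every $h \in H'$. Since LC at $h$ only flips edges inside $N(h) = W_h$, each such operation turns the coclique $W_h$ into a clique but leaves every pair type $(W_u,W_v)$ with $u,v \ne h$ completely intact; the final graph is therefore well-defined independently of the order in which the $n^{2}$ local complementations are carried out. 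After also deleting $H'$ itself and trimming each $W_h$ to the size required by the target lemma, I obtain a vertex-minor of $G$ whose vertex set is partitioned into cliques all pairwise coupled in type $T$. When $T$ is an up-coupled half graph, Lemma~\ref{cliques1} applied with $n$ cliques of size $n$ shows that the resulting graph is isomorphic to the $n \times n$ comparability grid. When $T$ is the complement of a down-coupled half graph, Lemma~\ref{cliques2} applied with $n^{2}$ parts of size $n^{2}$ yields an induced $n \times n$ comparability grid.

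The two remaining cases --- $T$ a down-coupled half graph or $T$ the complement of an up-coupled half graph --- reduce to the two just handled by a symmetry argument. Relabelling the vertices of $V(G)$ in reverse order uniformly converts every down-coupled pair into an up-coupled pair and every complement of an up-coupled pair into a complement of a down-coupled pair, while preserving the constellation structure and the clique structure of the $W_h$'s obtained after the local complementations above. Since Lemmas~\ref{cliques1} and~\ref{cliques2} conclude with an isomorphism to the (unordered) comparability grid, their conclusions do not depend on the chosen ordering of $V(G)$, so this reversal reduces each of these cases to the ones already treated.

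The main step that will need explicit verification in the full write-up is the simultaneous harmlessness of the $|H'|$ local complementations: that performing LC at each $h \in H'$ really does not disturb any pair $(W_u, W_v)$ with $u,v \ne h$. This is immediate once $V(G)$ has been restricted to $V(\mathcal{C}')$, since then $N(h) \cap V(\mathcal{C}') = W_h$ and LC at $h$ affects only edges inside $W_h$; the rest of the proof is routine case-checking and parameter bookkeeping.
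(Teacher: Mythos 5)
Your proposal is correct and follows essentially the same strategy as the paper: apply Ramsey's Theorem to the edge-coloured clique $K(\C)$ to force all pairs $(W_u,W_v)$ into a uniform coupling type, then invoke Lemmas~\ref{matchings} and~\ref{cliques2} (and, in your version, also~\ref{cliques1}). The paper gets away with a $3$-colouring rather than your $5$-colouring by merging ``up-coupled half graph'' with ``complement of a down-coupled half graph'' (and the two remaining half-graph types) into single colours, since Lemma~\ref{cliques2} is stated to tolerate exactly that mix; this also lets the paper skip your local-complementation step entirely, as Lemma~\ref{cliques2} does not require its parts to be cliques. Your extra step of locally complementing at each $h \in H'$ (after restricting to $V(\C')$) is sound for the reason you give, and is needed only for your use of Lemma~\ref{cliques1}. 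One tiny parameter remark: take $k_{\ref{cliques}}(n) \coloneqq \max\bigl(n^2,\binom{n^2}{2}\bigr)$ as in the paper, since $\binom{n^2}{2}$ is not adequate when $n=1$ (that case is trivial anyway).
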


\begin{proof}
Recall that the function $R_k$ is defined in Ramsey's Theorem.
For a positive integer $n$ we define
\begin{eqnarray*}
n_{\ref{cliques}}(n)&\coloneqq& R_3(n^2) \mbox{ and}\\
k_{\ref{cliques}}(n) &\coloneqq& \max\left(n^2,\binom{n^2}{2}\right).
\end{eqnarray*}
Let $\C$ be an $\left(n_{\ref{cliques}}(n),k_{\ref{cliques}}(n)\right)$-constellation 
in a graph $G$ such that $K(\C)$ is a clique and let
$H(\C)=(h_1,\ldots,h_{n_1})$, where $n_1 = n_{\ref{cliques}}(n)$.
Toward a contradiction we assume that  no vertex-minor of $G$ is isomorphic to the $n\times n$ comparability grid. 

By Ramsey's Theorem, there is a subsequence $(v_1,v_2,\ldots,v_{n^2})$ of
$(h_1,h_2,\ldots, h_{n_1})$ such that one of the following holds:
\begin{itemize}
\item[$(i)$] For each $1\le i<j\le n^2$, the pair $(\W{v_i}{\C},\W{v_j}{\C})$ is a coupled matching.
\item[$(ii)$] For each $1\le i<j\le n^2$, the pair $(\W{v_i}{\C},\W{v_j}{\C})$ is either an up-coupled half graph
or the complement of a down-coupled half graph.
\item[$(iii)$] For each $1\le i<j\le n^2$, the pair $(\W{v_i}{\C},\W{v_j}{\C})$ is either a down-coupled half graph or
the complement of an up-coupled half graph.
\end{itemize}
By possibly reversing the order of the sequence $(v_1,v_2,\ldots,v_{n^2})$ we may assume that
we are not in case $(iii)$. However,
Lemma~\ref{matchings} precludes case $(i)$ and Lemma~\ref{cliques2}
precludes case $(ii)$.
\end{proof}

It remains to consider constellations whose associated graphs are paths.
We say that a graph is an {\em ordered path} if the graph is a path and the 
order of the vertices on the path agrees with the ordering of the vertices of the graph; thus every path is 
isomorphic to an ordered path.
\begin{lemma}[Path constellations]
\label{paths}
There are functions $n_{\ref{paths}}:\bZ \rightarrow \bZ$ and $k_{\ref{paths}}:\bZ \rightarrow \bZ$
such that,
for any positive integer $n$, if $\C$ is an $\left(n_{\ref{paths}}(n),k_{\ref{paths}}(n)\right)$-constellation 
in a graph $G$ such that $K(\C)$ is a path,
then $G$ has a vertex-minor isomorphic to the $n\times n$ comparability grid.
\end{lemma}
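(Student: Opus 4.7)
The plan is to reduce to the setting of Lemma~\ref{matchings} applied with $H$ equal to the $n\times n$ comparability grid: since that grid has $n^2$ vertices, it suffices to find, as a vertex-minor of $G$, an $(n^2,\binom{n^2}{2})$-constellation whose associated graph is a path and whose every edge corresponds to a coupled matching. Accordingly I would set $n_{\ref{paths}}(n)$ and $k_{\ref{paths}}(n)$ large enough to absorb all the Ramsey and reduction losses incurred below while still meeting these thresholds.

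The first step is to pass to a sub-constellation whose associated graph is a contiguous sub-path of $K(\C)$ on which every edge is of the same coupling type --- one of the coupled matching, the up- or down-coupled half graphs, or one of their complements. I would do this by Ramsey-style arguments applied to the sequence of coupling types along the path, and I would use an auxiliary Erd\H os--Szekeres-type argument on the positions of the hubs in $V(G)$ to ensure that the resulting sub-path is also \emph{ordered}, so that the distinction between ``up-coupled'' and ``down-coupled'' is unambiguous. A subtle point here is that simple pigeonhole does not directly produce long monochromatic \emph{contiguous} runs in a coloured sequence, so the argument requires the parameters to be quite generous, for example by iterating Ramsey on consecutive blocks of edges of the path.

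If the common coupling type is a coupled matching we are immediately done by Lemma~\ref{matchings}. Otherwise, on each edge $(W_u,W_v)$ of the sub-path the coupling is a half graph, and I would convert it to a coupled matching by a sequence of pivots or local complementations within $W_u\cup W_v\cup\{u,v\}$: a coupled half graph has a staircase structure, and by Lemma~\ref{lemma:pivot} iterated pivots along successive ``rungs'' strip it down to a matching while affecting adjacencies only inside that controlled region. The main obstacle I anticipate is that such pivots on one edge of the path can perturb the coupling on the two adjacent edges; to handle this I would interleave each normalization with a further Ramsey-type thinning of the sub-path, paying for the extra structure in the constants $n_{\ref{paths}}(n)$ and $k_{\ref{paths}}(n)$ but leaving a coupled-matching sub-path long enough to invoke Lemma~\ref{matchings}.
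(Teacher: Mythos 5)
Your overall direction is reversed from the paper's, and the key normalization step you propose does not work. You want to keep the coupled-matching edges of $K(\C)$ and ``pivot away'' the half-graph edges so that Lemma~\ref{matchings} applies; the paper does the opposite, discarding the matching edges (by locally complementing on the $W$-sets at matching positions, which chains the adjacent half graphs together) and keeping the half-graph edges, then progressively locally complementing along intermediate $W$-sets to ``transitively close'' the path into a clique whose pairs are all up-coupled half graphs, at which point Lemma~\ref{cliques2} produces the comparability grid directly. This is not just a stylistic choice: the comparability grid \emph{is} a clique of up-coupled half graphs (Lemma~\ref{cliques1}), so half-graph couplings are the natural thing to accumulate, not eliminate.

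The concrete gap is your claim that a coupled half graph $(W_u,W_v)$ can be converted to a coupled matching by pivots or local complementations ``affecting adjacencies only inside that controlled region.'' This is false while the constellation structure must be preserved. If $x\in W_u$ and $y\in W_v$ with $xy\in E(G)$, then by Lemma~\ref{lemma:pivot} the pivot $G\times xy$ toggles, for instance, every edge between $u$ (which lies in $N(x)\setminus(N(y)\cup\{y\})$) and $N(y)\cap W_u$ (which lies in $N(y)\setminus(N(x)\cup\{x\})$), so $u$ ceases to be complete to $W_u$; it also toggles the $uv$ edge, destroying the coclique on the hubs. Likewise, locally complementing at $x\in W_u$ complements $N(x)\supseteq\{u\}\cup(N(x)\cap W_v)$, making $u$ adjacent to part of $W_v$. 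No ``Ramsey thinning'' of the sub-path can repair this, since the damage is to the hub--petal structure of the very petal pair you are normalizing, not to neighbouring pairs. Note also that no induced sub-pair of a coupled half graph of size $\ge 2$ is a coupled matching, so simple restriction cannot help either. Finally, your plan to extract a long \emph{contiguous} monochromatic sub-path via Ramsey is, as you yourself flag, not something Ramsey provides; the paper sidesteps this by the local-complementation contraction just described rather than by any contiguity argument.
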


\begin{proof}For a positive integer $n$ we define
\begin{eqnarray*}
m&\coloneqq& n^2, \\
k_3&\coloneqq& m\cdot 2^{m-1}, \\
k_2&\coloneqq& k_3+m-1, \\
k_1&\coloneqq& k_2+m-1, \\
n_{\ref{paths}}(n)&\coloneqq& (n^2-1)m, \mbox{ and}\\
k_{\ref{paths}}(n)&\coloneqq& \max\left(k_1, \binom{n^2}{2}\right).
\end{eqnarray*}
For convenience we also define $n_0 \coloneqq n_{\ref{paths}}(n)$ and $k_0 \coloneqq k_{\ref{paths}}(n)$. Let $\C$ be an $(n_0,k_0)$-constellation in a graph $G$ such that $K(\C)$ is an ordered path on vertices
$(h_1,\ldots,h_{n_0})$.
Toward a contradiction we may assume that  no vertex-minor of $G$ is isomorphic to the $n\times n$ comparability grid. 
\begin{claim} There is a graph $G_1$ that is locally equivalent to $G$ and has an $(m,k_1)$-constellation $\C_1$ 
such that $K(\C_1)$ is an ordered path with vertices
$(v_1,\ldots,v_{m})$ and, for each $i\in \{1,\ldots,m-1\}$ the pair $(\W{v_i}{1},\W{v_{i+1}}{1})$ is a coupled half graph.
\end{claim}

\begin{proof}
Let  $X$ denote the set of all $i\in\{1,\ldots,n_0-1\}$ such that $(\W{h_i}{\C},\W{h_{i+1}}{\C})$ is a coupled matching. Let $(v_1,\ldots,v_{t})$ be the restriction of the sequence $(h_1,\ldots,h_{n_0})$ to the elements $\{h_j\, : \, j\in \{1,\ldots,n_0\}\setminus X\}$. By Lemma~\ref{matchings}, the set $X$ cannot contain $n^2-1$ consecutive integers and hence $t\ge m$. Let $H_1 \coloneqq \{v_1,\ldots,v_{m}\}$, let $P_1$ be the ordered path on $(v_1,\ldots,v_{m})$, let $\C_1\coloneqq (H_1,(\W{v_1}{\C},\ldots,\W{v_{m}}{\C}),P_1)$, and let $G_1$ be the graph obtained from $G$ by locally complementing on each of the vertices in $(\W{h_i}{\C}\, : \, i\in X)$. It is routine to verify that the pair $(G_1,\C_1)$ satisfies the conclusion of the claim.
\end{proof}

Suppose that $A=(a_1,\ldots,a_l,a_{l+1})$ and $B=(b_1,\ldots,b_l,b_{l+1})$ are disjoint sets in a graph and $(A,B)$ is a coupled half graph. If $(A,B)$ is the complement of a down-coupled half graph then $((a_1,\ldots,a_l),(b_2,\ldots,b_{l+1}))$ is an up-coupled half graph, while, if $(A,B)$ is a down-coupled half graph, then $((a_1,\ldots,a_l),(b_2,\ldots,b_{l+1}))$ is the complement of an up-coupled half graph. Starting with the first elements of $\W{v_1}{1}$ and then choosing elements appropriately from each of $\W{v_2}{1},\ldots,\W{v_m}{1}$ in turn we obtain the following result.
\begin{claim}
\label{clm:upHalf} There is an $(m,k_2)$-constellation $\C_2$ in $G_1$
such that $K(\C_2)$ is an ordered path on vertices $(v_1, \ldots, v_m)$ and, for each $i\in \{1,\ldots,m-1\}$ the pair
$(\W{v_i}{2},\W{v_{i+1}}{2})$ is an up-coupled half graph or the complement of an up-coupled half graph.
\end{claim}

By pivoting we can further reduce to the case where all pairs are up-coupled half graphs.
\begin{claim}
There is a graph $G_3$ that is obtained from $G_1$ by pivoting and has an $(m,k_3)$-constellation $\C_3$ such that $K(\C_3)$ is an ordered path on vertices $(u_1, u_2, \ldots, u_m)$ and, for each $i\in \{1,\ldots,m-1\}$ the pair $(\W{u_i}{3},\W{u_{i+1}}{3})$ is an up-coupled half graph.
\end{claim}

\begin{proof}
We will prove by induction on $m-t$, where $1 \leq t\leq m$, that if a graph $G$ contains an $(m, k_3+m-t)$-constellation $\C_2$ such that $K(\C_2)$ is an ordered path on vertices $(w_1, w_2, \ldots, w_m)$ and for each $i \in \{1, \ldots, t-1\}$, the pair $(\W{w_i}{2},\W{w_{i+1}}{2})$ is an up-coupled half graph, and for each $i \in \{t, \ldots, m-1\}$, the pair $(\W{w_i}{2},\W{w_{i+1}}{2})$ is an up-coupled half graph or the complement of an up-coupled half graph, then there is a graph $G_3$ that is obtained from $G$ by pivoting and has an $(m,k_3)$-constellation $\C_3$ as in the claim. The case where $t=1$ implies the claim since $k_2 = k_3+m-1$. The base case where $t=m$ holds by deleting excess vertices from each set $\W{w_i}{2}$.

Now we may assume that $t<m$. We may also assume that the pair $(\W{w_t}{2},\W{w_{t+1}}{2})$ is the complement of an up-coupled half graph, as otherwise we may delete one vertex from each set $\W{w_i}{2}$ and apply induction. Let $w$ be the first vertex in $W_{w_{t+1}}^2$. Let $G_3 = G \times ww_{t+1}$, let $H_3 = (H(\C_2)\setminus \{w_{t+1}\})\cup \{w\}$, and let $K_3$ be the graph obtained from $K(\C_2)$ by relabeling $w_{t+1}$ to $w$. Let $W_w^3$ be the set obtained from $W_{w_{t+1}}^{2}$ by deleting $w$, and, for each $h \in H(\C_2)\setminus \{w_{t+1}\}$, let $W_h^3$ be the set obtained from $W_h^{2}$ by deleting its first vertex. Finally, let $\C_3 = (H_3, (W_h^3:h \in H_3), K_3)$.

Consider the neighbours of $w$ and $w_{t+1}$ in $G[V(\C_3)\cup \{w_{t+1}\}]$. The neighbourhood of $w_{t+1}$ is exactly $W_{w_{t+1}}^2$. The vertex $w$ is complete to $W_{w_t}^3$ and either complete or anticomplete to $W_{w_{t+2}}^3$, if $t+2\le m$. These are the only neighbours of $w$ other than $w_{t+1}$. Thus $\C_3$ is an $(m, k_3+m-t-1)$-constellation in $G_3$ so that all pairs are coupled in the same way as in $G$, except for $(W_{w_t}^3, W_w^3)$, which is an up-coupled half graph in $G_3$, and $(W_{w}^3, W_{w_{t+2}}^3)$, which may be complemented. The claim follows by the induction hypothesis.
\end{proof}

For each $s\in\{1,\ldots,m\}$, we let $L_s$ denote the graph with vertex set $\{u_1,\ldots,u_{m}\}$ and edge set 
$$\{u_iu_j\, : \, 1\le i<j\le s\}\cup\{u_{s}u_{s+1},
u_{s+1}u_{s+2},\ldots,u_{m-1}u_{m}\}.$$
Thus $L_1$ is a path and $L_{m}$ is a complete graph. For each $s\in \{1,\ldots,m\}$ we let $d_s\coloneqq m 2^{m-s}$; thus $d_1=k_3$ and $d_m=m=n^2$.

\begin{claim} For each $s\in\{1,\ldots,m\}$, there is a graph $G'_s$ that is locally equivalent to $G$ and has disjoint $d_s$-vertex cocliques $(X^s_1,\ldots,X^s_m)$ such that
\begin{itemize}
\item[$(i)$] for each $i\in\{1,\ldots, m\}$, $X^s_{i}\subseteq W^3_{u_i}$,
\item[$(ii)$] for $1\le i<j\le m$, the pair $(X^s_i,X^s_j)$ is an up-coupled half graph if $u_iu_j\in E(L_s)$ 
and is anticomplete otherwise, and
\item[$(iii)$] for each $i\in\{s+1,\ldots, m\}$, the vertex $u_i$ is complete to $X^s_i$ and
anticomplete to each of $X^s_1,\ldots,X^s_{i-1}$ and to each $X^s_{i+1}\cup \{u_{i+1}\},\ldots,X^s_m \cup \{u_m\}$.
\end{itemize}
\end{claim}

\begin{proof}
The proof is by induction on $s$; when $s=1$ the conclusion is satisfied by $G'_1 \coloneqq G_3$ and $X^1_i = \W{u_i}{3}$ for each $i\in\{1,\ldots,m\}$. For some $s\in\{2,\ldots,m\}$ suppose that there exist $G'_{s-1}$ and $(X^{s-1}_1, \ldots,X^{s-1}_m)$ as claimed; we will determine $G'_s$ and $(X^s_1,\ldots,X^s_m)$.

We let $G'_s$ be the graph obtained from $G'_{s-1}$ by locally complementing on each vertex in $X_{s-1}^{s-1}\cup \{u_s\}$. Suppose that, for each $i\in \{1,\ldots,m\}$, we have $X^{s-1}_i =(x^i_1,\ldots,x^i_{d_{s-1}})$, and let $X^s_i \coloneqq (x^i_1,x^i_3,\ldots, x^i_{(2d_s)-1})$. We claim that $G'_s$ and $(X^s_1,\ldots,X^s_m)$ satisfy the result; this follows from the following observations about adjacencies in $G'_{s-1}$:
\begin{itemize}
\item for each $i,j\in\{1,3,\ldots,d_{s-1}-1\}$ and each $a,b\in\{1,\ldots,s-2\}$, the vertices
$x^a_i$ and $x^b_j$ have an even number of common neighbours in $X^{s-1}_{s-1}\cup\{u_s\}$,
\item for each $i,j\in\{1,3,\ldots,d_{s-1}-1\}$ and each $a,b\in\{s,s+1,\ldots,m\}$, the vertices
$x^a_i$ and $x^b_j$ have an even number of common neighbours in $X^{s-1}_{s-1}\cup\{u_s\}$, and
\item for each $i,j\in\{1,3,\ldots,d_{s-1}-1\}$ and each $a\in\{1,\ldots,s-2\}$, the vertices
$x^a_i$ and $x^s_j$ have an odd number of common neighbours in $X^{s-1}_{s-1}\cup\{u_s\}$ if and only if $j\ge i$.
\end{itemize}
\end{proof}

We obtain the final contradiction to Lemma~\ref{paths} by applying Lemma~\ref{cliques2} to $G'_{m}$ and
$(X^m_1,\ldots,X^m_m)$.
\end{proof}

We can now combine the above results to prove our main result, Theorem~\ref{mainGrid},
which we restate here for convenience.
\mainG*

\begin{proof}
For a positive integer $n$ we define
\begin{eqnarray*}
k_1&\coloneqq& \max\left(n^2+2, k_{\ref{cliques}}(n), k_{\ref{paths}}(n) \right),\\
n_1&\coloneqq& \max\left(\binom{n^2}{2}+1, n_{\ref{cliques}}(n), n_{\ref{paths}}(n) \right), \mbox{ and}\\
f(n) &\coloneqq& r_{\ref{strBC}}(n_{\ref{induced}}(n_1),0,k_1).
\end{eqnarray*}
Let $G$ be a graph with rank-width at least $f(n)$.
By Lemmas~\ref{strBC} and~\ref{induced}, there is a graph $G_1$, equivalent to
$G$ up to local complementation and reordering vertices, that contains
an $(n_1,k_1)$-constellation $\C$ such that $K(\C)$ is either a star, a clique, or a path.
Now the result follows by Lemmas~\ref{stars},~\ref{cliques}, and~\ref{paths}.
\end{proof}

\section*{Acknowledgement}
We would like to thank the anonymous referees for carefully reading the paper and suggesting a number of helpful clarifications and corrections.
 
\bibliographystyle{plain}

\end{document}